\newcounter{theorem}
\newtheorem{theorem}[theorem]{Theorem}
\newtheorem{lemma}[theorem]{Lemma}
\newtheorem{prop}[theorem]{Proposition}
\newtheorem{cor}[theorem]{Corollary}
\theoremstyle{definition}
\newtheorem{defn}[theorem]{Definition}
\theoremstyle{remark}
\newtheorem*{remark*}{Remark}
\newtheorem{rmk}[theorem]{Remark}
\numberwithin{equation}{section}
\renewcommand{\labelenumi}{(\roman{enumi})}
\newcommand{\C}{\mathrm{C}^*}
\newcommand{\Z}{\mathcal{Z}}
\newcommand{\id}{\mathrm{id}}
\newcommand{\h}{\mathrm{ht}}
\newcommand{\ha}{\mathrm{hat}}
\newcommand{\Aut}{\mathrm{Aut}}
\newcommand{\End}{\mathrm{End}}
\newcommand{\Ad}{\mathrm{Ad}}
\newcommand{\dr}{\mathrm{dr}}
\newcommand{\rc}{\mathrm{rc}}
\newcommand{\rqd}{\mathrm{rqd}}
\newcommand{\hqd}{\mathrm{hqd}}
\newcommand{\s}{\mathrm{sc}}
\newcommand{\rk}{\mathrm{rank}}
\newcommand{\nuc}{\mathrm{nuc}}
\newcommand{\tp}{\mathrm{top}}
\newcommand{\rcp}{\mathrm{rcp}}
\newcommand{\Aff}{\mathrm{Aff}}
\newcommand{\diam}{\mathrm{diam}}
\newcommand{\home}{\mathrm{Homeo}}
\newcommand{\dimnuc}{\dim_{\mathrm{nuc}}}
\newcommand{\eps}{\varepsilon}
\newcommand{\cF}{\mathcal{F}}
\newcommand{\cE}{\mathcal{E}}
\newcommand{\cG}{\mathcal{G}}
\newcommand{\cH}{\mathcal{H}}
\newcommand{\cU}{\mathcal{U}}
\newcommand{\cV}{\mathcal{V}}
\newcommand{\cP}{\mathcal{P}}
\newcommand{\cB}{\mathcal{B}}
\newcommand{\cO}{\mathcal{O}}
\newcommand{\N}{\mathbb{N}}
\newcommand{\zz}{\mathbb{Z}}
\newcommand{\rr}{\mathbb{R}}
\newcommand{\cc}{\mathbb{C}}
\DeclareMathOperator{\Span}{\mathrm{span}}
\newcommand{\nn}[1]{\textcolor{red}{[[#1]]}}
\newcommand{\bj}[1]{\textcolor{teal}{[[#1]]}}
\newcommand{\rn}[1]{\textcolor{olive}{[[#1]]}}
\title{Brown--Voiculescu entropy revisited}
\author[B.\ Jacelon]{Bhishan Jacelon}
\address{Institute of Mathematics of the Czech Academy of Sciences, Žitná 25, 115 67 Prague 1, Czech Republic}
\email{jacelon@math.cas.cz}
\author[R.\ Neagu]{Robert Neagu}
\address{Department of mathematics, KU Leuven, Celestijnenlaan 200B, 3001, Leuven, Belgium}
\email{robert.neagu@kuleuven.be}
\thanks{RN funded by the European Union. Views and opinions expressed are however those of the authors only and do not necessarily reflect those of the European Union or the European Research Council. Neither the EU nor the ERC can be held responsible for them.}
\begin{document}

\begin{abstract}
 Aided by the tools and outlook provided by modern classification theory, we take a new look at the Brown--Voiculescu entropy of endomorphisms of nuclear $\C$-algebras. In particular, we introduce `coloured' versions of noncommutative topological entropy suitable for $\C$-algebras $A$ of finite nuclear dimension or finite decomposition rank. In the latter case, assuming further that $A$ is simple, separable, unital, satisfies the UCT and has finitely many extremal traces, we prove a variational type principle in terms of quasidiagonal approximations relative to this finite set of traces. Building on work of Kerr, we also show that infinite entropy occurs generically among endomorphisms and automorphisms of certain classifiable $\C$-algebras that function as noncommutative spaces of observables of topological manifolds.
\end{abstract}

\maketitle

\numberwithin{theorem}{section}	

\section*{Introduction}
\renewcommand*{\thetheorem}{\Alph{theorem}}

In its various guises, whether measure theoretic \cite{Kolmogorov58,Sinai59} or topological \cite{AdlerKonheimMcAndrew,Bowen71}, entropy is a numerical invariant that can be used to distinguish isomorphism classes of dynamical systems. In certain cases, it is even a classifying invariant (see, for example, \cite{Ornstein70}). Topological entropy, which is the focus of this article, measures the exponential growth rate of distinguishable orbit segments $(x,Tx,\dots,T^{n-1}x)$ of a topological dynamical system $(X,T)$. This point of view, in contrast to earlier operator algebraic treatments \cite{ConnesStormer,ConnesNarnhoferThirring}, motivated Voiculescu's definition \cite{Voiculescu95} of noncommutative topological entropy for automorphisms of nuclear $\C$-algebras and subsequently Brown's extension \cite{Brown99} to endomorphisms of exact $\C$-algebras.

The $\C$-algebras of primary interest to us are those that are simple, separable, unital, nuclear, tensorially absorb the Jiang--Su algebra $\Z$ and satisfy the universal coefficient theorem (UCT). These $\C$-algebras are sometimes called `classifiable' as they are now known to be classified up to isomorphism by an invariant consisting of $K$-theoretic and tracial data (see \cite{CGSTW23}, but also note that historical progress in the Elliott programme was incremental, indeed the work of many mathematicians over many years). It is in this setting that we aim to investigate the behaviour of noncommutative entropy of endomorphisms.


An important tool in the development of the classification theorem was noncommutative dimension theory, specifically the decomposition rank $\dr$ and nuclear dimension $\dimnuc$ developed respectively in \cite{KirchbergWinterDecompRank} and \cite{WinterZachNucDim}. Based on (and extending) the Lebesgue covering dimension of topological spaces, these notions of dimension quantify the ability of nuclear $\C$-algebras to admit completely positive (cp) approximation systems that can be decomposed into finitely many colours corresponding to orthogonality-preserving cp maps (see Definition~\ref{defn: NucDim}). Finite nuclear dimension played a crucial role in the first complete proof of the classification theorem (see \cite[Corollary D]{TikuisisWhiteWinter17} and \cite[Theorem 4.10]{EGLN16}), even though the later presentation \cite{CGSTW23} instead prioritises $\Z$-stability. Of course, it is now well known that these two properties are equivalent for nonelementary, simple, separable, nuclear $\C$-algebras (see \cite{Winter12,Tikuisis12,CPOU21,CastillejosEvington20}). Moreover, finite nuclear dimension or decomposition rank is sometimes easier to verify in practice, for example for $\C$-algebras arising as inductive limits of subhomogeneous building blocks.

Since the growth rates captured by the Brown--Voiculescu entropy $\h$ are exactly measured by cp approximations (see Definition~\ref{defn: BrownVoiculescuEntropy}), it seems natural to wonder whether this decomposability can be incorporated into the measurement, consequently giving rise to coloured versions of noncommutative topological entropy. This is exactly what we do in Definition~\ref{defn: NucDimAndDrEntropy}, which introduces our coloured entropy $\h_\nuc$ and contractive coloured entropy $\h_\dr$. 
These quantities dominate $\h$ (see Proposition~\ref{prop: IneqBrownVoicEntr}), and in many cases coincide with it. (Actually, we have no examples where $\h$, $\h_\nuc$ and $\h_\dr$ are different finite numbers; see Section~\ref{sect: Questions}.) In particular, in the setting of finite-dimensional compact metrisable spaces, all three notions $\h$, $\h_\nuc$ and $\h_\dr$ agree with the usual topological entropy $h_\tp$ (Theorem~\ref{thm: CommCase}), thus providing a new description of $h_\tp$ in terms of coloured refinements of open covers (see Definition~\ref{defn: ColEntr}).

For approximately finite-dimensional (AF) algebras, $\h_\nuc$ and $\h_\dr$ coincide with Voiculescu's topological approximation entropy $\ha$ (see Proposition~\ref{prop: AFEntropy}), which is a reflection of the fact that AF-algebras are precisely the separable $\C$-algebras of nuclear dimension (or decomposition rank) zero. This allows us to verify that $\h_\nuc(\alpha_k)=\h_\dr(\alpha_k)=\log k$ for the noncommutative Bernoulli shift $\alpha_k$ acting on the uniformly hyperfinite (UHF) algebra $\mathbb{M}_k(\mathbb{C})^{\otimes\zz}$ (see Proposition~\ref{prop: Shift}).

One of the well-known features of classical entropy of a topological dynamical system $(X,T)$ is the variational principle relating topological entropy to measure-theoretic entropy via $h_\tp(T)=\sup_{\mu}h_\mu(T)$, the supremum being taken over $T$-invariant Borel probability measures $\mu$. At the same time, typified by the groundbreaking work of Matui and Sato \cite{MatuiSato12,MatuiSato14} (see also \cite{Schafhauser20,CETW21,CGSTW23}), one of the hallmarks of modern classification theory is the transfer of structure from von Neumann algebras to their $\C$-counterparts, that is, from noncommutative measure theory to noncommutative topology. With this context in mind, in Section~\ref{sect: VarPr} we provide a variational type principle of a different flavour to previously established noncommutative results like \cite{NeshveyevStromer00} that involve the Connes--Narnhofer--Thirring entropy relative to invariant states. Instead, we offer an interpretation of the interplay between finite decomposition rank and tracial quasidiagonality (see Definition~\ref{defn: AmenQDTraces}). Specifically, a careful inspection of the proof of \cite[Theorem 7.5]{BBSTW19} (which, combined with \cite[Theorem A]{TikuisisWhiteWinter17}, shows that $\dr(A)\le1$ for a simple, separable, unital, nuclear, $\Z$-stable $\C$-algebra $A$ whose trace space $T(A)$ is a Bauer simplex) allows us to relate our contractive coloured entropy $\h_\dr$ to a `quasidiagonal entropy' $\hqd_\tau$ relative to traces $\tau\in T(A)$ (see Definition~\ref{defn: TraceEntropy}), assuming that the extreme tracial boundary $\partial_e T(A)$ is finite.

\begin{theorem} \label{thm: MainA}
Let $A$ be a simple, separable, unital, nuclear, $\mathcal{Z}$-stable $\C$-algebra with finitely many extremal traces and which satisfies the UCT. Suppose further that $A$ is not an AF-algebra. If $\alpha$ is an endomorphism of $A$, then \[\h_\dr(\alpha)=\sup_{\tau\in\partial_e T(A)}\hqd_\tau(\alpha)=\sup_{\tau\in T(A)}\hqd_\tau(\alpha).\]
\end{theorem}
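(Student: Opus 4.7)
My plan is to establish the three-way equality via the chain
\[
\sup_{\tau \in \partial_e T(A)} \hqd_\tau(\alpha) \le \sup_{\tau \in T(A)} \hqd_\tau(\alpha) \le \h_\dr(\alpha) \le \sup_{\tau \in \partial_e T(A)} \hqd_\tau(\alpha),
\]
in which the first inequality is trivial. For the second, given a contractive $2$-coloured cp approximation $(F, \psi, \phi_0 + \phi_1)$ of an orbit segment $\cF_m = \bigcup_{k=0}^{m-1}\alpha^k(\cF)$, each order-zero summand $\phi_j$ factors through its multiplicative support so that, for every trace $\tau$, the functional $\tau \circ \phi_j$ decomposes as a weighted tracial state on the matrix summands of $F$. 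A routine manipulation then converts $(F, \psi, \phi_0 + \phi_1)$ into a tracially quasidiagonal approximation of $\cF_m$ at $\tau$ of rank at most $\rk(F)$, yielding $\sup_\tau \hqd_\tau(\alpha) \le \h_\dr(\alpha)$.

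The third inequality $\h_\dr(\alpha) \le \sup_{\tau \in \partial_e T(A)} \hqd_\tau(\alpha)$ is the substantive direction and closely parallels the proof of \cite[Theorem 7.5]{BBSTW19}. Writing $\partial_e T(A) = \{\tau_1, \dots, \tau_n\}$, I would, for each $i$, fix tracially quasidiagonal approximations $(M_{k_i^{(m)}}, \psi_i^{(m)}, \phi_i^{(m)})$ of $\cF_m$ in $\|\cdot\|_{2,\tau_i}$-norm whose asymptotic rank realises $\hqd_{\tau_i}(\alpha) + o(1)$. The hypotheses on $A$ imply $\dr(A) = 1$ by \cite[Theorem 7.5]{BBSTW19} combined with \cite[Theorem A]{TikuisisWhiteWinter17}, and the proof of the former produces in $A_\omega$ a Bauer-simplex partition of unity, namely order-zero contractions $\eta_1, \dots, \eta_n \colon C_0((0,1]) \to A_\omega$ with pairwise orthogonal images, $\sum_i \eta_i(\id) = 1$, and $\tau_i(\eta_j(\id)) \approx \delta_{ij}$. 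I would then use the $\eta_i$ to glue the local approximations $\phi_i^{(m)}$ into a cp map landing in $\bigoplus_i M_{k_i^{(m)}}$, and split the index set $\{1, \dots, n\}$ into two order-orthogonal colour classes using a second tensor factor of $\Z$. A standard $\eps$-test lifts from $A_\omega$ back to $A$, yielding a contractive $2$-coloured cp approximation of $\cF_m$ of rank $\sum_i k_i^{(m)} \le n \cdot \max_i k_i^{(m)}$; since $\log n / m \to 0$, the exponential growth rate of this rank equals $\max_i \hqd_{\tau_i}(\alpha)$.

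The main obstacle is tracking the quantitative rank data through the BBSTW19 construction, which was designed only to produce \emph{some} decomposition-rank-one approximation and not to match the optimal rank of a prescribed trace-by-trace input. Concretely, one must ensure that the same partition of unity $(\eta_i)$ works uniformly for the sequence $(\cF_m)$ of orbit segments (which I would handle by a diagonal argument in $A_\omega$), that conjugation by the $\eta_i$ does not inflate the approximation error $\|\phi \psi(a) - a\|$ beyond a fixed $\eps$, and that the final two-colour reorganisation preserves the additive rank bound $\sum_i k_i^{(m)}$ rather than degenerating to a product. The non-AF assumption is what guarantees the availability of a genuinely $2$-coloured decomposition, so that the definition of $\h_\dr$ is the right target; the finiteness of $\partial_e T(A)$ keeps the Bauer partition of unity finite and renders the combinatorial factor $\log n$ asymptotically negligible.
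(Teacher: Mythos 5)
Your overall strategy coincides with the paper's: the easy inequality $\sup_\tau\hqd_\tau(\alpha)\le\h_\dr(\alpha)$ is obtained by converting a contractive decomposable system into a quasidiagonal system for each trace (using that $\tau\circ\varphi$ is a sum of positive tracial functionals when $\varphi$ is a sum of order zero maps), and the substantive inequality $\h_\dr(\alpha)\le\max_i\hqd_{\tau_i}(\alpha)$ is obtained by running the proof of \cite[Lemma 7.4, Theorem 7.5]{BBSTW19} while tracking ranks, gluing the trace-wise quasidiagonal approximations with a partition of unity indexed by the finite extreme boundary and absorbing the combinatorial factor $2m$ in the limit. Two points deserve correction, however. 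First, the two colours do \emph{not} arise from splitting the index set $\{1,\dots,m\}$ of traces into order-orthogonal classes: since the positive contractions implementing the partition of unity are pairwise orthogonal, the glued map $\sum_i d_{i,k}\otimes\psi_k(y_i)$ is already a \emph{single} cpc order zero map. The doubling of colours comes from the subsequent norm-upgrade step (the $2\times2$ patching lemma, \cite[Theorem 5.5]{BBSTW19}, applied once with $h$ and once with $1_{\Z}-h$), which replaces $F_k$ by $F_k\oplus F_k$; this is where the non-AF hypothesis and $\dr(A)=1$ enter. Second, your proposed use of Kirchberg's $\eps$-test to pass from $A_\omega$ back to $A$ is exactly the step that must be avoided: the $\eps$-test reindexes representing sequences and thereby changes which finite-dimensional algebra appears at a given stage, destroying the rank control that the whole argument is designed to preserve. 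The paper circumvents this by recasting \cite[Theorem 5.5]{BBSTW19} as a local (finitary) approximation statement and extracting explicit unitaries and a stage $k$ for each fixed orbit segment and tolerance, so that the finite-dimensional algebra produced is literally $F_k\oplus F_k$ with $F_k=\bigoplus_i F_{i,k}$ and $\rk(F_k)\le m\max_i\rqd_{\tau_i}$. You correctly identify rank-tracking as the main obstacle, but the diagonal/$\eps$-test mechanism you propose for it is the one place where the argument as written would not go through without modification; also, in the easy direction, the ``routine manipulation'' must include making the downward map $\psi$ approximately multiplicative (after possibly deleting summands of $F$, as in \cite[Proposition 5.1]{KirchbergWinterDecompRank}), since approximate multiplicativity of the map $A\to F$ in operator norm is part of the definition of a quasidiagonal system.
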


The proof of Theorem~\ref{thm: MainA} (as well as some of the properties appearing in Sections~\ref{sect: NoncommutativeEntropy}~and~\ref{sect: Permanence}) amounts to keeping track of the finite-dimensional $\C$-algebras that appear in the relevant interlocking cp approximations. Another proof strategy that, when available, allows us to recover $\h_\nuc$ and $\h_\dr$ from $\h$ is to observe occasions where known cp approximation systems are already decomposable. See Proposition~\ref{prop: CuntzEndo}, which computes the coloured entropy of the canonical endomorphisms of Cuntz algebras. For the analysis undertaken in Section~\ref{sect: TypicalEntropy}, however, we need additional tools.

Another of the well-known aspects of classical entropy is that zero or infinite entropy tends to be typical (see, for example, \cite{GlasnerWeiss01,Yano80}). Following \cite{KerrLi05,Kerr07}, we wish to explore noncommutative instances of this phenomenon. We begin by pointing out that, as in \cite{KerrLi05}, automorphisms with zero (contractive) coloured entropy are indeed dense in $\Aut(A)$ if $A$ is a UHF-algebra or the Cuntz algebra $\cO_2$ (see Corollary~\ref{cor: DenseZeroEntropy}), the key properties of these classifiable $\C$-algebras being that they have real rank zero and no nontrivial automorphisms of their classifying invariants. Next, we recall in Theorem~\ref{thm: DenseInfEntropy} that infinite entropy (in any of the three senses) is a dense property in $\Aut(A)$ whenever $A$ is unital and $\Z$-stable (an observation made in \cite{Kerr07} and essentially contained in \cite{KerrLi05}). Finally we ask, when is infinite entropy a \emph{generic} property (that is, holds on a dense $G_\delta$ set of endomorphisms or automorphisms)?

To address this, we employ the framework of \cite{Jacelon22}, in which stably finite, classifiable $\C$-algebras $A$ are viewed as noncommutative spaces of observables of their trace spaces $T(A)$. This is a particularly fruitful viewpoint when $T(A)$ is a Bauer simplex (that is, has compact boundary $\partial_e T(A)$) and the ordered abelian group $(K_0(A),K_0(A)_+,[1_A])$ admits a unique state. This latter property can be thought of as a connectedness condition, as it holds for a commutative $\C$-algebra $C(X)$ precisely when $X$ is connected (a fact that informs the construction of the models presented in \cite[Theorem 4.4]{Jacelon22}). With these additional assumptions, \cite[Theorem 3.4]{Jacelon22} explains how to use the classification of morphisms \cite{CGSTW23} to lift generic properties of tracial dynamics to the level of the observing $\C$-algebra. Combined with Kerr's method of producing lower bounds for Brown--Voiculescu entropy (see \cite[Remark 3.10]{Kerr04}, \cite[Section 2]{Kerr07} and also Lemma~\ref{lemma: KerrLowerBound} below), and Yano's method \cite{Yano80} of exhibiting generic infinite entropy among (invertible) continuous self-maps of (nontrivial) compact topological manifolds, we arrive at the following partial answer to our question.

\begin{theorem} \label{thm: MainB}
Let $A$ be a simple, separable, unital $\C$-algebra that has finite nuclear dimension and satisfies the UCT. Suppose also that the extreme boundary of $T(A)$ is homeomorphic to a compact topological manifold (with or without boundary), and that $(K_0(A),K_0(A)_+,[1_A])$ admits a unique state. Then, $\h_\dr=\h_\nuc=\h=\infty$ on a dense $G_\delta$ subset of the set of unital endomorphisms $\alpha$ of $A$ for which $T(\alpha)$ preserves $\partial_e T(A)$. If in addition the dimension of $\partial_e T(A)$ is at least two, then $\h_\dr=\h_\nuc=\h=\infty$ on a dense $G_\delta$ subset of $\Aut(A)$.
\end{theorem}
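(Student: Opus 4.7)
The plan is to combine three ingredients: Yano's classical genericity result for topological entropy on manifolds, the lifting machinery of \cite[Theorem 3.4]{Jacelon22} that translates generic properties of tracial dynamics into generic properties of endomorphisms of $A$, and Kerr's lower bound (Lemma~\ref{lemma: KerrLowerBound}) relating Brown--Voiculescu entropy to topological entropy of an induced map on a tracial state space. Throughout, the uniqueness of the state on $(K_0(A),K_0(A)_+,[1_A])$ together with the Bauer simplex assumption ensures that the invariant of $\alpha$ is essentially determined by its tracial action on $\partial_e T(A)$, which is where the classification of \cite{CGSTW23} kicks in.

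First I would set up the relevant function spaces. Let $M:=\partial_e T(A)$, a compact topological manifold. Write $\cE(A)$ for the set of unital endomorphisms of $A$ whose tracial action preserves $M$, topologised as a Polish space (via pointwise norm convergence). Let $C(M,M)$ denote continuous self-maps of $M$ with the uniform topology, and $\home(M)\subset C(M,M)$ the homeomorphisms. The map $\Phi:\cE(A)\to C(M,M)$, $\alpha\mapsto T(\alpha)|_M$, is continuous, and the lifting theorem \cite[Theorem 3.4]{Jacelon22} (which uses the uniqueness of the $K_0$-state plus UCT plus classification) tells us that $\Phi$ is surjective and that the preimage under $\Phi$ of any dense $G_\delta$ subset of $C(M,M)$ is again dense $G_\delta$ in $\cE(A)$. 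Analogously, $\Phi$ restricts to a continuous surjection $\Aut(A)\to \home(M)$ with the same lifting property.

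Next I would apply Yano's theorem \cite{Yano80}: the set of continuous self-maps of $M$ with infinite topological entropy is a dense $G_\delta$ in $C(M,M)$, and if $\dim M\ge 2$, then the set of homeomorphisms of $M$ with infinite topological entropy is a dense $G_\delta$ in $\home(M)$. By the previous step, the preimages under $\Phi$ yield dense $G_\delta$ subsets of $\cE(A)$ and, in the second case, $\Aut(A)$, consisting of those (auto)morphisms whose induced tracial dynamics on $M$ has infinite topological entropy.

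Finally, Kerr's argument (as recorded in Lemma~\ref{lemma: KerrLowerBound}) provides the inequality $\h(\alpha)\ge h_\tp(T(\alpha)|_M)$: roughly, an $(n,\eps)$-separated set for the tracial dynamics, together with functional calculus applied to a fixed continuous function on $M$ separating points at scale $\eps$, produces, via the canonical surjection $A\to C(M)$ coming from the extremal traces, approximately pairwise distant elements in $A$ that force the rank of any sufficiently accurate cp approximation to grow exponentially. Combined with Proposition~\ref{prop: IneqBrownVoicEntr}, which gives $\h_\dr\ge\h_\nuc\ge\h$, this upgrades infinite tracial entropy to $\h=\h_\nuc=\h_\dr=\infty$, yielding both claims of the theorem. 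The main obstacle I anticipate is verifying that the lifting theorem from \cite{Jacelon22}, as stated for homeomorphisms of the simplex boundary, applies verbatim to the endomorphism setting needed here; some care is also required to ensure Kerr's lower bound is genuinely uniform and does not lose information when the manifold $M$ has positive but low dimension.
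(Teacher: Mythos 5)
Your overall skeleton (Yano's genericity theorem, the lifting machinery of \cite[Theorem 3.4]{Jacelon22}, and Kerr's lower bound) is the same as the paper's, and the first two steps are essentially right: $T^{-1}(\cV)$ is a dense $G_\delta$ for any dense $G_\delta$ set $\cV$ of tracial dynamics, by continuity of $\alpha\mapsto T(\alpha)$ and the density statement from \cite{Jacelon22}. The problem is the third step, where your description of the mechanism does not work as stated. Lemma~\ref{lemma: KerrLowerBound} does \emph{not} provide the inequality $\h(\alpha)\ge h_\tp(T(\alpha)|_M)$; no such general inequality is established (or used) anywhere. What the lemma gives is a lower bound on $\h(\alpha)$ in terms of the $\ell_1$-isomorphism constant of $\Span\,\Upsilon\cup\alpha\Upsilon\cup\dots\cup\alpha^{n-1}\Upsilon$ for a finite set $\Upsilon\subseteq A$. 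The reason Yano's set $\cV$ is useful is not merely that its elements have infinite topological entropy, but that each $f\in\cH_n$ contains a pseudo-horseshoe, i.e., a closed invariant subset factoring onto a full shift; it is exactly this shift structure that Kerr's \cite[Lemma 2.2]{Kerr07} converts into a family $\Omega\subseteq C(M)$ whose orbit spans are uniformly $2$-equivalent to $\ell_1$. If you only know $h_\tp(T(\alpha)|_M)=\infty$ you have no way to produce such a family, so your argument must go through the horseshoe structure, not through topological entropy as a number.

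Two further points in your sketch would fail concretely. First, there is no ``canonical surjection $A\to C(M)$'': $A$ is simple (and generally noncommutative), so it has no nontrivial quotients. The paper instead uses the range-of-the-invariant result \cite[Proposition 2.1]{CGSTW23} to \emph{lift} each $g\in\Omega$ to a self-adjoint norm-one $c_g\in A$ with $\|\hat{c_g}|_{\partial_e T(A)}-g\|<1/8$, and then checks that the $\ell_1$-equivalence constant only degrades controllably (from $2$ to $8$) under this approximate lift; this is where the hypotheses on $K_0$ and on $\partial_e T(A)$ being compact enter. Second, ``approximately pairwise distant elements forcing the rank of cp approximations to grow exponentially'' is not a valid mechanism: exponentially many $\eps$-separated elements in the orbit only force the \emph{dimension} of the approximating algebra $F$ to grow linearly in $n$, so $\frac{1}{n}\log\rk(F)\to 0$ and one gets no entropy at all from separation. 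The operator-space content of Kerr's \cite[Lemma 3.1]{Kerr04} --- that $\ell_1^{mn}$ does not embed with uniformly bounded constant into low-rank finite-dimensional $\C$-algebras --- is essential and cannot be replaced by a counting argument. With these corrections (horseshoes rather than entropy values, classification-theoretic lifting rather than a quotient map, and the genuine $\ell_1$ lower bound), your outline becomes the paper's proof.
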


We remark in passing that, appealing to \cite{GlasnerWeiss01} instead of \cite{Yano80}, we would get a corresponding version of Theorem~\ref{thm: MainB} assuming that the extreme tracial boundary $\partial_e T(A)$ is homeomorphic to the Hilbert cube $[0,1]^\infty$. Indeed, the proof of \cite[Theorem 1.5]{GlasnerWeiss01}, which shows that infinite entropy is generic among homeomorphisms of $[0,1]^\infty$, proceeds via local perturbation around fixed points to create topological horseshoes. Our proof is therefore valid in this situation as well (see Theorem~\ref{thm: GenericInfEntropy}).

\subsection*{Organisation}
Section~\ref{sect: Prelim} recalls the definition of the nuclear dimension and decomposition rank of $\C$-algebras and quasidiagonality of traces. In Section~\ref{sect: NoncommutativeEntropy}, we define (contractive) coloured entropy, compare it to the entropy of Voiculescu and Brown, and examine noncommutative Bernoulli shifts and canonical endomorphisms of Cuntz algebras. Section~\ref{sect: Permanence} is devoted to basic permanence properties including monotonicity (at least for restriction to invariant full hereditary subalgebras, these being guaranteed to inherit the nuclear dimension or decomposition rank of the enveloping $\C$-algebra; see Proposition~\ref{prop: Hereditary}), a Kolmogorov--Sinai type result (Proposition~\ref{prop: KolmogorovSinai}) and conjugacy invariance (Proposition~\ref{prop: Conjugacy}). In Section~\ref{sect: Commutative}, we show that (contractive) coloured entropy coincides with topological entropy in the commutative setting. Section~\ref{sect: VarPr} is dedicated to quasidiagonal entropy and Theorem~\ref{thm: MainA}. Section~\ref{sect: TypicalEntropy} explores typical values of entropy and contains Theorem~\ref{thm: MainB}. We conclude the article by compiling a list of open questions in Section~\ref{sect: Questions}.

\subsection*{Acknowledgements}
BJ was supported by the Czech Science Foundation (GA\v{C}R) project 25-15403K
and the Institute of Mathematics of the Czech Academy of Sciences (RVO: 67985840). RN was supported by the European Research Council under the European Union's Horizon Europe research and innovation programme (ERC grant AMEN-101124789) and by the postdoctoral fellowship 1204626N of the Research Foundation Flanders (FWO).
Part of this work was completed during a visit of RN to BJ at the Institute of Mathematics of the Czech Academy of Sciences.
RN would like to thank BJ and the Institute in Prague for the hospitality during the stay.  

For the purpose of open access, the authors have applied a CC BY public copyright license to any author accepted manuscript version arising from this submission.
\allowdisplaybreaks

\section{Preliminaries} \label{sect: Prelim}
\numberwithin{theorem}{section}





\subsection{Notation}

\begin{itemize}
\item We will write $\zz_+$ for the set of nonnegative integers, and $\N$ for the nonzero elements of $\zz_+$.
    \item $\mathbb{M}_n(\mathbb{C})$ will denote the $\C$-algebra of $n\times n$-matrices with coefficients in $\mathbb{C}$, and $\mathcal{K}$ will stand for the $\C$-algebra of compact operators on a separable, infinite-dimensional Hilbert space.
    \item We will write cp and cpc for completely positive and completely positive contractive, respectively. Moreover, ucp will stand for unital and completely positive.
    \item Given a $\C$-algebra $A$, we will write $\End(A)$ for the set of endomorphisms of $A$ and $\Aut(A)$ for the automorphism group of $A$, that is, $\End(A)=\{^*\text{-homomorphisms}\ A\to A\}$ and $\Aut(A)=\{^*\text{-isomorphisms}\ A\to A\}$.
    \item Given a $\C$-algebra $A$ and a free ultrafilter $\omega$ on $\mathbb{N}$, we will write $A_\omega$ for the $\C$-algebra obtained as a quotient of the bounded sequences of $A$ by the null sequences along the ultrafilter $\omega$.
    \item By a `trace' we will always mean a tracial state. For a unital $\C$-algebra $A$, we will denote the set of traces on $A$ by $T(A)$, and the set of extremal traces by $\partial_e T(A)$, both sets equipped with the weak$^*$-topology.
\end{itemize}

\subsection{Nuclear dimension and decomposition rank}

Before defining the notion of finite nuclear dimension for $\C$-algebras, we need to recall the definition of order zero maps. A cp map $\phi: A\to B$ between $\C$-algebras is said to be
\emph{order zero} if, for every $a,b \in A_+$ with $ab=0$, one has $\phi(a)\phi(b)=0$. The general theory for these maps was developed in \cite{WinterZachOrderZero}. 

\begin{defn}[{\cite{KirchbergWinterDecompRank,WinterZachNucDim}}]\label{defn: NucDim}
Let $A$ be a $\C$-algebra and $d$ be a nonnegative integer.
It is said that $A$ has \emph{nuclear dimension at most $d$}, and denoted by $\dimnuc(A)\leq d$, if for any finite set $\cF\subseteq A$ and any $\eps>0$, there exists a finite-dimensional $\C$-algebra $F=F^{(0)}\oplus \ldots \oplus F^{(d)}$  and  maps $\psi\colon A\to F$ and $\varphi\colon F\to A$ such that 

\begin{enumerate}
    \item $\|\varphi\circ\psi(a)-a\|\leq \eps$ for all $a\in \cF$,
    \item $\psi$ is cpc,
    \item if we denote by $\varphi^{(i)}$ the restriction of $\varphi$ to $F^{(i)}$, then $\varphi^{(i)}$ is a cpc order zero map for all $i=0,\ldots, d$.
\end{enumerate}
In this case, we call $(F,\psi,\varphi)$ a \emph{$(d,\cF,\eps)$-decomposable system} for $A$.
If additionally $\varphi$ is contractive, then we call $(F,\psi,\varphi)$ a \emph{contractive $(d,\cF,\eps)$-decomposable system} for $A$. It is said that $A$ has \emph{decomposition rank at most $d$} (written $\dr(A)\leq d$) if, for every $\cF$ and $\eps$, there exists a contractive $(d,\cF,\eps)$-decomposable system for $A$.
The $\C$-algebra $A$ has nuclear dimension (respectively, decomposition rank) equal to $d$ if $d$ is the minimum number for which $\dimnuc(A)\leq d$ (respectively, $\dr(A)\leq d$). If there is no such $d$, then by definition $\dimnuc(A)=\infty$ (respectively, $\dr(A)=\infty$).
\end{defn}

Furthermore, these approximations properties for $\C$-algebras are often related to external approximation properties for traces, such as amenability and quasidiagonality. We will only define the notion of quasidiagonality for a trace. Note that the definition we choose (\cite[Definition 7.2]{BBSTW19}) is formally different than Brown's in \cite{Brown06}, but in fact equivalent as observed in the discussion after \cite[Definition 7.2]{BBSTW19}

\begin{defn}[{\cite[Definition 7.2]{BBSTW19}}]\label{defn: AmenQDTraces}
Let $A$ be a $\C$-algebra.
A trace $\tau\colon A\to\mathbb{C}$ is \emph{quasidiagonal} if for any finite set $\cF\subseteq A$ and any $\eps>0$, there is a finite-dimensional $\C$-algebra $F$ with a trace $\tau_F$ and a cpc map $\varphi\colon A\rightarrow F$ such that $$\|\varphi(ab)-\varphi(a)\varphi(b)\|\leq\eps$$ and $|\tau_F(\varphi(a))-\tau(a)|\leq\eps$ for any $a,b\in \cF$.   
\end{defn}

\section{Noncommutative topological entropy} \label{sect: NoncommutativeEntropy}

Let us first recall the definition of the Brown--Voiculescu entropy, as introduced in \cite{Voiculescu95} for unital nuclear $\C$-algebras, and refined in \cite{Brown99} to cover exact $\C$-algebras.

\begin{defn}[{\cite[Definition 4.1]{Voiculescu95}}]\label{defn: BrownVoiculescuEntropy}
Let $A$ be a nuclear $\C$-algebra and let $\alpha\colon A\to A$ be an endomorphism of $A$.
For any finite set $\cF\subseteq A$ and any $\eps>0$, we say that $(F,\psi,\varphi)$ is an \emph{$(\cF,\eps)$-completely positive system} for $A$ if $F$ is finite dimensional, $\psi\colon A\to F$ and $\varphi\colon F\to A$ are cpc, and $\|\varphi(\psi(a))-a\|\leq\eps$ for any $a\in \cF$. We then consider the \emph{$(\cF,\eps)$-completely positive rank} to be \[\rcp(\cF,\eps)=\inf\{\rk(F)\mid (F,\psi,\varphi) \ \text{is $(\cF,\eps)$-completely positive}\},\] where the \emph{rank} of a finite-dimensional $\C$-algebra is defined to be the dimension of a maximal abelian subalgebra.
We now further define the quantities
\begin{align*}
&\h(\alpha,\cF,\eps)=\limsup\limits_{n\to\infty}\frac{1}{n}\log \rcp(\cF\cup\alpha(\cF)\cup\ldots\cup\alpha^{n-1}(\cF),\eps),\\
&\h(\alpha,\cF)=\sup\limits_{\eps>0}\h(\alpha,\cF,\eps), \\
&\h(\alpha)=\sup\limits_{\cF}\h(\alpha,\cF),\ \text{with $\cF$ ranging over all finite subsets of $A$.}
\end{align*} 
The definition for exact $\C$-algebras is the same, except that the completely positive systems are required to approximate a faithful representation $A\to\mathcal{B}(H)$ rather than the identity map $A\to A$. By convention, $\h(\alpha)=\infty$ if $A$ is not exact.
\end{defn}

If the $\C$-algebras in question have finite nuclear dimension (or finite decomposition rank), we consider the following notions of coloured entropy.

\begin{defn}\label{defn: NucDimAndDrEntropy}
Let $A$ be a $\C$-algebra with nuclear dimension equal to $d$ and let $\alpha\colon A\to A$ be an endomorphism of $A$.
For any finite set $\cF\subseteq A$ and any $\eps>0$, we define the \emph{$(d,\cF,\eps)$-decomposable rank} by \[r_{\nuc}(d,\cF,\eps)=\inf\{\rk(F)\mid (F,\psi,\varphi)\ \text{is $(d,\cF,\eps)$-decomposable}\}.\] 
Moreover, we define the \emph{contractive $(d,\cF,\eps)$-decomposable rank} by \[r_{\dr}(d,\cF,\eps)=\inf\{\rk(F)\mid (F,\psi,\varphi)\ \text{is contractive $(d,\cF,\eps)$-decomposable}\}.\]
We now further define the quantities
\begin{align*}
&\h_\nuc(\alpha,\cF,\eps)=\limsup\limits_{n\to\infty}\frac{1}{n}\log r_\nuc(d,\cF\cup\alpha(\cF)\cup\ldots\cup\alpha^{n-1}(\cF),\eps),\\
&\h_\nuc(\alpha,\cF)=\sup\limits_{\eps>0}\h_\nuc(\alpha,\cF,\eps), \\
&\h_\nuc(\alpha)=\sup\limits_{\cF}\h_\nuc(\alpha,\cF),\ \text{with $\cF$ ranging over all finite subsets of $A$.}
\end{align*}We will often call the last quantity the \emph{coloured entropy}. If $A$ does not have finite nuclear dimension, then $\h_\nuc(\alpha)=\infty$ by convention. 
Using the contractive decomposable rank instead, we can likewise define $\h_\dr(\alpha,\cF,\eps)$, $\h_\dr(\alpha,\cF)$, and $\h_\dr(\alpha)$. Similarly, we will often refer to the last quantity as the \emph{contractive coloured entropy}, and $\h_\dr(\alpha)=\infty$ if $A$ does not have finite decomposition rank.
\end{defn}

\begin{prop}\label{prop: IneqBrownVoicEntr}
Let $A$ be a unital, exact $\C$-algebra and let $\alpha\colon A\to A$ be an endomorphism of $A$. Then $\h(\alpha)\leq \h_\nuc(\alpha)\leq \h_\dr(\alpha)$.  
\end{prop}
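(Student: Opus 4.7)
The plan is to establish each of the two inequalities in turn.

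For $\h_\nuc(\alpha) \leq \h_\dr(\alpha)$, I would simply observe that a contractive $(d,\cF,\eps)$-decomposable system is, by definition, also a $(d,\cF,\eps)$-decomposable system, with contractivity of $\varphi$ being the only additional requirement. Thus $r_\nuc(d,\cF,\eps) \leq r_\dr(d,\cF,\eps)$ for every finite $\cF \subseteq A$ and $\eps > 0$, and applying $(1/n)\log$, $\limsup_n$, $\sup_\eps$ and $\sup_\cF$ in turn preserves this inequality to give $\h_\nuc(\alpha) \leq \h_\dr(\alpha)$.

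For $\h(\alpha) \leq \h_\nuc(\alpha)$, my strategy is to convert a $(d,\cF,\eps)$-decomposable system into a Brown--Voiculescu-style cp system by rescaling. Given such a system $(F,\psi,\varphi)$, the map $\psi$ is already cpc and $\varphi = \sum_{i=0}^{d}\varphi^{(i)}$ is cp of norm at most $d+1$ (as a sum of $d+1$ cpc order zero maps), so $\tilde\varphi = \varphi/(d+1)$ is cpc. A short triangle inequality then gives
\[
\|\tilde\varphi\psi(a) - a\| \leq \frac{\eps}{d+1} + \frac{d\|a\|}{d+1}
\]
for every $a \in \cF$. Setting $M = \max_{a \in \cF}\|a\|$, this shows that $(F,\psi,\tilde\varphi)$ is an $(\cF, \eps/(d+1) + dM/(d+1))$-completely positive system of rank $\rk(F)$. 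Since $\alpha$ is contractive, $\|\cF_n\|_{\max} = M$ for $\cF_n = \cF \cup \alpha(\cF) \cup \ldots \cup \alpha^{n-1}(\cF)$, and applying the construction orbit-wise yields
\[
\rcp(\cF_n, \eps/(d+1) + dM/(d+1)) \leq r_\nuc(d, \cF_n, \eps).
\]
Passing to $(1/n)\log$ and $\limsup_n$ then gives the pointwise entropy bound $\h(\alpha, \cF, \eps/(d+1) + dM/(d+1)) \leq \h_\nuc(\alpha, \cF, \eps)$.

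The hard part will be lifting this pointwise inequality to the full statement $\h(\alpha) \leq \h_\nuc(\alpha)$. Taking $\sup_\eps$ on the right gives $\h_\nuc(\alpha,\cF)$, but on the left one only recovers $\sup_{\delta > dM/(d+1)}\h(\alpha,\cF,\delta)$, which equals $\h(\alpha,\cF) = \sup_{\delta>0}\h(\alpha,\cF,\delta)$ only if $\h(\alpha,\cF,\delta)$ has stabilised above the threshold $dM/(d+1)$. I would address this via the scale-invariance of both entropies (namely $\h(\alpha,\lambda\cF) = \h(\alpha,\cF)$ and $\h_\nuc(\alpha,\lambda\cF) = \h_\nuc(\alpha,\cF)$ for $\lambda>0$, which both follow from the scale-identity $\rcp(\lambda\cF,\delta) = \rcp(\cF,\delta/\lambda)$ and its analogue for $r_\nuc$) combined with the monotonicity of $\delta \mapsto \h(\alpha,\cF,\delta)$. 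Since the outer supremum ranges over all finite subsets of $A$, the flexibility to replace $\cF$ by a well-chosen rescaling lets one absorb the residual threshold in the limit and conclude $\h(\alpha) \leq \h_\nuc(\alpha)$.
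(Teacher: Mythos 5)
Your first inequality $\h_\nuc(\alpha)\leq\h_\dr(\alpha)$ is fine and is exactly what the paper does. The problem is in the second inequality. Rescaling $\varphi$ to $\tilde\varphi=\varphi/(d+1)$ does make it contractive, and your error estimate $\|\tilde\varphi\psi(a)-a\|\le \eps/(d+1)+d\|a\|/(d+1)$ is correct, but it leaves an error floor of $dM/(d+1)$ that does not tend to $0$ as $\eps\to0$. You correctly identify this as "the hard part," but the proposed repair via scale-invariance is circular: the identity $\rcp(\lambda\cF,\delta)=\rcp(\cF,\delta/\lambda)$ (and its $r_\nuc$ analogue) means that replacing $\cF$ by $\lambda\cF$ rescales the threshold to $d\lambda M/(d+1)$ on the left \emph{and} rescales the tolerance on the right by the same factor, so after translating back you recover precisely the inequality $\rcp(\cF_n,\eps'/(d+1)+dM/(d+1))\le r_\nuc(d,\cF_n,\eps')$ you started with. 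The floor is proportional to the norm of the set and is therefore itself scale-covariant; no choice of $\lambda$ absorbs it. Since $\h(\alpha,\cF)=\sup_{\delta>0}\h(\alpha,\cF,\delta)$ and $\delta\mapsto\h(\alpha,\cF,\delta)$ is genuinely decreasing in general, controlling only $\delta\geq dM/(d+1)$ does not give $\h(\alpha,\cF)\le\h_\nuc(\alpha,\cF)$.

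The fix is to normalise $\varphi$ multiplicatively rather than by dividing by its norm bound. This is the standard argument that finite nuclear dimension implies nuclearity, which the paper follows: assume $1\in\cF$, so that $\varphi(\psi(1))$ is within $\delta$ of $1$ and hence invertible with $\|1-\varphi(\psi(1))^{-1/2}\|$ small; replace $\psi$ by a ucp $\psi_0$ with $\psi(a)=\psi(1)^{1/2}\psi_0(a)\psi(1)^{1/2}$ (using \cite[Lemma 2.2.5]{BrownOzawa08}) and set $\varphi_0(x)=\varphi(\psi(1))^{-1/2}\varphi\bigl(\psi(1)^{1/2}x\psi(1)^{1/2}\bigr)\varphi(\psi(1))^{-1/2}$. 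Then $\varphi_0$ is ucp, $\varphi_0\circ\psi_0$ still approximates the identity on the orbit set up to $\eps$ (with $\eps\to0$ as $\delta\to0$), and crucially the finite-dimensional algebra $F$ is unchanged, so $\rcp(\cF_n,\eps)\le r_\nuc(d,\cF_n,\min\{\delta,\eps/4,1/2\})$ with a tolerance on the left that can be made arbitrarily small. That is the step your proposal is missing.
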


\begin{proof}
The fact that $\h_\nuc(\alpha)\leq \h_\dr(\alpha)$ follows from the definition, so it remains to show that $\h(\alpha)\leq \h_\nuc(\alpha)$. 
If $\dim_{\nuc}(A)=\infty$, then $\h_\nuc(\alpha)=\infty$, so the inequality holds. 
Therefore, we can assume that $\dim_\nuc(A)=d$ for some $d\in \mathbb{Z}_+$.

We will now follow the standard proof that a $\C$-algebra with finite nuclear dimension is nuclear.
Let $\cF$ be a finite subset of $A$, $n\in\mathbb{N}$, $0<\eps<1$, and $\delta>0$ such that if $|1-t|\leq\delta$, then $|1-t^{-1/2}|<\frac{\eps}{4(d+1)}.$ We can assume that all elements of $\cF$ are contractions and $1\in\cF$.
Let $(F,\psi,\varphi)$ be a $(d,\cF\cup\alpha(\cF)\cup\ldots\cup\alpha^{n-1}(\cF),\min\{\delta, \frac{\eps}{4},\frac{1}{2}\})$-decomposable system for $A$. In particular, this implies that $\|\varphi(\psi(1))-1\|\leq\delta$, so $\varphi(\psi(1))$ is invertible if $\delta$ is small enough. Moreover, by choice of $\delta$, the continuous functional calculus implies that \[\|1-\varphi(\psi(1))^{-1/2}\|<\frac{\eps}{4(d+1)}.\]Using \cite[Lemma 2.2.5]{BrownOzawa08}, we can now choose a ucp map $\psi_0\colon A\to F$ such that \[\psi(a)=\psi(1)^{1/2}\psi_0(a)\psi(1)^{1/2}, \ a\in A.\]Then, we let $\varphi_0\colon F\to A$ given by \[\varphi_0(x)=\varphi(\psi(1))^{-1/2}\varphi\left(\psi(1)^{1/2}x\psi(1)^{1/2}\right)\varphi(\psi(1))^{-1/2}.\]As in the proof of \cite[Proposition 1.5.4]{Castillejos16}, the maps $\psi_0$ and $\varphi_0$ are ucp and their composition approximates the identity map on $\cF\cup\alpha(\cF)\cup\ldots\cup\alpha^{n-1}(\cF)$ up to $\eps$. Since this procedure did not change the finite-dimensional $\C$-algebra $F$, it follows that \[\h(\alpha,\cF,\eps)\leq\h_\nuc(\alpha,\cF,\min\{\delta, \frac{\eps}{4},\frac{1}{2}\})\leq \h_\nuc(\alpha).\]Since $\cF$ and $\eps$ were arbitrary, this implies that $\h(\alpha)\leq\h_\nuc(\alpha)$.
\end{proof}

Next, we show that in the approximately finite-dimensional setting, both the coloured and the contractive coloured entropies coincide with the \emph{topological approximation entropy} $\ha$ introduced in \cite{Voiculescu95}. The definition of this version of noncommutative entropy is similar to Definition~\ref{defn: BrownVoiculescuEntropy}, except that the finite-dimensional $\C$-algebras appearing in the infimum do not come from completely positive approximation systems. Instead, bearing in mind that the ambient $\C$-algebra $A$ is AF, we take the infimum of the ranks of finite-dimensional $\C$-subalgebras $F$ of $A$ that $\eps$-contain the given finite set $\cF$ (that is, for every $a\in\cF$ there exists $b\in F$ such that $\|a-b\|<\eps$). The rest of the definition is unchanged.

\begin{prop} \label{prop: AFEntropy}
Let $A$ be a separable, unital AF-algebra, and let $\alpha$ be an endomorphism of $A$. Then $\h_{\dr}(\alpha) = \h_{\nuc}(\alpha) = \ha(\alpha)$.
\end{prop}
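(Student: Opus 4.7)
The plan is to sandwich all three quantities between $\ha(\alpha)$ on both sides. First, since any separable unital AF-algebra $A$ has $\dr(A)=\dimnuc(A)=0$, in the single-colour case ($d=0$) the map $\varphi=\varphi^{(0)}$ in a decomposable system is only required to be cpc order zero and is therefore already a contraction. Hence the notions of $(0,\cF,\eps)$-decomposable and contractive $(0,\cF,\eps)$-decomposable systems coincide, yielding $\h_{\nuc}(\alpha)=\h_{\dr}(\alpha)$ immediately.

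The next step is to show $\h_{\dr}(\alpha)\leq \ha(\alpha)$ by manufacturing decomposable systems directly from subalgebra approximations. Given a finite-dimensional $\C$-subalgebra $F\subseteq A$ of rank $r$ that $\eps$-contains some finite set $\cG\subseteq A$, the fact that $F$ (being a direct sum of full matrix algebras) is injective in the category of operator systems allows one to apply Arveson's extension theorem to $\id_F$, producing a ucp map $\psi\colon A\to F$ restricting to the identity on $F$. Pairing this with the inclusion $\iota\colon F\hookrightarrow A$, which is a $*$-homomorphism and hence cpc order zero and contractive, gives a contractive $(0,\cG,2\eps)$-decomposable system of rank $r$. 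Specializing to $\cG=\cF\cup\alpha(\cF)\cup\cdots\cup\alpha^{n-1}(\cF)$, then taking $\limsup\tfrac{1}{n}\log$ and finally the supremum over $\eps>0$ and $\cF$, delivers the required inequality.

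For the reverse direction $\ha(\alpha)\leq\h(\alpha)$, the plan is to invoke the classical comparison due to Voiculescu \cite{Voiculescu95}, which shows that Brown--Voiculescu entropy and approximation entropy agree for unital AF-algebras. Combined with Proposition~\ref{prop: IneqBrownVoicEntr}, this produces the chain
\[
\ha(\alpha)=\h(\alpha)\leq\h_{\nuc}(\alpha)=\h_{\dr}(\alpha)\leq\ha(\alpha),
\]
forcing equality throughout.

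The main (but well-known) obstacle is the half $\ha(\alpha)\leq\h(\alpha)$: one must convert a cpc approximation system into a genuine finite-dimensional subalgebra of $A$ of comparable rank. This typically proceeds by applying the Winter--Zacharias structure theorem to write $\varphi=h\pi$ for a $*$-homomorphism $\pi$ and a positive contraction $h$ commuting with $\pi(F)$, and then using the AF structure to approximate $h$ by a projection via functional calculus, thereby straightening $\varphi(F)$ into an honest subalgebra. Since the desired equality in the AF case is classical, the plan is to cite rather than reprove this step.
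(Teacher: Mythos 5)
Your first two steps match the paper: the observation that for $d=0$ a decomposable system is automatically contractive (so $\h_\nuc(\alpha)=\h_\dr(\alpha)$ on an AF-algebra), and the Arveson-extension-plus-inclusion construction giving $\h_\dr(\alpha)\le\ha(\alpha)$, are both exactly what the paper does. The gap is in the reverse direction. You close the circle via ``$\ha(\alpha)\le\h(\alpha)$, which is classical''. It is not: Voiculescu's \cite[Proposition 4.5]{Voiculescu95} proves only $\h(\alpha)\le\ha(\alpha)$ for AF-algebras, and the reverse inequality is not established there (for the Bernoulli shift the two entropies are computed to agree by separate arguments, Propositions 2.6 and 4.7 of that paper). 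Moreover, the repair you sketch would not deliver it: the structure theorem $\varphi=h\pi$ with $\pi$ a supporting $^*$-homomorphism applies to \emph{order zero} maps, whereas the upward map $\varphi\colon F\to A$ in an $(\cF,\eps)$-completely positive system witnessing $\h$ is merely cpc, so there is nothing to straighten and no way to turn $\varphi(F)$ into a finite-dimensional subalgebra of comparable rank.

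The statement only requires the weaker inequality $\ha(\alpha)\le\h_\nuc(\alpha)$, and that is where your order-zero argument actually belongs: in a $(0,\cG,\eps)$-decomposable system the upward map \emph{is} cpc order zero by definition, and (as in the proof that $\dimnuc(A)=0$ forces $A$ to be AF, traced to \cite[Theorem 3.4]{Winter03}) an approximately unital cpc order zero map $F\to A$ can be perturbed to a $^*$-homomorphism, whose image is a genuine finite-dimensional subalgebra of $A$ of rank at most $\rk(F)$ that approximately contains $\cG$. This yields $\ha(\alpha)\le\h_\nuc(\alpha)\le\h_\dr(\alpha)\le\ha(\alpha)$ without ever invoking $\h$. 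If you redirect your final paragraph from $\h$ to $\h_\nuc$, your proof coincides with the paper's.
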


\begin{proof}
Fix $\eps>0$, a finite set $\cF\subseteq A$, and $n\in\mathbb{N}$. Suppose that $F$ is a finite-dimensional subalgebra of $A$ that $\eps$-contains $\cF\cup\alpha(\cF)\cup\ldots\cup\alpha^{n-1}(\cF)$ (in the sense described above). As in the proof of \cite[Proposition 4.5]{Voiculescu95}, we then get an $(\cF\cup\alpha(\cF)\cup\ldots\cup\alpha^{n-1}(\cF),2\eps)$-completely positive system $(F,\psi,\varphi)$, where $\psi\colon A \to F$ is an Arveson extension of $\id\colon F \to F$ and $\varphi$ is the inclusion map $F \to A$. Since $\varphi$ is a $^*$-homomorphism, so in particular is order zero and contractive, it follows that $\h_\dr(\alpha)\le\ha(\alpha)$. For the reverse inequality, we recall the proof that $\dr(A)=0$ (or equally, $\dimnuc(A)=0$) implies that $A$ is an AF-algebra. The argument can be traced back to \cite[Theorem 3.4]{Winter03}, and as observed there, for unital $A$ amounts to the observation that an approximately unital cpc order zero map $\varphi\colon F \to A$ can be perturbed to a $^*$-homomorphism $\varphi'\colon F\to A$. It follows that $\ha(\alpha)\le\h_\nuc(\alpha)\le\h_\dr(\alpha)$.
\end{proof}

Using Proposition~\ref{prop: AFEntropy}, we can compute the coloured and the contractive coloured entropies for an important class of examples, the noncommutative Bernoulli shifts.

\begin{prop} \label{prop: Shift}
Let $A=B^{\otimes\zz}$ for a given UHF-algebra $B$, and let $\alpha\colon A \to A$ be the tensor shift automorphism induced by the map $i\mapsto i+1$ on the index set $\zz$. Then either $B\cong\mathbb{M}_k(\mathbb{C})$ for some $k\in\N$, in which case $\h_\dr(\alpha) = \h_\nuc(\alpha) = \h(\alpha) = \log k$, or $B$ is infinite dimensional and $\h_\dr(\alpha) = \h_\nuc(\alpha) = \h(\alpha) = \infty$.
\end{prop}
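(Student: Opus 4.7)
The plan is to reduce to the topological approximation entropy $\ha$ via Proposition~\ref{prop: AFEntropy} (noting that $A=B^{\otimes\zz}$ is a UHF-algebra, hence AF, for any UHF-algebra $B$) and then handle the two cases separately using the hierarchy $\h(\alpha)\le\h_\nuc(\alpha)\le\h_\dr(\alpha)=\ha(\alpha)$ granted by Proposition~\ref{prop: IneqBrownVoicEntr} and Proposition~\ref{prop: AFEntropy}.

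Suppose first that $B=\mathbb{M}_k(\mathbb{C})$. For the upper bound on $\ha(\alpha)$, I would observe that any finite set $\cF\subseteq A$ is $\eps$-contained in $\mathbb{M}_k(\mathbb{C})^{\otimes[-N,N]}$ for some sufficiently large $N=N(\eps,\cF)$, so that the iterated set $\cF\cup\alpha(\cF)\cup\dots\cup\alpha^{n-1}(\cF)$ is $\eps$-contained in the finite-dimensional subalgebra $\mathbb{M}_k(\mathbb{C})^{\otimes[-N,N+n-1]}$ of rank $k^{2N+n}$. Taking $\limsup_n\tfrac{1}{n}\log k^{2N+n}=\log k$ then yields $\ha(\alpha)\le\log k$. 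For the matching lower bound on $\h(\alpha)$ I would invoke Voiculescu's classical computation \cite[Proposition 4.7]{Voiculescu95}, which gives $\h(\alpha)\ge\log k$ (one could alternatively exhibit distinguishable blocks directly via the tracial state and Kerr's lower bound method, but the cleanest route is to cite the known calculation). Combined with Proposition~\ref{prop: IneqBrownVoicEntr} and Proposition~\ref{prop: AFEntropy}, this squeezes all three quantities to $\log k$.

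Now suppose $B$ is infinite-dimensional, and write $B=\overline{\bigcup_m B_m}$ as an increasing union of matrix subalgebras $B_m\cong\mathbb{M}_{k_m}(\mathbb{C})$ with $k_m\to\infty$. For each $m$, the $\C$-subalgebra $C_m:=B_m^{\otimes\zz}\subseteq A$ is $\alpha$-invariant, and the restriction $\alpha|_{C_m}$ is conjugate to the $\mathbb{M}_{k_m}(\mathbb{C})$-shift treated above. The Brown--Voiculescu entropy is monotone under restriction to $\alpha$-invariant $\C$-subalgebras: given any faithful representation $A\hookrightarrow\mathcal{B}(H)$, an $(\cF,\eps)$-completely positive system for $A$ with $\cF\subseteq C_m$ restricts (in its first leg) to one for $C_m$ of the same rank, so $\h(\alpha|_{C_m})\le\h(\alpha)$. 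By the previous case we therefore have $\h(\alpha)\ge\log k_m$ for every $m$, forcing $\h(\alpha)=\infty$. The inequalities $\h(\alpha)\le\h_\nuc(\alpha)\le\h_\dr(\alpha)$ complete the argument.

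The only genuinely nontrivial ingredient is the lower bound $\h(\alpha)\ge\log k$ for the finite-rank shift, which I expect to be the main obstacle were it not already a classical result of Voiculescu; otherwise the proof is essentially a bookkeeping exercise combining the AF-characterisation of coloured entropy with monotonicity under invariant subalgebras.
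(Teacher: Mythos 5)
Your proposal is correct and follows essentially the same route as the paper: reduce to $\ha$ via Proposition~\ref{prop: AFEntropy}, obtain $\log k$ for $B=\mathbb{M}_k(\mathbb{C})$ by combining the easy upper bound on $\ha$ with Voiculescu's lower bound \cite[Proposition 4.7]{Voiculescu95} and the inequalities of Proposition~\ref{prop: IneqBrownVoicEntr}, and handle infinite-dimensional $B$ by restricting to the invariant subalgebras $\mathbb{M}_{k_m}(\mathbb{C})^{\otimes\zz}$ and using monotonicity of $\h$. The only cosmetic difference is that you re-derive the upper bound $\ha(\alpha)\le\log k$ by hand where the paper simply cites \cite[Propositions 2.6 and 4.7]{Voiculescu95}.
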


\begin{proof}
For infinite-dimensional $B$, $\alpha$ restricts to the shift on subalgebras of the form $\mathbb{M}_k(\mathbb{C})^{\otimes\zz}$ for arbitrarily large $k$. The second assertion therefore follows from the first by monotonicity of $\h$ (see \cite[Proposition 2.1]{Brown99} and also Proposition~\ref{prop: Hereditary} below) and Proposition~\ref{prop: IneqBrownVoicEntr}. So let us assume that $B=\mathbb{M}_k(\mathbb{C})$. By \cite[Propositions 2.6 and 4.7]{Voiculescu95}, $\ha(\alpha)=\h(\alpha)=\log k$. By Proposition~\ref{prop: AFEntropy}, we then have $\h_\dr(\alpha)=\h_\nuc(\alpha)=\log k$ as well.
\end{proof}

The \emph{canonical endomorphism} of the Cuntz algebra $\cO_k=\C(s_1,\dots,s_k)$, $2\le k<\infty$, is the map $\theta_k\colon \cO_k \to \cO_k$ defined by $\theta_k(a) = \sum_{i=1}^k s_ias_i^*$. It leaves invariant the canonical UHF subalgebra $\mathbb{M}_k(\mathbb{C})^{\otimes\N}\subseteq\cO_k$, on which it acts as the one-sided Bernoulli shift $a\mapsto1\otimes a$. This observation forms part of the proof of \cite[Theorem 4.6]{Choda99}, which computes $\h(\theta_k)=\log k$. Appealing to the alternative proof provided by \cite{BocaGoldstein00}, we discover that $\h_\nuc(\theta_k)=\log k$ as well.

\begin{prop} \label{prop: CuntzEndo}
If $\theta_k\colon \cO_k \to \cO_k$ is the canonical endomorphism described above, then $\h_\dr(\theta_k)=\infty$ and $\h_\nuc(\theta_k)=\log k$.
\end{prop}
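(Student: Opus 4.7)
The equality $\h_\dr(\theta_k)=\infty$ is immediate from the convention in Definition~\ref{defn: NucDimAndDrEntropy}: any unital $\C$-algebra with $\dr<\infty$ is quasidiagonal and hence stably finite, but $\cO_k$ is purely infinite, so $\dr(\cO_k)=\infty$.

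For $\h_\nuc(\theta_k)=\log k$, the lower bound $\h_\nuc(\theta_k)\ge\log k$ is a direct consequence of Proposition~\ref{prop: IneqBrownVoicEntr} combined with Choda's theorem $\h(\theta_k)=\log k$ from \cite[Theorem 4.6]{Choda99}. It remains to show the upper bound $\h_\nuc(\theta_k)\le\log k$.

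The plan for the upper bound is to revisit the alternative proof of $\h(\theta_k)\le\log k$ provided by Boca and Goldstein in \cite{BocaGoldstein00}, which, unlike Choda's original argument, constructs explicit cp approximations of $\cO_k$. Fix a finite set $\cF\subseteq\cO_k$, which after a standard truncation may be assumed to consist of monomials $s_\mu s_\nu^*$ with $|\mu|,|\nu|\le L$, and fix $\eps>0$. For every $n\in\N$, Boca and Goldstein produce an $(\cF_n,\eps)$-cp system through $\mathbb{M}_{k^{n+L}}(\mathbb{C})$, where $\cF_n:=\cF\cup\theta_k(\cF)\cup\ldots\cup\theta_k^{n-1}(\cF)$. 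One would then inspect this construction and verify that the resulting $(F_n,\psi_n,\varphi_n)$ is in fact $(d,\cF_n,\eps)$-decomposable with $d$ bounded by $\dimnuc(\cO_k)<\infty$ (known by \cite{WinterZachNucDim}) and with $\rk(F_n)\le C\cdot k^n$ for a constant $C$ depending only on $\cF$ and $\eps$ (not on $n$). This would yield $\h_\nuc(\theta_k,\cF,\eps)\le\log k$, and the desired bound follows by supremising over $\cF$ and $\eps$.

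The main obstacle is articulating precisely why this $d$-colouring is uniform in $n$; we cannot hope for $d=0$ since $\cO_k$ is not AF, so $\varphi_n$ cannot be chosen to be a $^*$-homomorphism for arbitrary $\cF$. The natural route is to exploit that the image of $\varphi_n$ lies in a finite-dimensional subalgebra close to the UHF core $\mathcal{F}_k\subseteq\cO_k$ (where the nuclear dimension is $0$, since $\mathcal{F}_k$ is AF), together with a bounded-range off-diagonal contribution coming from the spectral subspaces $\cO_k^{(m)}$, $|m|\le L$, of the gauge action. Each such piece should be captured by a single cpc order zero summand, so that the total number of colours needed depends only on $L$ and $\dimnuc(\cO_k)$. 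Once this uniformity is established, the rank of $F_n$ inflates only by a bounded factor, and the asymptotic rate $\log k$ is preserved.
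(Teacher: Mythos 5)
Your treatment of $\h_\dr(\theta_k)=\infty$ and of the lower bound $\h_\nuc(\theta_k)\ge\log k$ matches the paper exactly, and you have correctly identified the Boca--Goldstein construction as the route to the upper bound. However, the step you flag as ``the main obstacle'' --- uniformity of the colouring in $n$ --- is precisely the content of the proof, and it is left open in your proposal; moreover, the route you sketch for closing it would not work as stated. By Definition~\ref{defn: NucDimAndDrEntropy}, the systems entering $r_\nuc$ must be $(d,\cdot,\eps)$-decomposable with $d=\dimnuc(\cO_k)=1$, i.e.\ exactly two colours. Your proposed decomposition into a UHF-core piece plus one order zero summand for each spectral subspace of the gauge action of degree $|m|\le L$ would produce a number of colours growing with $L$, hence systems that are not $(1,\cF_n,\eps)$-decomposable and therefore do not contribute to $r_\nuc(1,\cF_n,\eps)$ at all.

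The resolution in the paper is much simpler and requires no analysis of the gauge action. In the Boca--Goldstein construction the approximating system for $\cF_n$ is not built afresh for each $n$: one fixes a single $(\cF,\eps)$-system $(F_0,\psi_0,\varphi_0)$ coming from nuclearity of $\cO_k$, which (since $\dimnuc(\cO_k)=1$) may be taken to be $(1,\cF,\eps)$-decomposable, and then sets $F_n=\mathbb{M}_{k^m}(\mathbb{C})\otimes F_0$, $\psi_n=(\id\otimes\psi_0)\circ\rho_m$ and $\varphi_n=\rho_m^{-1}\circ(\id\otimes\varphi_0)$, where $\rho_m\colon\cO_k\to\mathbb{M}_{k^m}(\mathbb{C})\otimes\cO_k$ is an isomorphism and $m=n+n_0$ with $n_0$ depending only on $\cF$. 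Since tensoring an order zero map with $\id_{\mathbb{M}_{k^m}}$ and composing with the $^*$-isomorphism $\rho_m^{-1}$ both preserve order zero summands, $\varphi_n$ inherits the two-colour decomposition of $\varphi_0$ verbatim, uniformly in $n$. The rank bound $\rk(F_n)=k^m\cdot\rk(F_0)$ then gives $\limsup_{n\to\infty}\frac{1}{n}\log\rk(F_n)=\log k$ as you intended. You should replace the speculative final paragraph with this observation.
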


\begin{proof}
The equality $\h_\dr(\theta_k)=\infty$ holds because the purely infinite $\C$-algebra $\cO_k$ has infinite decomposition rank. We turn to $\h_\nuc$. In conjunction with Choda's theorem \cite[Theorem 4.6]{Choda99}, Proposition~\ref{prop: IneqBrownVoicEntr} gives that $\h_\nuc(\theta_k)\ge \h(\theta_k)=\log k$. The reverse inequality can be extracted from the proof of  \cite[Proposition 3]{BocaGoldstein00}. Given $\eps>0$, a finite set $\cF\subseteq \cO_k$ (which can be assumed to consist of monomials in the generators $s_1,\dots,s_k$), this proposition provides a sequence $(F_n,\psi_n,\varphi_n)$ of $(\cF\cup\theta_k(\cF)\cup\ldots\cup\theta_k^{n-1}(\cF),\eps)$-completely positive systems such that $\limsup_{n\to\infty}\frac{1}{n}\log \rk(F_n) = \log k$. The starting point in the construction of these systems is an $(\cF,\eps)$-system $(F_0,\psi_0,\varphi_0)$ obtained from nuclearity of $\cO_k$. Since $\dimnuc(\cO_k)=1$, we can assume that $(F_0,\psi_0,\varphi_0)$ is in fact $(1,\cF,\eps)$-decomposable. For each $n$, $\varphi_n$ and $\psi_n$ are defined in terms of an isomorphism $\rho_m \colon \cO_k \to \mathbb{M}_{k^m}(\mathbb{C}) \otimes \cO_k$, where $m=n+n_0$ for a suitable $n_0$ depending on $\cF$. Specifically, we have $F_n=\mathbb{M}_{k^m}(\mathbb{C}) \otimes F_0$, $\psi_n=(\id\otimes\psi_0)\circ\rho_m$ and $\varphi_n=\rho_m^{-1}\circ(\id\otimes\varphi_0)$, so $\varphi_n$ is $1$-decomposable since $\varphi_0$ is. It follows that $\h_\nuc(\theta_k)\le \log k$ and hence that $\h_\nuc(\theta_k)=\log k$.
\end{proof}

\begin{rmk} \label{rmk: CuntzKrieger}
The results of \cite{BocaGoldstein00} apply more generally to the canonical `endomorphisms' $\theta_A$ (which are actually unital completely positive self-maps) of Cuntz--Krieger algebras $\cO_A$ associated to irreducible, non-permutation matrices $A$ taking values in $\{0,1\}$. In this setting, the map $\rho_m \colon \cO_A \to \mathbb{M}_{w(m)}\otimes\cO_A$ used in the proof of Proposition~\ref{prop: CuntzEndo} (wherein $w(m)=k^m$) is in general not an isomorphism. To define $\psi_n$, the map $\rho_m^{-1} \colon \rho_m(\cO_A) \to \cO_A \subseteq \mathcal{B}(H)$ must first be extended to a completely positive map $\mathbb{M}_{w(m)}\otimes\cO_A \to \mathcal{B}(H)$. Because this extension need not be order zero, it is unclear whether we can obtain a decomposable system in this way. We leave open the problem of determining whether $\h_\nuc(\theta_A)$ is equal to the logarithm of the spectral radius of the matrix $A$.

On the other hand, by the same reasoning used in the proof of Proposition~\ref{prop: CuntzEndo}, the entropy bound for polynomial endomorphisms of Cuntz algebras established in \cite[Theorem 2.2]{SkalskiZacharias08} carries over to $\h_\nuc$. More precisely, if $u$ is a unitary in the canonical copy of $M_{k^m}$ in $\cO_k$ and $\rho_u\colon \cO_k\to\cO_k$ is the associated endomorphism defined by $s_i\mapsto us_i$, $i=1,\dots,k$, then $\h_\nuc(\rho_u)\le(m-1)\log k$. Moreover, $\h_\nuc(\alpha)$ agrees with $\h(\alpha)$ for all of the concrete endomorphisms $\alpha$ of $\cO_2$ considered in \cite{SkalskiZacharias08}.
\end{rmk}

\section{Permanence properties} \label{sect: Permanence}

In this section, we will record some permanence properties for the (contractive) coloured entropy. Most of the arguments are essentially following the standard proofs for the Brown--Voiculescu entropy (see \cite[Section 2]{Brown99}). We will start with a form of monotonicity for the coloured entropies. Since neither nuclear dimension nor decomposition rank is inherited by $\C$-subalgebras, we will restrict ourselves to hereditary $\C$-subalgebras.

\begin{prop}\label{prop: Hereditary}
Let $A$ be a $\C$-algebra, $\alpha$ be an endomorphism of $A$, and $B\subseteq A$ be a full hereditary $\C$-subalgebra of $A$ which is invariant under $\alpha$. Then \[\h_\nuc(\alpha|_B)\leq \h_\nuc(\alpha) \ \text{and} \ \h_\dr(\alpha|_B)\leq \h_\dr(\alpha). \] 
\end{prop}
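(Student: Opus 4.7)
My plan is to adapt the standard monotonicity argument for the Brown--Voiculescu entropy (cf.\ \cite[Proposition 2.1]{Brown99}) to the coloured setting by preserving the colour decomposition of the approximating systems. The starting point is the well-known fact that hereditary $\C$-subalgebras inherit finite nuclear dimension and decomposition rank (cf.\ \cite{WinterZachNucDim}): hence $\dimnuc(B) \le \dimnuc(A) =: d$ and $\dr(B) \le \dr(A)$, so the same colour count $d$ suffices to compute both $\h_\nuc(\alpha|_B)$ and $\h_\dr(\alpha|_B)$.

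Fix $\cF \subseteq B$ finite, $\eps > 0$, and $n \in \N$; by $\alpha$-invariance of $B$, the set $\cF_n := \cF \cup \alpha(\cF) \cup \ldots \cup \alpha^{n-1}(\cF)$ lies in $B$. Let $(F, \psi, \varphi)$ be an optimal $(d, \cF_n, \eps')$-decomposable system for $A$, with $\eps'$ to be chosen depending on $\eps$ only. Using that $B$ is hereditary in $A$, I would choose a positive contraction $e \in B$ from a quasi-central approximate unit, satisfying $\|ex - x\| < \eps'$ for every $x$ in the finite set $\cF_n \cup \varphi(\psi(\cF_n))$ and $\|[e, \varphi^{(i)}(y)]\| < \eps'$ for each $y$ in a prescribed finite spanning subset of $F^{(i)}$. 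Set $\psi_B := \psi|_B\colon B \to F$ and $\varphi_B^{(i)}(x) := e^{1/2}\varphi^{(i)}(x)e^{1/2}$; each $\varphi_B^{(i)}$ lands in $B$ by heredity, is cpc by contractivity of $e$, and $\sum_i\varphi_B^{(i)}$ remains contractive whenever $\varphi$ is.

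The main technical point is to recover the order zero property of each $\varphi_B^{(i)}$. Using the structure theorem for order zero maps \cite{WinterZachOrderZero}, write $\varphi^{(i)}(x) = \pi^{(i)}(x) h^{(i)}$ with $\pi^{(i)}$ a $^*$-homomorphism and $h^{(i)}$ a positive contraction commuting with $\pi^{(i)}(F^{(i)})$. Quasi-centrality of $e$ then gives $\varphi_B^{(i)}(x) \approx \pi^{(i)}(x)\cdot eh^{(i)}$, and if $e$ commuted exactly with $\pi^{(i)}(F^{(i)})$ the right-hand side would be genuinely order zero (as $eh^{(i)}$ would remain a positive contraction in the commutant). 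A standard perturbation then produces an exact cpc order zero map $\widetilde\varphi_B^{(i)}\colon F^{(i)} \to B$ within controlled tolerance of $\varphi_B^{(i)}$, so that after choosing $\eps'$ small enough, $\widetilde\varphi_B := \sum_i \widetilde\varphi_B^{(i)}$ satisfies $\|\widetilde\varphi_B(\psi_B(a)) - a\| < \eps$ for all $a \in \cF_n$.

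The resulting $(d, \cF_n, \eps)$-decomposable system $(F, \psi_B, \widetilde\varphi_B)$ for $B$ has the same rank as the original system for $A$, so the decomposable rank for $B$ at tolerance $\eps$ is bounded by the decomposable rank for $A$ at tolerance $\eps'$. Passing to $\limsup_n \frac{1}{n}\log$ followed by the suprema over $\eps$ and $\cF$ then yields $\h_\nuc(\alpha|_B) \le \h_\nuc(\alpha)$; the contractive version follows identically. I expect the main obstacle to be the perturbation upgrading approximate order zero to exact order zero without changing $F^{(i)}$: the quasi-central element $e$ and the tolerance $\eps'$ must be chosen in tandem with the structure theorem in such a way that $\eps'$ is a function of $\eps$ and $d$ alone, independent of $n$, so that the resulting bound on decomposable ranks survives the $\limsup_n \frac{1}{n}\log$.
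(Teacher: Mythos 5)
Your high-level plan (reuse the hereditary-subalgebra dimension argument, keep the finite-dimensional algebra and hence the rank, and make the tolerance depend only on $\eps$ and $d$) is the right one, and it is essentially what the paper does. But the central technical step is flawed. You ask for a positive contraction $e\in B$ with $\|[e,\varphi^{(i)}(y)]\|<\eps'$ for $y$ in a spanning set of $F^{(i)}$. The elements $\varphi^{(i)}(y)$ live in $A$ and are in general nowhere near $B$, and an approximate unit of a hereditary subalgebra is quasicentral in $A$ only when $B$ is an ideal. For instance, take $A=\mathbb{M}_2(\mathbb{C})$, $B=\mathbb{C}e_{11}$ (full and hereditary), $\cF=\{e_{11}\}$ and the tautological one-colour system $F=\mathbb{M}_2$, $\psi=\varphi=\id$: the only candidates are $e=te_{11}$, which force $t\approx1$ to satisfy $\|ee_{11}-e_{11}\|<\eps'$, yet $\|[e_{11},e_{12}]\|=1$. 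So the element $e$ your argument hinges on need not exist, and the subsequent perturbation to an exact order zero map has nothing to start from. The fix—and the route the paper takes, following the proof of \cite[Proposition 2.5]{WinterZachNucDim} (and \cite[Proposition 3.8]{KirchbergWinterDecompRank} for $\h_\dr$)—is to control orthogonality on the \emph{domain} side: choose two positive contractions $h_0,h_1\in B$ with $h_0h_1=h_1$ and $h_1b=b$ for $b\in\cF$, run the approximation for the enlarged set $\cG=\cF\cup\{h_0,h_1\}$ at a tolerance $\eta=\eta(\eps,d)$, and cut $F$ down by a spectral projection $p\in F$ built from $\psi(h_0)$. Restricting an order zero map to the corner $pFp$ is automatically order zero, $\rk(pFp)\le\rk(F)$, and the images are pushed into $B$ by heredity; note also that in the dynamical setting one must check (as the paper does) that the relations $h_0h_1=h_1$ and $h_1b=b$ propagate under $\alpha^k$, which they do since $\alpha$ is a $^*$-homomorphism.

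A second, smaller gap: you only record $\dimnuc(B)\le\dimnuc(A)=d$ and assert that "the same colour count $d$ suffices." But $\h_\nuc(\alpha|_B)$ is defined via $(\dimnuc(B),\cdot,\cdot)$-decomposable systems, so if $\dimnuc(B)<d$ your $(d,\cdot,\cdot)$-systems for $B$ are not the objects whose ranks enter that definition, and the bound does not follow. This is exactly where the \emph{fullness} hypothesis is used: by \cite[Corollary 2.8(ii)]{WinterZachNucDim}, $\dimnuc(B)=\dimnuc(A)$ for a full hereditary subalgebra, which the paper invokes explicitly and your proposal never does.
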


\begin{proof}
If $\dim_\nuc(A)=\infty$, then $\h_\nuc(\alpha)=\infty$, so there is nothing to prove. Therefore, we will assume that $A$ has finite nuclear dimension, say $d$. We will now follow the proof that $B$ has nuclear dimension at most $d$ provided in \cite[Proposition 2.5]{WinterZachNucDim}. Note that since $B$ is a full hereditary $\C$-subalgebra of $A$, we know that $\dim_\nuc(B)=\dim_\nuc(A)$ (\cite[Corollary 2.8(ii)]{WinterZachNucDim}). Thus, the estimates provided by the proof of \cite[Proposition 2.5]{WinterZachNucDim} can indeed be used to compute $\h_\nuc(\alpha|_B)$.

Let $\cF\subseteq B$ be a finite set of positive contractions, $\eps>0$, and $n\in\mathbb{N}$. As in \cite[Proposition 2.5]{WinterZachNucDim}, let $h_0,h_1$ be positive contractions in $B$ such that $h_0h_1=h_1$ and $h_1b=b$ for any $b\in\cF$. Set \[\cG=\cF\cup\{h_0,h_1\},\ \eta=\min\left\{\frac{\eps^8}{13(d+1)},\frac{1}{2^{16}}\right\},\] and let $(F,\psi,\varphi)$ be a $(d, \cG\cup\alpha(\cG)\cup\ldots\cup\alpha^{n-1}(\cG),\eta)$-decomposable system for $A$. One then defines a projection $p\in F$ as in the proof of \cite[Proposition 2.5]{WinterZachNucDim} and builds a decomposable system $(\hat{F},\hat{\psi},\hat{\varphi})$ for $B$, where $\hat{F}=pFp$. Note that the proof of \cite[Proposition 2.5]{WinterZachNucDim} shows that a $(d,\cG,\eta)$-decomposable system for $A$ yields a $(d,\cF,\eps)$-decomposable system for $B$. Then for any $1\leq k\leq n-1$, as $\alpha^k(h_1)\alpha^k(h_0)=\alpha^k(h_0h_1)=\alpha^k(h_1)$ and $\alpha^k(h_1)\alpha^k(b)=\alpha^k(h_1b)=\alpha^k(b)$ for any $b\in\cF$, a $(d,\alpha^k(\cG),\eta)$-decomposable system for $A$ gives a $(d,\alpha^k(\cF),\eps)$-decomposable system for $B$. Therefore, the system $(\hat{F},\hat{\psi},\hat{\varphi})$ is a $(d,\cF\cup\alpha(\cF)\cup\ldots\cup\alpha^{n-1}(\cF),\eps)$-decomposable system for $B$. Since $\mathrm{rank}(\hat{F})\leq\mathrm{rank}(F)$, we have shown that
\begin{equation}\label{eq:Hered1}
\h_\nuc(\alpha|_B,\cF,\eps)\leq\h_\nuc(\alpha,\cG,\eta)\leq \h_\nuc(\alpha).
\end{equation}
But $\cF$ and $\eps$ were arbitrary, so \eqref{eq:Hered1} yields that $\h_\nuc(\alpha|_B)\leq\h_\nuc(\alpha)$. Further assuming that $A$ has finite decomposition rank, using the proof of \cite[Proposition 3.8]{KirchbergWinterDecompRank} instead, one can show in the same fashion that $\h_\dr(\alpha|_B)\leq \h_\dr(\alpha)$.
\end{proof}


\begin{lemma}[cf.{\cite[Lemma 2.4]{Brown99}}]\label{lemma: TechLemBrown}
Let $A$ be a $\C$-algebra with nuclear dimension equal to $d$, $\alpha$ be an automorphism of $A$, $\cF\subseteq A$ finite, and $\eps>0$. Then \[r_\nuc(d,\alpha(\cF),\eps)= r_\nuc(d,\cF,\eps).\]Moreover, if $A$ has decomposition rank equal to $d$, then the same equality holds by replacing $r_\nuc$ with $r_\dr$.
\end{lemma}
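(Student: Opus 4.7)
The plan is to show that automorphisms transport decomposable systems to decomposable systems of the same rank, and to exploit the invertibility of $\alpha$ to get equality rather than just inequality.

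Concretely, given a $(d,\cF,\eps)$-decomposable system $(F,\psi,\varphi)$ for $A$, I would consider the triple $(F,\psi\circ\alpha^{-1},\alpha\circ\varphi)$. I would verify the three conditions of Definition~\ref{defn: NucDim} for the target set $\alpha(\cF)$ as follows. First, $\psi\circ\alpha^{-1}$ is cpc because $\psi$ is cpc and $\alpha^{-1}$ is a $^*$-homomorphism, hence cpc. Second, for each $i$, the restriction of $\alpha\circ\varphi$ to $F^{(i)}$ equals $\alpha\circ\varphi^{(i)}$; this is cpc since $\alpha$ is a $^*$-homomorphism and $\varphi^{(i)}$ is cpc, and it is order zero because for $a,b\in F^{(i)}_+$ with $ab=0$ we have $\varphi^{(i)}(a)\varphi^{(i)}(b)=0$ and hence $\alpha(\varphi^{(i)}(a))\alpha(\varphi^{(i)}(b))=0$ (i.e.\ post-composition with a $^*$-homomorphism preserves the order zero property). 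Third, for any $a\in\alpha(\cF)$, writing $a=\alpha(b)$ with $b\in\cF$, one has
\[\|(\alpha\circ\varphi)\circ(\psi\circ\alpha^{-1})(a)-a\|=\|\alpha(\varphi(\psi(b))-b)\|=\|\varphi(\psi(b))-b\|\leq\eps,\]
since $\alpha$ is isometric. Since the finite-dimensional $\C$-algebra $F$ is unchanged, this yields $r_\nuc(d,\alpha(\cF),\eps)\le r_\nuc(d,\cF,\eps)$.

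The reverse inequality follows from the same construction applied to the automorphism $\alpha^{-1}$ and the finite set $\alpha(\cF)$, giving $r_\nuc(d,\cF,\eps)=r_\nuc(d,\alpha^{-1}(\alpha(\cF)),\eps)\le r_\nuc(d,\alpha(\cF),\eps)$. For the contractive version, the same argument goes through: if $\varphi$ is contractive then so is $\alpha\circ\varphi$ because $\alpha$ is isometric, so a contractive $(d,\cF,\eps)$-decomposable system is transported to a contractive $(d,\alpha(\cF),\eps)$-decomposable system of the same rank.

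There is no real obstacle here; the only point requiring any care is confirming that the order zero property is preserved under post-composition by the $^*$-homomorphism $\alpha$, which is immediate from the defining condition. Crucially, invertibility of $\alpha$ is what allows the symmetric argument producing equality, and this is precisely why the lemma is stated for automorphisms rather than endomorphisms.
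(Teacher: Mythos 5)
Your proposal is correct and follows essentially the same route as the paper's proof: both transport a decomposable system via $(F,\psi\circ\alpha^{-1},\alpha\circ\varphi)$ and obtain the reverse inequality by the symmetric construction with $\alpha^{-1}$. The additional details you verify (preservation of the cpc, order zero and contractivity properties under composition with the isometric $^*$-isomorphism $\alpha$) are exactly the points the paper leaves implicit.
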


\begin{proof}
Let $(F,\psi,\varphi)$ be a $(d,\cF,\eps)$-decomposable system for $A$. Considering the system $(F,\psi\circ\alpha^{-1},\alpha\circ\varphi)$, we get a $(d,\alpha(\cF),\eps)$-decomposable system for $A$, so \[r_\nuc(d,\alpha(\cF),\eps)\leq r_\nuc(d,\cF,\eps).\]Conversely, if we start with a $(d,\alpha(\cF),\eps)$-decomposable system for $A$, say $(F,\psi,\varphi)$, then $(F, \psi\circ\alpha,\alpha^{-1}\circ\varphi)$ is a $(d,\cF,\eps)$-decomposable system for $A$. This shows that \[r_\nuc(d,\alpha(\cF),\eps)\geq r_\nuc(d,\cF,\eps),\]so the conclusion follows. Further assuming that $\dr(A)=d$, exactly the same argument shows that $r_\dr(d,\alpha(\cF),\eps)=r_\dr(d,\cF,\eps)$.
\end{proof}

Using Lemma~\ref{lemma: TechLemBrown}, we can extract the behaviour of the (contractive) coloured entropy under taking powers.

\begin{prop}\label{prop: EntropyPower}
Let $A$ be a $\C$-algebra and $\alpha$ be an automorphism of $A$. Then \[\h_\nuc(\alpha^k)=|k|\h_\nuc(\alpha) \ \text{and} \ \h_\dr(\alpha^k)=|k|\h_\dr(\alpha), \ k\in\mathbb{Z}.\]
\end{prop}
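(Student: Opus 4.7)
The plan is to adapt the standard Brown--Voiculescu argument (cf.\ \cite[Proposition 2.6]{Brown99}) to the decomposable setting, the $d$-colour constraint being preserved by all the necessary manipulations since the ambient $\C$-algebra (and hence its nuclear dimension/decomposition rank) is unchanged by $\alpha$. The trivial cases come first: if $\dimnuc(A)=\infty$ (respectively $\dr(A)=\infty$) then both sides of the first (respectively second) equality are infinite by convention, and if $k=0$ then $\alpha^0=\id$ and $\cF\cup\id(\cF)\cup\ldots\cup\id^{n-1}(\cF)=\cF$, so both sides vanish. We may therefore fix $d=\dimnuc(A)<\infty$ (the argument for $\h_\dr$ will be identical with $r_\nuc$ and $r_\dr$ interchanged throughout) and assume $k\ge1$, writing $\cF_m=\cF\cup\alpha(\cF)\cup\ldots\cup\alpha^{m-1}(\cF)$ for brevity.

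For the upper bound $\h_\nuc(\alpha^k)\leq k\h_\nuc(\alpha)$, I would use the containment $\cF\cup\alpha^k(\cF)\cup\ldots\cup\alpha^{k(n-1)}(\cF)\subseteq \cF_{kn}$ combined with the obvious monotonicity of $r_\nuc(d,\cdot,\eps)$ in the finite set, to deduce
\[\h_\nuc(\alpha^k,\cF,\eps)\leq k\limsup_n\frac{1}{kn}\log r_\nuc(d,\cF_{kn},\eps)\leq k\h_\nuc(\alpha,\cF,\eps),\]
and take the supremum over $\cF$ and $\eps$. For the reverse inequality $\h_\nuc(\alpha^k)\geq k\h_\nuc(\alpha)$, given $\cF$ and $\eps$, I would take $\cG=\cF_k$ and note the exact equality $\cG\cup\alpha^k(\cG)\cup\ldots\cup\alpha^{k(n-1)}(\cG)=\cF_{kn}$, giving
\[\h_\nuc(\alpha^k,\cG,\eps)=k\limsup_n\tfrac{1}{kn}\log r_\nuc(d,\cF_{kn},\eps).\]

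The one delicate point, and the step I view as the main (albeit standard) obstacle, is that the right-hand side is a limit superior along the subsequence $m=kn$, which is a priori only bounded above by $\h_\nuc(\alpha,\cF,\eps)$. The usual workaround is to write an arbitrary $m\in\N$ as $m=kq+r$ with $0\leq r<k$, so that $\cF_m\subseteq \cF_{k(q+1)}$ and hence
\[\tfrac{1}{m}\log r_\nuc(d,\cF_m,\eps)\leq \tfrac{q+1}{q}\cdot\tfrac{1}{k(q+1)}\log r_\nuc(d,\cF_{k(q+1)},\eps).\]
Since $\frac{q+1}{q}\to1$ as $m\to\infty$, taking $\limsup$ in $m$ shows that $\h_\nuc(\alpha,\cF,\eps)$ is in fact bounded above by the subsequential limit superior at $m=kn$, closing the loop and giving $k\h_\nuc(\alpha,\cF,\eps)\leq\h_\nuc(\alpha^k,\cG,\eps)\leq\h_\nuc(\alpha^k)$.

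Finally, for negative $k$ I would first establish $\h_\nuc(\alpha^{-1})=\h_\nuc(\alpha)$ by observing that for $\cH_n=\cF\cup\alpha^{-1}(\cF)\cup\ldots\cup\alpha^{-(n-1)}(\cF)$ one has $\alpha^{n-1}(\cH_n)=\cF_n$, so an iterated application of Lemma~\ref{lemma: TechLemBrown} (valid because $\alpha$ is an automorphism, and hence so is every power) yields $r_\nuc(d,\cH_n,\eps)=r_\nuc(d,\cF_n,\eps)$; this entails $\h_\nuc(\alpha^{-1},\cF,\eps)=\h_\nuc(\alpha,\cF,\eps)$. The identity $\h_\nuc(\alpha^k)=|k|\h_\nuc(\alpha)$ for $k<0$ then follows by applying the positive-power result to $\alpha^{-1}$. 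Everything in this proof uses the decomposable systems only through the rank invariants $r_\nuc$ (respectively $r_\dr$) and through Lemma~\ref{lemma: TechLemBrown}, both of which behave identically in the nuclear dimension and decomposition rank settings, so the same argument delivers the corresponding equality for $\h_\dr$.
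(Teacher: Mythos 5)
Your proof is correct and follows essentially the same route as the paper: the paper likewise reduces the case $k<0$ to $\h_\nuc(\alpha^{-1})=\h_\nuc(\alpha)$ via Lemma~\ref{lemma: TechLemBrown} and then simply cites Voiculescu's argument (\cite[Proposition 1.3]{Voiculescu95}) for $k\ge1$, which is exactly the two-sided estimate (with the $m=kq+r$ bookkeeping) that you write out in full.
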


\begin{proof}
If $\dim_\nuc(A)=\infty$, then $\h_\nuc(\alpha)=\infty$, so there is nothing to prove. Therefore, we will assume that $A$ has finite nuclear dimension, say $d$. We will first show that $\h_\nuc(\alpha)=\h_\nuc(\alpha^{-1})$. Let $\cF\subseteq A$ be finite, $\eps>0$, and $n\in\mathbb{N}$. Then, by Lemma \ref{lemma: TechLemBrown} we get that
\begin{align*}
r_\nuc\left(d,\bigcup_{j=0}^{n-1}\alpha^j(\cF),\eps\right)&=r_\nuc\left(d, \alpha^{-n+1}\left(\bigcup_{j=0}^{n-1}\alpha^j(\cF)\right),\eps\right)\\&=r_\nuc\left(d,\bigcup_{j=0}^{n-1}\alpha^{-j}(\cF),\eps\right),
\end{align*}
which yields that $\h_\nuc(\alpha)=\h_\nuc(\alpha^{-1})$. Therefore, it suffices to check that $\h_\nuc(\alpha^k)=k \h_\nuc(\alpha)$ for $k\geq 0$. Since the equality holds trivially for $k=0$, we shall assume $k\geq 1$. Then the same argument as in \cite[Proposition 1.3]{Voiculescu95} gives the result. Finally, exactly the same proof shows that $\h_\dr(\alpha^k)=|k|\h_\dr(\alpha)$ for any $k\in\mathbb{Z}$.
\end{proof}

Another consequence of Lemma~\ref{lemma: TechLemBrown} is a Kolmogorov--Sinai description of the (contractive) coloured entropy in terms of a generating system of finite subsets (cf.\ \cite[Proposition 3.4]{Voiculescu95} and \cite[Proposition 2.6]{Brown99}).

\begin{prop} \label{prop: KolmogorovSinai}
Let $A$ be a $\C$-algebra and $\alpha$ be an automorphism of $A$. Suppose that there is a net $(\cF_\lambda)_{\lambda\in\Lambda}$ of finite subsets of $A$ (partially ordered by inclusion) such that $\bigcup_{\lambda\in\Lambda,n\in\zz}\alpha^n(\cF_\lambda)$ is dense in $A$. Then
\[
\h_\nuc(\alpha) =\sup_{\lambda\in\Lambda}\h_\nuc(\alpha,\cF_\lambda) \ \text{and} \ \h_\dr(\alpha) =\sup_{\lambda\in\Lambda}\h_\dr(\alpha,\cF_\lambda).
\]
\end{prop}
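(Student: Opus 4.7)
The inequality $\sup_{\lambda}\h_\nuc(\alpha,\cF_\lambda)\le \h_\nuc(\alpha)$ is immediate from the definition, so the plan is to establish the reverse inequality. We may assume $\dimnuc(A)=d<\infty$ (otherwise both sides are $\infty$ by convention, and similarly for $\h_\dr$). Thus it suffices to show that for every finite $\cF\subseteq A$ one has $\h_\nuc(\alpha,\cF)\le \sup_\lambda\h_\nuc(\alpha,\cF_\lambda)$.

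First I would use the density hypothesis to replace $\cF$ by a subset of a single $\alpha$-translated $\cF_\lambda$, up to small error. Fix $\eps>0$ and a small auxiliary $\delta\in(0,\eps)$. For each $a\in\cF$ choose $\lambda_a\in\Lambda$, an integer $n_a\in\zz$, and $y_a\in\cF_{\lambda_a}$ with $\|a-\alpha^{n_a}(y_a)\|<\delta$. Using that the net $(\cF_\lambda)$ is directed by inclusion, I would then pick a single $\lambda_0$ with $\cF_{\lambda_a}\subseteq\cF_{\lambda_0}$ for all $a\in\cF$. Setting $N=\max_{a\in\cF}|n_a|$, we obtain $\cF\subseteq_\delta \bigcup_{k=-N}^{N}\alpha^{k}(\cF_{\lambda_0})$, where $\subseteq_\delta$ denotes $\delta$-approximate inclusion.

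The key step is to relate the decomposable rank of an Arveson--type window of $\cF$ to that of a window of $\cF_{\lambda_0}$. Since approximate inclusion persists under translations by $\alpha^j$, for every $n\in\N$ the union $\cF\cup\alpha(\cF)\cup\cdots\cup\alpha^{n-1}(\cF)$ is $\delta$-approximately contained in $\bigcup_{k=-N}^{n-1+N}\alpha^k(\cF_{\lambda_0})$. Any $(d,\bigcup_{k=-N}^{n-1+N}\alpha^k(\cF_{\lambda_0}),\eps-\delta)$-decomposable system for $A$ therefore yields a $(d,\cF\cup\alpha(\cF)\cup\cdots\cup\alpha^{n-1}(\cF),\eps)$-decomposable system of the same rank, so
\[
r_\nuc\Bigl(d,\bigcup_{j=0}^{n-1}\alpha^j(\cF),\eps\Bigr)\le r_\nuc\Bigl(d,\bigcup_{k=-N}^{n-1+N}\alpha^k(\cF_{\lambda_0}),\eps-\delta\Bigr).
\]
Applying Lemma~\ref{lemma: TechLemBrown} with the automorphism $\alpha^{N}$ converts the right-hand side into $r_\nuc(d,\bigcup_{k=0}^{n-1+2N}\alpha^k(\cF_{\lambda_0}),\eps-\delta)$. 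Dividing by $n$, rewriting $\frac{1}{n}=\frac{1}{n+2N}\cdot\frac{n+2N}{n}$, and taking $\limsup$ as $n\to\infty$ gives
\[
\h_\nuc(\alpha,\cF,\eps)\le \h_\nuc(\alpha,\cF_{\lambda_0},\eps-\delta)\le \h_\nuc(\alpha,\cF_{\lambda_0}).
\]
Letting $\eps\to 0$ (with $\delta$ shrinking accordingly, noting that $\lambda_0$ can be chosen once and for all as $\delta\to 0$ by refining within the net) yields $\h_\nuc(\alpha,\cF)\le\sup_\lambda\h_\nuc(\alpha,\cF_\lambda)$, and taking the supremum over $\cF$ finishes the argument.

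The proof for $\h_\dr$ is verbatim the same, since Lemma~\ref{lemma: TechLemBrown} and the passage from approximate inclusion to decomposable approximation both preserve contractivity of the upward map $\varphi$. The only delicate point I anticipate is the interplay between $\eps$, $\delta$ and the reindexing shift $N$, but because $N$ depends only on $\cF$ and the chosen approximants (not on $n$), the factor $(n+2N)/n$ tends to $1$ and the inequality passes to the limit cleanly.
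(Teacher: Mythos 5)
Your argument is correct and is essentially the paper's own proof made explicit: the paper simply says ``same as \cite[Proposition 2.6]{Brown99}, using Lemma~\ref{lemma: TechLemBrown} in place of Brown's translation lemma,'' and your write-up (approximate inclusion of $\cF$ into a shifted window $\bigcup_{k=-N}^{N}\alpha^k(\cF_{\lambda_0})$, reindexing by $\alpha^N$ via Lemma~\ref{lemma: TechLemBrown}, and the $(n+2N)/n\to1$ limit) is exactly that argument. The one small adjustment: since the upward map $\varphi$ of a $(d,\cdot,\cdot)$-decomposable system is only known to satisfy $\|\varphi\|\le d+1$ (it is a sum of $d+1$ cpc order zero maps, not itself contractive), a $\delta$-perturbation of the test set costs up to $(d+2)\delta$ rather than $2\delta$, so the comparison should be run with tolerance $\eps-(d+2)\delta$ instead of $\eps-\delta$ --- harmless, as $\delta$ is at your disposal, and unnecessary in the $\h_\dr$ case where $\varphi$ is contractive.
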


\begin{proof}
The proof is the same as that of \cite[Proposition 2.6]{Brown99}, just using Lemma~\ref{lemma: TechLemBrown} instead of \cite[Proposition 2.4]{Brown99}.
\end{proof}

Similarly to the Brown--Voiculescu entropy, the notions of entropy introduced in Definition \ref{defn: NucDimAndDrEntropy} are well behaved with respect to taking direct sums and conjugacies. However, since in general the nuclear dimension of an inductive limit can only be bounded above by the dimensions of the building blocks, we cannot obtain general bounds for the coloured entropy of an inductive limit automorphism (but see Remark~\ref{rmk: DiffDefEntropy}). For similar reasons, we cannot provide bounds for the coloured entropy of tensor products. However, we can show the coloured entropy is stable under tensoring with the compact operators.

\begin{prop}\label{prop: TensorProd}
Let $A$ be a $\C$-algebra and $\alpha$ be an endomorphism of $A$. Then\[\h_\nuc(\alpha\otimes\id_\mathcal{K})=\h_\nuc(\alpha) \ \text{and} \ \h_\dr(\alpha\otimes\id_\mathcal{K})=\h_\dr(\alpha).\]
\end{prop}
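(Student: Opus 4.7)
The plan is to prove each equality by establishing two inequalities. If $\dimnuc(A)=\infty$, then $\dimnuc(A\otimes\mathcal{K})=\infty$ as well (and similarly for $\dr$), so both sides of the relevant identity are $\infty$ by convention, and the statement holds trivially. Thus I may assume that the relevant dimension is finite on both sides. For the lower bound $\h_\nuc(\alpha)\leq\h_\nuc(\alpha\otimes\id_\mathcal{K})$ (and the analogous estimate for $\h_\dr$), I would apply Proposition~\ref{prop: Hereditary}: fix a rank-one projection $e\in\mathcal{K}$ and observe that $A\otimes e\cong A$ is a full hereditary $\C$-subalgebra of $A\otimes\mathcal{K}$ invariant under $\alpha\otimes\id_\mathcal{K}$, with the restriction conjugate to $\alpha$.

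For the reverse direction, the key idea is that any finite-set data in $A\otimes\mathcal{K}$ is essentially finite-matrix data. Since $A\otimes\mathcal{K}=\overline{\bigcup_n A\otimes\mathbb{M}_n(\mathbb{C})}$, given a finite set $\cF\subseteq A\otimes\mathcal{K}$ and $\eps>0$ I can find $n\in\N$ and a finite set $\tilde\cF\subseteq A\otimes\mathbb{M}_n(\mathbb{C})$ with $\|x-\tilde x\|<\eps/2$ along a chosen matching. Expanding each $\tilde x$ in the matrix units of $\mathbb{M}_n(\mathbb{C})$, I extract a finite set $\cG\subseteq A$ of matrix coefficients. The orbit of $\tilde\cF$ under $\alpha\otimes\id_\mathcal{K}$ stays inside $A\otimes\mathbb{M}_n(\mathbb{C})$ and has coefficients in the $\alpha$-orbit of $\cG$, which transports the problem back to $A$.

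Next, given a $(d,\bigcup_{j=0}^{k-1}\alpha^j(\cG),\eta)$-decomposable system $(F,\psi,\varphi)$ for $A$, I would form the amplification $(F\otimes\mathbb{M}_n(\mathbb{C}),\psi\otimes\id_{\mathbb{M}_n},\varphi\otimes\id_{\mathbb{M}_n})$ for $A\otimes\mathbb{M}_n(\mathbb{C})$ and post-compose with the canonical inclusion $A\otimes\mathbb{M}_n(\mathbb{C})\hookrightarrow A\otimes\mathcal{K}$. The main point to verify is that each amplified piece $\varphi^{(i)}\otimes\id_{\mathbb{M}_n}$ remains cpc order zero: this follows from the Winter--Zacharias structure theorem, which realises $\varphi^{(i)}$ in the form $h_i\pi_i(\cdot)$ with $h_i$ a positive element in the commutant of $\pi_i(F^{(i)})$ in a suitable bidual, so that the tensored map takes the same shape $(h_i\otimes 1_{\mathbb{M}_n})(\pi_i\otimes\id_{\mathbb{M}_n})(\cdot)$. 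Contractivity of $\varphi\otimes\id_{\mathbb{M}_n}$ required for $\h_\dr$ is automatic from complete contractivity of $\varphi$, and the inclusion into $A\otimes\mathcal{K}$ is a $^*$-homomorphism so preserves all of these properties.

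Choosing $\eta$ sufficiently small in terms of $\eps$ and $n$ yields a $(d,\bigcup_{j=0}^{k-1}(\alpha\otimes\id_\mathcal{K})^j(\tilde\cF),\eps)$-decomposable system for $A\otimes\mathcal{K}$ of rank $n\cdot\rk(F)$. Taking logarithms and dividing by $k$ contributes an additive term $\log n/k$ that vanishes as $k\to\infty$, so
\[
\h_\nuc(\alpha\otimes\id_\mathcal{K},\cF,\eps)\leq\h_\nuc(\alpha,\cG,\eta)\leq\h_\nuc(\alpha),
\]
and taking suprema over $\cF$ and $\eps$ finishes the argument; the contractive case is identical. I expect the principal obstacle to be the verification that the amplified map retains the $(d+1)$-coloured order zero decomposition (brief but not wholly automatic, relying on the structure theorem for order zero maps), with the remainder being careful bookkeeping of the approximation parameter $\eta$ in terms of $\eps$ and $n$.
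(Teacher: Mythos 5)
Your argument is correct and follows essentially the same route as the paper: the inequality $\h_\nuc(\alpha)\le\h_\nuc(\alpha\otimes\id_\mathcal{K})$ via Proposition~\ref{prop: Hereditary} applied to the corner $A\otimes e$, and the reverse inequality by tensoring a decomposable system for $A$ with finite-dimensional data coming from $\mathcal{K}$ (the paper tensors with a $(0,\cG,\eps/2)$-decomposable system for $\mathcal{K}$ and restricts to finite sets of the form $\cF\otimes\cG$ via a Kolmogorov--Sinai-type density argument, whereas you expand a general finite set in matrix units of $\mathbb{M}_n(\mathbb{C})$ and absorb the factor $\log n/k$ directly, which amounts to the same bookkeeping). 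The only point to tidy is that $\psi\otimes\id_{\mathbb{M}_n}$ is defined on $A\otimes\mathbb{M}_n(\mathbb{C})$ rather than on all of $A\otimes\mathcal{K}$, so it should be precomposed with the cpc compression $x\mapsto(1\otimes p_n)x(1\otimes p_n)$, which acts as the identity on the orbit of $\tilde\cF$ and hence changes nothing.
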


\begin{proof}
If $\dim_\nuc(A)=\infty$, then $\h_\nuc(\alpha)=\h_\nuc(\alpha\otimes\id_\mathcal{K})=\infty$, so there is nothing to prove. Therefore, we will assume that $A$ has finite nuclear dimension, say $d$. First note that by \cite[Corollary 2.8(i)]{WinterZachNucDim}, $\dim_\nuc(A\otimes\mathcal{K})=\dim_\nuc(A)=d$. Then, Proposition \ref{prop: Hereditary} shows that \[\h_\nuc(\alpha)\leq\h_\nuc(\alpha\otimes\id_\mathcal{K}).\]

For the reverse inequality, let $\cF\subseteq A$ and $\cG\subseteq \mathcal{K}$ be finite collections of contractions, $\eps>0$, and $n\in\mathbb{N}$. Then let $(F,\psi,\varphi)$ be a $(d,\cF\cup\alpha(\cF)\cup\ldots\cup\alpha^{n-1}(\cF),\eps/2)$-decomposable system for $A$ and $(G,\gamma,\eta)$ be a $(0,\cG,\eps/2)$-decomposable system for $\mathcal{K}$. Using that the tensor of two order zero maps is order zero (\cite[Corollary 4.3]{WinterZachOrderZero}), it follows that $(F\otimes G,\psi\otimes\gamma,\varphi\otimes\eta)$ is a $(d,(\cF\otimes\cG)\cup(\alpha\otimes\id_\mathcal{K})(\cF\otimes\cG)\cup\ldots\cup(\alpha\otimes\id_\mathcal{K})^{n-1}(\cF\otimes\cG),\eps)$-decomposable system for $A\otimes\mathcal{K}$. Thus, we have shown that \[\h_\nuc(\alpha\otimes\id_\mathcal{K},\cF\otimes\cG,\eps)\leq \h_\nuc(\alpha,\cF,\eps/2)+\h_\nuc(\id_\mathcal{K},\cG,\eps/2)\leq \h_\nuc(\alpha),\]where the last inequality follows since $\h_\nuc(\id_\mathcal{K},\cG,\eps/2)=0$. Since finite sets of the form $\cF\otimes\cG$ span a dense set in $A\otimes\mathcal{K}$, it suffices to take the supremum over such sets (in a similar fashion to Proposition~\ref{prop: KolmogorovSinai}) to conclude that $\h_\nuc(\alpha\otimes\id_\mathcal{K})\leq\h_\nuc(\alpha)$. Clearly the same argument also shows that $\h_\dr(\alpha)=\h_\dr(\alpha\otimes\id_\mathcal{K})$. 
\end{proof}

\begin{prop}\label{prop: DirectSums}
Let $A$ be a $\C$-algebra and $\alpha_1,\alpha_2$ be endomorphisms of $A$. Then\[\h_\nuc(\alpha_1\oplus\alpha_2)=\max\{\h_\nuc(\alpha_1),\h_\nuc(\alpha_2)\}\] and \[\h_\dr(\alpha_1\oplus\alpha_2)=\max\{\h_\dr(\alpha_1),\h_\dr(\alpha_2)\}.\]
\end{prop}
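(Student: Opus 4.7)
The plan is to follow the standard script for direct-sum behaviour of noncommutative entropy, treating the colourings in the obvious product way. First I reduce to the case $\dimnuc(A)=d<\infty$ (respectively $\dr(A)=d<\infty$), since otherwise both sides of each equation are $\infty$ by convention; here I use that finite direct sums preserve both noncommutative dimensions, so $A\oplus A$ also has nuclear dimension (or decomposition rank) $d$.

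For the inequality $\h_\nuc(\alpha_1\oplus\alpha_2)\ge\max\{\h_\nuc(\alpha_1),\h_\nuc(\alpha_2)\}$, fix a finite set $\cF\subseteq A$ and $\eps>0$. Let $\iota_1\colon A\to A\oplus A$ be the inclusion into the first summand and $p_1\colon A\oplus A\to A$ the first coordinate projection; both are $^*$-homomorphisms. Given a $(d,\iota_1(\bigcup_{k=0}^{n-1}\alpha_1^k(\cF)),\eps)$-decomposable system $(F,\psi,\varphi)$ for $A\oplus A$ (which exists and witnesses $r_\nuc$ for the ambient algebra), I claim that $(F,\psi\circ\iota_1,p_1\circ\varphi)$ is a $(d,\bigcup_{k=0}^{n-1}\alpha_1^k(\cF),\eps)$-decomposable system for $A$ of the same rank: cpc maps are preserved under pre/post-composition with $^*$-homomorphisms, the order zero condition on each $\varphi^{(i)}$ is preserved upon post-composing with $p_1$ (since $p_1$ is a $^*$-homomorphism), and $\|p_1\varphi\psi\iota_1(a)-a\|\le\|\varphi\psi\iota_1(a)-\iota_1(a)\|\le\eps$. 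Since $(\alpha_1\oplus\alpha_2)^k\iota_1=\iota_1\alpha_1^k$, this yields $\h_\nuc(\alpha_1,\cF,\eps)\le\h_\nuc(\alpha_1\oplus\alpha_2,\iota_1(\cF),\eps)$, and taking suprema gives the desired bound. The symmetric argument handles $\alpha_2$. The same proof applies to $\h_\dr$ because $p_1\circ\varphi$ is contractive whenever $\varphi$ is.

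For the reverse inequality, fix a finite set $\cF\subseteq A\oplus A$ and $\eps>0$, and let $\cF_i=p_i(\cF)\subseteq A$ for $i=1,2$. Choose $(d,\bigcup_{k=0}^{n-1}\alpha_i^k(\cF_i),\eps)$-decomposable systems $(F_i,\psi_i,\varphi_i)$ for $A$ with $i=1,2$. Set $F=F_1\oplus F_2$, graded by $F^{(i)}=F_1^{(i)}\oplus F_2^{(i)}$, and define $\psi=\psi_1\oplus\psi_2\colon A\oplus A\to F$ and $\varphi=\varphi_1\oplus\varphi_2\colon F\to A\oplus A$ coordinatewise. The map $\psi$ is cpc, each $\varphi|_{F^{(i)}}$ is cpc order zero (orthogonality in $F^{(i)}$ means coordinatewise orthogonality, which is preserved coordinatewise), and the approximation $\|\varphi\psi(a_1,a_2)-(a_1,a_2)\|=\max(\|\varphi_1\psi_1(a_1)-a_1\|,\|\varphi_2\psi_2(a_2)-a_2\|)\le\eps$ holds on $\bigcup_k(\alpha_1\oplus\alpha_2)^k(\cF)$. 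Hence
\[
r_\nuc^{A\oplus A}\Bigl(d,\bigcup_{k=0}^{n-1}(\alpha_1\oplus\alpha_2)^k(\cF),\eps\Bigr)\le \rk(F_1)+\rk(F_2)\le 2\max_i r_\nuc^A\Bigl(d,\bigcup_{k=0}^{n-1}\alpha_i^k(\cF_i),\eps\Bigr).
\]
The factor of $2$ disappears after $\tfrac{1}{n}\log$ and $\limsup$, yielding $\h_\nuc(\alpha_1\oplus\alpha_2,\cF,\eps)\le\max_i\h_\nuc(\alpha_i,\cF_i,\eps)\le\max_i\h_\nuc(\alpha_i)$. Since $\cF$ and $\eps$ were arbitrary, the inequality follows. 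For $\h_\dr$, the only additional verification is that $\varphi=\varphi_1\oplus\varphi_2$ is contractive, which is immediate from the direct-sum norm $\|\varphi(x_1,x_2)\|=\max_i\|\varphi_i(x_i)\|$.

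There is no real obstacle here; the mildest subtlety is keeping the colouring indexed consistently across the two summands so that the decomposition $F^{(i)}=F_1^{(i)}\oplus F_2^{(i)}$ really provides a single order zero piece per colour, rather than inadvertently doubling the number of colours. Everything else is a routine direct-sum bookkeeping of the coloured decomposable systems, entirely parallel to the treatment in \cite[Section 2]{Brown99} for $\h$.
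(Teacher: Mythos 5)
Your proof is correct and follows essentially the same route as the paper's: restrict/corestrict a decomposable system for $A\oplus A$ along $\iota_1$ and $p_1$ for the lower bound, and take the direct sum of two systems (with colours matched summandwise) for the upper bound, with the factor of $2$ in the rank washing out in the $\frac1n\log$ limit. The only cosmetic difference is that you handle an arbitrary finite $\cF\subseteq A\oplus A$ directly via its coordinate projections, where the paper works with sets of the form $\cF_1\oplus\cF_2$; both are fine.
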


\begin{proof}
If $\dim_\nuc(A)=\infty$, then $\h_\nuc(\alpha_1)=\h_\nuc(\alpha_1\oplus\alpha_2)=\infty$, so there is nothing to prove. Therefore, we will assume that $A$ has finite nuclear dimension, say $d$. Then $A\oplus A$ has nuclear dimension equal to $d$ by \cite[Proposition 2.3(i)]{WinterZachNucDim}. We will first show that \[\h_\nuc(\alpha_1)\leq\h_\nuc(\alpha_1\oplus\alpha_2).\]Let $\cF\subseteq A$ be finite, $\cF_0=\cF\oplus 0\subseteq A\oplus A$, $\eps>0$, and $n\in\mathbb{N}$. We then let $(F,\psi_0,\varphi_0)$ be a $(d,\cF_0\cup(\alpha_1\oplus\alpha_2)(\cF_0)\cup\ldots\cup(\alpha_1\oplus\alpha_2)^{n-1}(\cF_0),\eps)$-decomposable system for $A\oplus A$. To obtain a decomposable system for $A$, we let $\psi=\psi_0\circ\iota_1\colon A\to F$, where $\iota_1\colon A\to A\oplus A$ is the first-factor embedding, and $\varphi=\pi_1\circ\varphi_0$, where $\pi_1\colon A\oplus A\to A$ is the projection onto the first copy of $A$. Then, it is immediate to see that $(F,\psi,\varphi)$ is a $(d,\cF\cup\alpha_1(\cF)\cup\ldots\cup\alpha_1^{n-1}(\cF),\eps)$-decomposable system for $A$. Hence, \[\h_\nuc(\alpha_1,\cF,\eps)\leq \h_\nuc(\alpha_1\oplus\alpha_2, \cF\oplus 0,\eps)\leq\h_\nuc(\alpha_1\oplus\alpha_2).\] Since $\cF$ and $\eps$ were arbitrary, we get that $\h_\nuc(\alpha_1)\leq\h_\nuc(\alpha_1\oplus\alpha_2)$, and by symmetry, this shows that \[\h_\nuc(\alpha_1\oplus\alpha_2)\geq\max\{\h_\nuc(\alpha_1),\h_\nuc(\alpha_2)\}.\]

Conversely, let $\cF_1\oplus\cF_2\subseteq A\oplus A$ be finite, $\eps>0$, and $n\in\mathbb{N}$. Then, for each $i=1,2$, let $(F_i,\psi_i,\varphi_i)$ be a $(d,\cF_i\cup\alpha_i(\cF_i)\cup\ldots\cup\alpha_i^{n-1}(\cF_i),\eps)$-decomposable system for $A$. Since $(F_1\oplus F_2,\psi_1\oplus\psi_2,\varphi_1\oplus\varphi_2)$ is a $(d,(\cF_1\oplus\cF_2)\cup\ldots\cup(\alpha_1\oplus\alpha_2)^{n-1}(\cF_1\oplus\cF_2),\eps)$-decomposable system for $A\oplus A$, it follows that 
\[
r_\nuc\left(d,\bigcup_{j=0}^{n-1}(\alpha_1\oplus\alpha_2)^{j}(\cF_1\oplus\cF_2),\eps\right) \leq 2 \max_{i=1,2}r_\nuc\left(d,\bigcup_{j=0}^{n-1}\alpha_i^{j}(\cF_i),\eps\right).
\]Therefore, \[\h_\nuc(\alpha_1\oplus\alpha_2,\cF_1\oplus\cF_2,\eps)\leq \max_{i=1,2}\h_\nuc(\alpha_i,\cF_i,\eps)\leq\max_{i=1,2}\h_\nuc(\alpha_i).\]Since $\cF_1\oplus\cF_2$ and $\eps$ were arbitrary, we get that \[\h_\nuc(\alpha_1\oplus\alpha_2)\leq\max\{\h_\nuc(\alpha_1),\h_\nuc(\alpha_2)\}.\] The same argument shows that $\h_\dr(\alpha_1\oplus\alpha_2)=\max\{\h_\dr(\alpha_1),\h_\dr(\alpha_2)\}$.
\end{proof}

In the spirit of \cite[Proposition 2.8]{Brown99}, the notions of entropy from Definition \ref{defn: NucDimAndDrEntropy} are invariant under conjugacy.

\begin{prop}\label{prop: Conjugacy}
Let $A$ be a $\C$-algebra and $\alpha,\sigma$ be automorphisms of $A$. Then \[\h_\nuc(\sigma\circ\alpha\circ\sigma^{-1})=\h_\nuc(\alpha) \ \text{and} \ \h_\dr(\sigma\circ\alpha\circ\sigma^{-1})=\h_\dr(\alpha).\]
\end{prop}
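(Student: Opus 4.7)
The plan is to follow the pattern of \cite[Proposition 2.8]{Brown99}, using Lemma~\ref{lemma: TechLemBrown} as the critical ingredient that replaces Brown's invariance lemma. If $\dim_\nuc(A)=\infty$, then $\h_\nuc(\sigma\alpha\sigma^{-1})=\h_\nuc(\alpha)=\infty$ and there is nothing to prove, so I would assume $\dim_\nuc(A)=d<\infty$. Similarly, for the decomposition rank statement I would assume $\dr(A)=d<\infty$.

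The key algebraic observation is that since $\sigma$ is a (genuine) automorphism, we have $(\sigma\alpha\sigma^{-1})^j=\sigma\alpha^j\sigma^{-1}$ for every $j\ge 0$, and hence, for any finite $\cF\subseteq A$ and $n\in\N$,
\[
\bigcup_{j=0}^{n-1}(\sigma\alpha\sigma^{-1})^j(\cF)=\sigma\!\left(\bigcup_{j=0}^{n-1}\alpha^j(\sigma^{-1}(\cF))\right).
\]
Applying Lemma~\ref{lemma: TechLemBrown} to the automorphism $\sigma$ and the finite set $\cG_n:=\bigcup_{j=0}^{n-1}\alpha^j(\sigma^{-1}(\cF))$, we obtain the equality of decomposable ranks
\[
r_\nuc(d,\sigma(\cG_n),\eps)=r_\nuc(d,\cG_n,\eps).
\]
Dividing by $n$, taking $\limsup_{n\to\infty}$ and then $\sup_{\eps>0}$, this yields
\[
\h_\nuc(\sigma\alpha\sigma^{-1},\cF)=\h_\nuc(\alpha,\sigma^{-1}(\cF)).
\]

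To finish, I would take the supremum over all finite subsets $\cF\subseteq A$. Because $\sigma$ is bijective, the map $\cF\mapsto\sigma^{-1}(\cF)$ is a bijection on the collection of finite subsets of $A$, so
\[
\h_\nuc(\sigma\alpha\sigma^{-1})=\sup_{\cF}\h_\nuc(\alpha,\sigma^{-1}(\cF))=\h_\nuc(\alpha).
\]
The identical argument, using the contractive $(d,\cF,\eps)$-decomposable rank and the corresponding statement in Lemma~\ref{lemma: TechLemBrown}, proves the equality for $\h_\dr$. Since every step is bookkeeping with no serious obstruction (the fullness of $\sigma$ as an automorphism ensures that Lemma~\ref{lemma: TechLemBrown} applies verbatim), I expect no real obstacle; the only point requiring care is that the argument genuinely uses invertibility of $\sigma$, which is why the statement is formulated for automorphisms rather than endomorphisms.
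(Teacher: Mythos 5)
Your proposal is correct and follows essentially the same route as the paper: both reduce the claim to Lemma~\ref{lemma: TechLemBrown} via the identity $(\sigma\alpha\sigma^{-1})^j=\sigma\alpha^j\sigma^{-1}$, obtaining $\h_\nuc(\sigma\alpha\sigma^{-1},\cF,\eps)=\h_\nuc(\alpha,\sigma^{-1}(\cF),\eps)$. The only cosmetic difference is the last step, where you conclude by noting that $\cF\mapsto\sigma^{-1}(\cF)$ is a bijection on finite subsets, whereas the paper proves one inequality and then invokes symmetry; these are equivalent.
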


\begin{proof}
If $\dim_\nuc(A)=\infty$, then $\h_\nuc(\alpha)=\h_\nuc(\sigma\circ\alpha\circ\sigma^{-1})=\infty$, so there is nothing to prove. Therefore, we will assume that $A$ has finite nuclear dimension, say $d$. Let $\cF\subseteq A$ be finite, $\eps>0$, and $n\in\mathbb{N}$. Then, by Lemma \ref{lemma: TechLemBrown}, we get that
\begin{align*}
r_\nuc\left(d,\bigcup_{j=0}^{n-1}(\sigma\circ\alpha\circ\sigma^{-1})^j(\cF),\eps\right)&=r_\nuc\left(d,\bigcup_{j=0}^{n-1}\sigma\circ\alpha^j\circ\sigma^{-1}(\cF),\eps\right)\\&=r_\nuc\left(d,\bigcup_{j=0}^{n-1}\alpha^j(\sigma^{-1}(\cF)),\eps\right).
\end{align*}
This implies that \[\h_\nuc(\sigma\circ\alpha\circ\sigma^{-1},\cF,\eps)=\h_\nuc(\alpha,\sigma^{-1}(\cF),\eps)\leq\h_\nuc(\alpha).\]Since $\cF$ and $\eps$ were arbitrary, we get that $\h_\nuc(\sigma\circ\alpha\circ\sigma^{-1})\leq \h_\nuc(\alpha)$. Conversely, if we let $\gamma=\sigma\circ\alpha\circ\sigma^{-1}$, then the proof above shows that\[\h_\nuc(\gamma)\geq\h_\nuc(\sigma^{-1}\circ\gamma\circ\sigma)=\h_\nuc(\alpha),\] so the conclusion follows. Clearly the same argument also shows that $\h_\dr(\sigma\circ\alpha\circ\sigma^{-1})=\h_\dr(\alpha)$. 
\end{proof}

\section{The commutative case} \label{sect: Commutative}

In this section, we will prove that the notions of coloured entropy we introduced, coincide with the usual topological entropy in the commutative setting. This is known to be the case for the Brown--Voiculescu entropy, as shown in \cite[Proposition 4.8]{Voiculescu95} when $X$ is a compact metric space and extended in \cite[Proposition 3.1]{KerrLi05} to the case of a general compact Hausdorff space. To prove a corresponding result for the coloured and the contractive coloured entropies, we first introduce a new variant of topological entropy.

Recall that a \emph{refinement} of an open cover $\cU$ of a topological space $X$ is an open cover $\cV$ such that every $V\in\cV$ is contained in some $U\in\cU$. The \emph{least common refinement} of open covers $\cU$ and $\cV$ is the set $\cU\vee\cV$ of nonempty intersections $U\cap V$ of elements $U\in\cU$, $V\in\cV$. Given $n\in\N$, we say that an open cover $\cU$ is \emph{$n$-coloured} if there are subsets $\cU_0,\dots,\cU_{n-1}$ of $\cU$ such that each $\cU_i$ is a collection of pairwise disjoint sets, and $\cU=\bigcup_{i=0}^{n-1}\cU_i$.

\begin{defn}\label{defn: ColEntr}
Let $X$ be a compact Hausdorff space with covering dimension equal to $d\in\zz_+$ and let $T$ be a homeomorphism of $X$. 
For any finite open cover $\cU$ of $X$ and $n\in\mathbb{N}$, let $\cU_0^{n-1}=\bigvee_{j=0}^{n-1}T^{-j}(\cU)$.
Then define $N_c(\cU_0^{n-1})$ to be the minimal cardinality of a $(d+1)$-coloured refinement of the open cover $\cU_0^{n-1}$. 
Finally, we define the \emph{coloured topological entropy} of $T$ to be \[h_{\tp,c}(T)=\sup\limits_{\cU}\lim_{n\to\infty}\frac{1}{n}\log N_c(\cU_0^{n-1}),\]where $\cU$ ranges over all finite open covers of $X$
\end{defn}

\begin{lemma}\label{lemma: UpperBddComm1}
Let $X$ be a nonempty compact Hausdorff space with finite covering dimension and $T$ be a homeomorphism of $X$. If we denote the induced automorphism of $C(X)$ by $\alpha_T$, then $\h_\dr(\alpha_T)\leq h_{\tp,c}(T)$.
\end{lemma}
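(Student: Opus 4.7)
My plan is to lift the classical coloured-cover approach to the $\C$-algebraic framework of contractive decomposable systems. Specifically, given a finite set $\cF\subseteq C(X)$ of contractions and $\eps>0$, I will construct, for each $n\in\N$, an explicit contractive $(d,\bigcup_{j=0}^{n-1}\alpha_T^j(\cF),\eps)$-decomposable system whose finite-dimensional $\C$-algebra has rank bounded by $N_c(\cU_0^{n-1})$ for a suitably chosen finite open cover $\cU$ of $X$.

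First, I would use uniform continuity of the finitely many functions in $\cF$ on the compact Hausdorff space $X$ to pick a finite open cover $\cU$ such that each $f\in \cF$ has oscillation less than $\eps$ on every $U\in \cU$. Since $\alpha_T^j(f)(x)=f(T^j x)$, every element of $\bigcup_{j=0}^{n-1}\alpha_T^j(\cF)$ then has oscillation less than $\eps$ on each member of the joint refinement $\cU_0^{n-1}=\bigvee_{j=0}^{n-1}T^{-j}(\cU)$. Next, I would choose a $(d+1)$-coloured refinement $\cV=\cV_0\cup\cdots\cup \cV_d$ of $\cU_0^{n-1}$ of minimal cardinality $N_c(\cU_0^{n-1})$, and (using normality of $X$) fix a partition of unity $\{h_V\}_{V\in \cV}$ subordinate to $\cV$ with $\mathrm{supp}(h_V)\subseteq V$, along with a representative point $x_V\in V$ for each $V$.

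I would then set $F=\bigoplus_{V\in \cV}\mathbb{C}$, decomposed as $F^{(i)}=\bigoplus_{V\in \cV_i}\mathbb{C}$ for $i=0,\dots,d$, and define $\psi\colon C(X)\to F$ by $\psi(f)=(f(x_V))_V$ and $\varphi\colon F\to C(X)$ by $\varphi((a_V)_V)=\sum_V a_V h_V$. By construction $\psi$ is a unital $^*$-homomorphism (evaluation at the $x_V$), hence ucp, while $\varphi$ is contractive because $0\le\sum_V h_V=1$. Crucially, each restriction $\varphi^{(i)}$ is order zero: distinct $V,V'\in \cV_i$ are disjoint, which forces $\mathrm{supp}(h_V)$ and $\mathrm{supp}(h_{V'})$ to be disjoint, and hence $h_V h_{V'}=0$. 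The approximation estimate $\|\varphi(\psi(f))-f\|_\infty\le\eps$ for every $f\in\bigcup_{j=0}^{n-1}\alpha_T^j(\cF)$ follows immediately from the oscillation bound, since each $h_V$ is supported inside $V\in \cV$, which lies in some member of $\cU_0^{n-1}$.

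Consequently $r_\dr(d,\bigcup_{j=0}^{n-1}\alpha_T^j(\cF),\eps)\le N_c(\cU_0^{n-1})$, so passing to $\limsup_n \frac{1}{n}\log$ yields $\h_\dr(\alpha_T,\cF,\eps)\le \limsup_{n\to\infty} \frac{1}{n}\log N_c(\cU_0^{n-1}) \le h_{\tp,c}(T)$; taking the supremum over $\cF$ and $\eps>0$ completes the argument. The main point to watch for is ensuring that the contractivity of the overall $\varphi$ (required for a \emph{contractive} decomposable system) coexists with the order-zero splitting on each colour class; both are delivered simultaneously by the partition-of-unity construction combined with the pairwise disjointness built into the definition of a coloured cover.
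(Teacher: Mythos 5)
Your proof is correct and follows essentially the same route as the paper: choose a finite open cover on which every $f\in\cF$ oscillates by less than $\eps$, pass to a minimal $(d+1)$-coloured refinement of $\cU_0^{n-1}$, and build the contractive decomposable system from point evaluations and a subordinate partition of unity, with the colour classes giving the order-zero splitting. The only cosmetic point is that "uniform continuity" is not literally available on a non-metrisable compact Hausdorff space, but the finite cover you need follows directly from compactness, exactly as the paper implicitly uses.
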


\begin{proof} 
Let $d\in\mathbb{Z}_+$ be the covering dimension of the space $X$. The conclusion follows from the cpc approximations constructed in the proof of \cite[Proposition 4.8]{Voiculescu95}.
We will include the details for the convenience of the reader. 
Let $\cF$ be a finite subset of $C(X)$, $\eps>0$, $n\in\mathbb{N}$, and $\cU=\{U_1,\ldots,U_m\}$ be an open cover of $X$ such that \[|f(x)-f(y)|<\eps, \ \text{for all} \ f\in\cF,\] whenever $x,y\in U_j$ for any $1\leq j\leq m$. 
Let $\cV$ be a $(d+1)$-coloured refinement of $\cU_0^{n-1}$ with minimal cardinality (i.e., $|\cV|=N_c(\cU_0^{n-1}))$. 
In particular, note that for any $f\in \bigcup_{j=0}^{n-1}\alpha_T^j(\cF)$, $V\in\cV$, and $x,y\in V$, we have that $|f(x)-f(y)|<\eps$. 

We can now construct a suitable decomposable system for $C(X)$. 
Let $X_n=(x_j)_{j=1}^{N_c(\cU_0^{n-1})}$ be a subset of $X$ obtained by picking one element from each member of $\cV$, and let $(h_j)_{j=1}^{N_c(\cU_0^{n-1})}$ be a partition of unity subordinate to the open cover $\cV$.
We then define $\psi\colon C(X)\to C(X_n)$ by restriction, and $\varphi\colon C(X_n)\to C(X)$ by \[\varphi(g)=\sum_{j=1}^{N_c(\cU_0^{n-1})}g(x_j)h_j\in C(X).\] Exactly as in \cite[Proposition 4.8]{Voiculescu95}, one gets \[\|f-\varphi(\psi(f))\|_{\sup} \leq\eps, \ \text{for any} \ f\in\bigcup_{j=0}^{n-1}\alpha_T^j(\cF).\]Moreover, $\psi$ is cpc (in fact, a $^*$-homomorphism), and $\varphi$ is also cpc since $h_j\geq 0$ for any $j$ and $\sum h_j=1$. 
Finally, since $\cV$ is a $(d+1)$-coloured refinement, it is immediate to see that $(C(X_n),\psi,\varphi)$ is a contractive $(d,\cF\cup\alpha_T(\cF)\cup\ldots\cup\alpha_T^{n-1}(\cF),\eps)$-decomposable system for $C(X)$. Thus, \[\h_\dr(\alpha_T,\cF,\eps)\leq \lim\limits_{n\to\infty}\frac{1}{n}\log N_c(\cU_0^{n-1})\leq h_{\tp,c}(T),\]which implies that  $\h_\dr(\alpha_T)\leq h_{\tp,c}(T)$.
\end{proof}

\begin{lemma}\label{lemma: UpperBddComm2}
Let $X$ be a nonempty compact metric space with finite covering dimension and $T$ be a homeomorphism of $X$. Then $h_{\tp,c}(T)\leq h_\tp(T)$.    
\end{lemma}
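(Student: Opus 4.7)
The plan is to bound the minimal cardinality $N_c(\cU_0^{n-1})$ of a $(d+1)$-coloured refinement of $\cU_0^{n-1}$ by a fixed multiple of the minimal cardinality $N(\cU_0^{n-1})$ of an ordinary subcover, uniformly in $n$. Once this is achieved, comparing with the Adler--Konheim--McAndrew formulation
\[
h_\tp(T)=\sup_{\cU}\lim_{n\to\infty}\frac{1}{n}\log N(\cU_0^{n-1})
\]
of topological entropy (where $\cU$ ranges over finite open covers of $X$) yields the claim immediately.

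The key ingredient is Ostrand's characterisation of covering dimension: a normal space $X$ has $\dim X\leq d$ if and only if every finite open cover $\{W_1,\dots,W_m\}$ of $X$ admits an open refinement of the form $\{V_i^{(j)}\mid 1\leq i\leq m,\ 0\leq j\leq d\}$ with $V_i^{(j)}\subseteq W_i$ for all $i,j$ and with $\{V_i^{(j)}\}_{i=1}^m$ pairwise disjoint for each fixed $j$. Any such refinement is $(d+1)$-coloured in the sense of Definition~\ref{defn: ColEntr} (with colour classes indexed by $j$) and has at most $(d+1)m$ nonempty elements. Since $X$ is compact metric it is normal, so this result applies.

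Concretely I would fix a finite open cover $\cU$ of $X$ and $n\in\N$, select a subcover $\cV\subseteq\cU_0^{n-1}$ of minimal cardinality $|\cV|=N(\cU_0^{n-1})$, and apply Ostrand's theorem to $\cV$ to obtain a $(d+1)$-coloured open refinement $\cW$ of $\cV$ satisfying $|\cW|\leq (d+1)N(\cU_0^{n-1})$. As a refinement of a refinement of $\cU_0^{n-1}$, $\cW$ is itself a refinement of $\cU_0^{n-1}$, so
\[
N_c(\cU_0^{n-1})\leq (d+1)N(\cU_0^{n-1}).
\]
Taking logarithms, dividing by $n$, and sending $n\to\infty$ absorbs the constant $\log(d+1)$ and gives $\limsup_{n}\frac{1}{n}\log N_c(\cU_0^{n-1})\leq \lim_n\frac{1}{n}\log N(\cU_0^{n-1})$. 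Taking the supremum over all finite open covers $\cU$ delivers $h_{\tp,c}(T)\leq h_\tp(T)$.

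The only nontrivial ingredient is Ostrand's theorem, which is precisely what is needed to pass from an abstract subcover to one that can be decomposed into $d+1$ pairwise-disjoint colour classes at linear cost in cardinality. A naive graph-colouring argument applied to a refinement of order $\leq d+1$ would fail, since the chromatic number of the intersection graph can be much larger than its clique number. I do not expect any further difficulty; the existence of the limit in Definition~\ref{defn: ColEntr} (not obvious from first principles, in contrast to the subadditivity-based existence for $h_\tp$) is irrelevant for the inequality, which holds with $\limsup$ on the left.
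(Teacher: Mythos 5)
Your proposal is correct and takes essentially the same route as the paper: both reduce the lemma to the uniform bound $N_c(\cU_0^{n-1})\leq (d+1)N(\cU_0^{n-1})$ obtained from the existence of a $(d+1)$-coloured refinement of any finite open cover at the cost of a factor $d+1$, and then absorb the constant $\log(d+1)$ in the limit. The only difference is that you cite Ostrand's theorem for the coloured refinement, whereas the paper derives it directly from the Hurewicz--Wallman decomposition of $X$ into $d+1$ zero-dimensional subsets (of which Ostrand's theorem is the packaged form), so the two arguments are substantively identical.
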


\begin{proof}
Let $d\in\mathbb{Z}_+$ be the covering dimension of the space $X$ and let $\cV=\{V_1,\ldots,V_m\}$ be a finite open cover of $X$. By following the strategy in \cite[Theorem V.1]{HurewiczWallman48}, we claim that there exists a $(d+1)$-coloured refinement of $\cV$ with cardinality $(d+1)|\cV|$. First, we can apply the decomposition theorem \cite[Theorem III.3]{HurewiczWallman48} to write \[X=X_0\cup\ldots\cup X_d\] as a union of $(d+1)$ sets of dimension $0$. As $\cV$ induces an open cover for each of the $(d+1)$ sets, we can apply Proposition $B)$ on page $54$ of \cite{HurewiczWallman48} to find refinements for each $X_i$, where the open sets are disjoint. Moreover, Proposition $B)$ mentioned above also ensures that each refinement has cardinality $|\cV|=m$. Thus, the union of all these open sets is a $(d+1)$-coloured refinement of $\cV$ of cardinality $(d+1)|\cV|$.

We now let $\cU$ be an open cover of $X$ and $n\in\mathbb{N}$. Applying the result above to the open cover $\cU_0^{n-1}$ yields that \[N_c(\cU_0^{n-1})\leq (d+1)N(\cU_0^{n-1}),\] where $N(\cdot)$ denotes the minimal cardinality of a subcover. Therefore,
\begin{align*}
\lim_{n\to\infty}\frac{1}{n}\log N_c(\cU_0^{n-1}) &\leq \lim_{n\to\infty}\frac{1}{n}(\log (d+1)+\log N(\cU_0^{n-1}))\\
&=\lim_{n\to\infty}\frac{1}{n}\log N(\cU_0^{n-1}).
\end{align*}
By definition of the topological entropy $h_\tp(T)$ of $T$ and its coloured variant $h_{\tp,c}(T)$, this shows that $h_{\tp,c}(T)\leq h_\tp(T)$.
\end{proof}

Combining Lemma~\ref{lemma: UpperBddComm1} and Lemma~\ref{lemma: UpperBddComm2}, we can prove that the (contractive) coloured entropy coincides with the topological entropy in the commutative case.

\begin{theorem}\label{thm: CommCase}
Let $X$ be a nonempty compact metric space with finite covering dimension and $T$ be a homeomorphism of $X$. 
If we denote the induced automorphism of $C(X)$ by $\alpha_T$, then \[h_\tp(T)=\h(\alpha_T)=\h_\nuc(\alpha_T)=\h_\dr(\alpha_T)=h_{\tp,c}(T).\]    
\end{theorem}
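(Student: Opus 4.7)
The plan is to close a circle of inequalities: I will combine the lemmas just established with the already-known commutative identification of Brown--Voiculescu entropy to force all five quantities to coincide. Specifically, I intend to chain together
\[
h_\tp(T)\;=\;\h(\alpha_T)\;\le\;\h_\nuc(\alpha_T)\;\le\;\h_\dr(\alpha_T)\;\le\;h_{\tp,c}(T)\;\le\;h_\tp(T),
\]
and conclude that every inequality is an equality.

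For the first equality $h_\tp(T)=\h(\alpha_T)$, I would simply invoke the known commutative identification of the Brown--Voiculescu entropy due to Voiculescu (\cite[Proposition 4.8]{Voiculescu95}), whose extension to compact Hausdorff spaces is \cite[Proposition 3.1]{KerrLi05}. The two middle inequalities are exactly Proposition~\ref{prop: IneqBrownVoicEntr} applied to the unital exact (in fact, commutative) $\C$-algebra $C(X)$; note that $C(X)$ has finite nuclear dimension and decomposition rank because $X$ has finite covering dimension, so neither coloured entropy defaults to $\infty$.

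The two remaining inequalities are the content of Lemmas~\ref{lemma: UpperBddComm1}~and~\ref{lemma: UpperBddComm2}: the bound $\h_\dr(\alpha_T)\le h_{\tp,c}(T)$ comes from the explicit contractive decomposable system built from a partition of unity subordinate to a minimal $(d+1)$-coloured refinement, while $h_{\tp,c}(T)\le h_\tp(T)$ follows from the decomposition-theoretic refinement argument using \cite[Theorems III.3 and V.1]{HurewiczWallman48}.

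There is no real obstacle here, since every ingredient is already in place; the only thing to check is that the hypotheses of each lemma are satisfied. In particular, $X$ being a nonempty compact metric space of finite covering dimension suffices for both lemmas, and the induced automorphism $\alpha_T$ of $C(X)$ is exact so that Proposition~\ref{prop: IneqBrownVoicEntr} applies. The proof is then a one-line assembly of the chain of inequalities above.
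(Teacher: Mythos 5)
Your proposal is correct and follows exactly the same route as the paper: the chain $h_\tp(T)=\h(\alpha_T)\le\h_\nuc(\alpha_T)\le\h_\dr(\alpha_T)\le h_{\tp,c}(T)\le h_\tp(T)$ assembled from \cite[Proposition 4.8]{Voiculescu95}, Proposition~\ref{prop: IneqBrownVoicEntr}, Lemma~\ref{lemma: UpperBddComm1} and Lemma~\ref{lemma: UpperBddComm2}. Your added remark that finite covering dimension of $X$ guarantees finite nuclear dimension and decomposition rank of $C(X)$ (so the coloured entropies do not default to $\infty$) is a correct and worthwhile sanity check.
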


\begin{proof}
Combining \cite[Proposition 4.8]{Voiculescu95}, Proposition~\ref{prop: IneqBrownVoicEntr}, Lemma~\ref{lemma: UpperBddComm1}, and Lemma~\ref{lemma: UpperBddComm2} in this order, we obtain the sequence of inequalities
\[h_\tp(T)=\h(\alpha_T)\leq\h_\nuc(\alpha_T)\leq\h_\dr(\alpha_T)\leq h_{\tp,c}(T)\leq h_\tp(T).\]Hence, we have equality everywhere and the conclusion follows.
\end{proof}

\section{A variational type principle for coloured entropy}\label{sect: VarPr}

In this section, we will study the contractive coloured entropy in the setting of simple classifiable $\C$-algebras. The main objective is obtaining a characterisation of the contractive coloured entropy in terms of a suitable notion of entropy for traces. This characterisation aims to shed light on the problem of obtaining a variational principle for noncommutative topological entropy. 

Recall that the \emph{variational principle} states that if $T$ is a homeomorphism of a compact Hausdorff space $X$, then $h_\tp(T)=\sup_{\mu}h_\mu(T),$ where the supremum is taken over $T$-invariant Borel probability measures $\mu$ of $X$ (see for example \cite[Section 6.3, Theorem 3.10]{Petersen83}). In the noncommutative case, Voiculescu showed in \cite[Proposition 4.6]{Voiculescu95} that for an automorphism $\alpha$ of a $\C$-algebra $A$, one has that $\h(\alpha)\geq\sup_\sigma h_\sigma(\alpha)$, where the supremum is taken over $\alpha$-invariant states $\sigma$. To consider this problem in the context of the contractive coloured entropy, we will introduce a new notion of entropy for \emph{quasidiagonal traces} (see Definition~\ref{defn: AmenQDTraces}).

\begin{defn}\label{defn: TraceEntropy}
Let $A$ be a $\C$-algebra, $\alpha$ be an endomorphism of $A$, and $\tau$ be a quasidiagonal trace on $A$. For any finite set $\cF\subseteq A$ and any $\eps>0$, we say $(F,\tau_F,\varphi)$ is an \emph{$(\cF,\eps)$-quasidiagonal system} for $\tau$ if $F$ is finite dimensional with a trace $\tau_F$, and $\varphi\colon A\to F$ is a cpc map such that \[\|\varphi(ab)-\varphi(a)\varphi(b)\|\leq \eps\]and \[|\tau(a)-\tau_F(\varphi(a))|\leq\eps\] for any $a,b\in\cF$. We then consider the \emph{$(\cF,\eps)$-quasidiagonal rank} of $\tau$ to be \[\rqd_\tau(\cF,\eps)=\inf\{\mathrm{rank}(F)\mid (F,\tau_F,\varphi) \ \text{is $(\cF,\eps)$-quasidiagonal} \ \text{for} \ \tau_F\in T(F)\}.\] 
We now further define the quantities
\begin{align*}
&\hqd_\tau(\alpha,\cF,\eps)=\limsup\limits_{n\to\infty}\frac{1}{n}\log \rqd_\tau(\cF\cup\alpha(\cF)\cup\ldots\cup\alpha^{n-1}(\cF),\eps),\\
&\hqd_\tau(\alpha,\cF)=\sup\limits_{\eps>0}\hqd_\tau(\alpha,\cF,\eps), \\
&\hqd_\tau(\alpha)=\sup\limits_{\cF}\hqd_\tau(\alpha,\cF),\ \text{with $\cF$ ranging over all finite subsets of $A$.}
\end{align*}We will often refer to $\hqd_\tau(\alpha)$ as the \emph{quasidiagonal entropy of $\alpha$ with respect to $\tau$}. 
\end{defn}

\begin{rmk}\label{rmk: AmenRankTau}
If $\tau$ is an amenable trace, one can define a similar notion of an amenable entropy of $\alpha$ with respect to $\tau$ simply by considering cpc maps which approximately commute in $2$-norm rather than in norm.  
\end{rmk}




\begin{prop}\label{prop: LowerBddQD}
Let $A$ be a simple, unital $\C$-algebra with finite decomposition rank, and let $\alpha$ be an endomorphism of $A$. If $\tau$ is a trace on $A$, then \[\h_\dr(\alpha)\geq \hqd_\tau(\alpha).\]
\end{prop}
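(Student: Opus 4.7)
The plan is to establish a finitary comparison between contractive decomposable rank and quasidiagonal rank: for every finite $\cF\subseteq A$ and every $\eps>0$, I aim to show that there exist a finite set $\cG\supseteq\cF\cup\{1_A\}$ and $\delta>0$ (depending only on $\cF,\eps,\tau$, and $d=\dr(A)$) such that any contractive $(d,\cG,\delta)$-decomposable system $(F,\psi,\varphi)$ of $A$ yields an $(\cF,\eps)$-quasidiagonal system for $\tau$ of rank at most $\rk(F)$ (with at worst a constant factor like $d+1$, which is absorbed by the $\tfrac{1}{n}\log$ limit). Applying this with $\cF$ replaced by $\cF^{(n)}:=\cF\cup\alpha(\cF)\cup\ldots\cup\alpha^{n-1}(\cF)$ and using that $\alpha$ commutes with forming iterates of the chosen enlargement $\cG$ (so the derived decomposable supremum can be absorbed into the definition of $\h_\dr(\alpha)$), dividing by $n$ and taking $\limsup$ then gives $\hqd_\tau(\alpha,\cF,\eps)\leq\h_\dr(\alpha)$, and supremising over $\cF$ and $\eps$ delivers the claim.

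For the trace on $F$, I would set $\tau_F=(\tau\circ\varphi)/\tau(\varphi(1_F))$. Using the structure theorem for cpc order zero maps \cite{WinterZachOrderZero}, each $\varphi^{(i)}$ factors as $\varphi^{(i)}(x)=h_i\pi_i(x)$ with $\pi_i$ a $*$-homomorphism whose range commutes with $h_i=\varphi^{(i)}(1_{F^{(i)}})$. Combining this with the trace property of $\tau$ on $A$ shows each $\tau\circ\varphi^{(i)}$ is tracial on $F^{(i)}$, and hence $\tau\circ\varphi$ is tracial on $F=\bigoplus F^{(i)}$; the normalisation $\tau(\varphi(1_F))$ is close to $1$ because $\varphi\psi(1_A)\approx 1_A$ and $\varphi$ is contractive, so $\tau_F$ is a well-defined tracial state on $F$. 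The trace-compatibility condition $|\tau_F(\tilde\varphi(a))-\tau(a)|\le\eps$ for any cpc $\tilde\varphi$ approximating $\psi$ in $2$-norm then follows from $\varphi\psi(a)\approx a$ on $\cG$.

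The main obstacle is producing a cpc map $\tilde\varphi\colon A\to F$ that is approximately multiplicative on $\cF$ in operator norm (as required by Definition~\ref{defn: TraceEntropy}). The naive choice $\tilde\varphi=\psi$ is only tracially multiplicative and not operator-norm multiplicative in general, so an upgrade is needed. I would adapt the strategy of the proof of \cite[Theorem 7.5]{BBSTW19}: in the tracial ultrapower $A_\tau^\omega$, the order zero pieces $\varphi^{(i)}$ combined with finite decomposition rank of $A$ force the positive contractions $h_i$ to be absorbed by (or rounded to) projections modulo tracial error, turning each $\varphi^{(i)}$ into a genuine $*$-homomorphism at the ultrapower level. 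A standard ucp-selection / reindexation argument then lifts this $*$-homomorphism structure back to a cpc map $\tilde\varphi$ at the finite stage: approximately multiplicative in operator norm on $\cF$, with rank controlled by $\rk(F)$ up to a constant factor depending only on $d$, and $2$-norm close to $\psi$ (hence inheriting the trace-compatibility). The principal technical difficulty is precisely the rank-controlled nature of this lift; it is this rank bookkeeping that is engineered in the construction of \cite{BBSTW19}, and which allows the finitary lemma to feed directly into the entropy comparison outlined in the first paragraph.
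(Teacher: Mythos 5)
Your overall architecture matches the paper's: a finitary comparison showing that a contractive $(d,\cG,\delta)$-decomposable system $(F,\psi,\varphi)$ yields an $(\cF,\eps)$-quasidiagonal system for $\tau$ on the same $F$, with the trace $\tau_F=(\tau\circ\varphi)/\tau(\varphi(1_F))$ tracial because $\tau\circ\varphi$ is a sum of $d+1$ positive tracial functionals (the paper cites \cite[Corollary 4.4]{WinterZachOrderZero} for exactly this). However, the step you yourself flag as the main obstacle --- producing a cpc map $A\to F$ that is approximately multiplicative in \emph{operator norm} with controlled rank --- is where your argument has a genuine gap. Your proposed fix (pass to the tracial ultrapower, round the order zero maps to $^*$-homomorphisms modulo tracial error, then lift by a ``ucp-selection / reindexation argument'') is not carried out, and it runs head-on into the problem the paper explicitly warns about in the remark following Theorem~\ref{thm: VariationalQDTraces}: reindexation arguments (Kirchberg's $\eps$-test and its relatives) alter the finite-dimensional algebras appearing in the approximations, so the rank bookkeeping that the entropy comparison depends on is precisely what such arguments destroy. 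Moreover, a tracial (2-norm) rounding would at best give multiplicativity in $2$-norm, whereas Definition~\ref{defn: TraceEntropy} demands operator-norm approximate multiplicativity; you do not explain how the ultrapower step recovers that.

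The paper's route is both more elementary and avoids the issue entirely: after unitising $\psi$ via \cite[Lemma 2.2.5]{BrownOzawa08} (which does not change $F$), it invokes the proof of \cite[Proposition 5.1]{KirchbergWinterDecompRank} --- the classical argument that finite decomposition rank forces quasidiagonality --- to \emph{delete summands} of $F$ on which $\psi$ fails to be approximately multiplicative. The contractivity of $\varphi$ (this is where decomposition rank rather than nuclear dimension is used) guarantees that on the surviving summands $\psi$ is approximately multiplicative in operator norm, and since one only removes blocks, $\rk$ can only decrease. The downward map of the quasidiagonal system is then just $\psi$ itself; no new map $\tilde\varphi$, no ultrapower, and no reindexation are needed. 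If you replace your third paragraph by this summand-deletion argument (as in \cite[Proposition 8.5]{BBSTW19}), the rest of your proof goes through essentially as you wrote it.
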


\begin{proof}
Let $d\in\mathbb{Z}_+$ be the decomposition rank of $A$. We will show that any trace on $A$ is quasidiagonal (see \cite[Proposition 8.5]{BBSTW19}) while keeping track of the rank of the finite-dimensional $\C$-algebras used in the approximations. Let $1_A\in \cF\subseteq A$ be finite, $0<\eps<1$, $n\in\mathbb{N}$, and $(F,\psi,\varphi)$ be a contractive $(d,\cF\cup\alpha(\cF)\cup\ldots\cup\alpha^{n-1}(\cF),\eps/3)$-decomposable system for $A$. Since $A$ is unital, we can assume that $\psi$ is unital without altering the finite-dimensional $\C$-algebra $F$ (see, for example, \cite[Lemma 2.2.5]{BrownOzawa08}). Moreover, following the proof of \cite[Proposition 5.1]{KirchbergWinterDecompRank}, we see that by possibly removing some summands of $F$, one can further assume that $\psi$ is unital and approximately multiplicative on $\cF$ up to $\eps/3$. In fact, these observations show that we could have defined $\h_\dr(\alpha)$ by further assuming that the downward approximating maps are unital and approximately multiplicative.

Let $\tau\in T(A)$ and $\sigma=\tau\circ\varphi$. Since $\varphi$ is a sum of $d+1$ cpc order zero maps, $\sigma$ is a sum of $d+1$ positive tracial functionals (\cite[Corollary 4.4]{WinterZachOrderZero}). However, $1_A\in \cF$ and $\psi$ is unital, so $\|\varphi(1_F)-1_A\|\leq\eps/3$, which in turn yields $\|\sigma(1_F)-1\|\leq\eps/3$. Consider $\tilde{\sigma}=\frac{\sigma}{\|\sigma\|}$, which is a tracial state on $F$.
Since $\|\sigma\|\in[1-\eps/3,1]$, we get that for any $a\in\cF\cup\alpha(\cF)\cup\ldots\cup\alpha^{n-1}(\cF)$
\begin{align*}
|\tilde{\sigma}(\psi(a))-\tau(a)| &\leq \frac{1}{\|\sigma\|}|\tau(\varphi(\psi(a)))-\tau(a)|+\left(\frac{1}{\|\sigma\|}-1\right)|\tau(a)|\\
&\leq \frac{2\eps/3}{1-\eps/3}=\frac{2\eps}{3-\eps} <\eps.
\end{align*}
This shows that $(F,\tilde{\sigma},\psi)$ is an $(\cF\cup\alpha(\cF)\cup\ldots\cup\alpha^{n-1}(\cF),\eps)$-quasidiagonal system for $\tau$, and hence \[r_\dr(d,\cF,\eps/3)\geq \rqd_\tau(\cF,\eps).\]Since $\cF$ and $\eps$ were arbitrary, it follows that $\h_\dr(\alpha)\geq \hqd_\tau(\alpha)$.
\end{proof}


In the following theorem, $\Z$ stands for the Jiang-Su algebra introduced in \cite{JiangSu99}. Moreover, a $\C$-algebra $A$ is said to be $\Z$-stable if $A\cong A\otimes\Z$.
Before proving the theorem, let us record a particular case of \cite[Theorem~5.5]{BBSTW19}, recasting it from a sequential version to a local approximation version.

\begin{theorem}[cf. \ {\cite[Theorem~5.5]{BBSTW19}}]\label{thm: RecastingBBSTW}
Let A be a simple, separable, unital, finite, nuclear, $\mathcal{Z}$-stable $\C$-algebra.
Let $\omega$ be a fixed free ultrafilter on $\mathbb{N}$, $\phi_1\colon A\to A_\omega$ be a unital $*$-homomorphism induced by a sequence of maps $(\phi_1^{(k)})_{k\in\mathbb{N}}$ and $\phi_2\colon A\to A_\omega$ be a cpc order zero map induced by a sequence of maps $(\phi_2^{(k)})_{k\in\mathbb{N}}$ such that \[\tau\circ\phi_1=\tau\circ\phi_2^m, \ \tau\in T(A_\omega), \ m\in\mathbb{N},\]
where order zero functional calculus is used to interpret $\phi_2^m$.\footnote{Note that since $A$ is simple and $\phi_1$ is unital, $\phi_1$ is totally full in the sense of \cite[Definition~1.1]{BBSTW19}. In particular, the statement above is indeed a particular case of \cite[Theorem~5.5]{BBSTW19}.}
Let $h\in\mathcal{Z}_+$ be a positive contraction with spectrum $[0,1]$.
Then there exists a sequence of unitaries $(u_n)_{n\in\mathbb{N}}\subset A\otimes\mathcal{Z}$ such that for any $\varepsilon>0$ and any finite set $\cF$ of $A$, there exists $k\in\mathbb{N}$ such that \[\|(\phi_1^{(n)}(a)\otimes h) -u_n(\phi_2^{(n)}(a)\otimes h)u_n^*\|\leq\varepsilon,\] for all $a\in\cF$ and any $n\geq k$.
\end{theorem}

\begin{theorem}\label{thm: VariationalQDTraces}
Let $A$ be a simple, separable, unital, nuclear, $\mathcal{Z}$-stable $\C$-algebra with finitely many extremal traces and which satisfies the UCT. Suppose further that $A$ is not an AF-algebra. If $\alpha$ is an endomorphism of $A$, then \[\h_\dr(\alpha)=\sup_{\tau\in\partial_e T(A)}\hqd_\tau(\alpha)=\sup_{\tau\in T(A)}\hqd_\tau(\alpha).\]
\end{theorem}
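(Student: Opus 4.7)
The plan is to establish the inequality
\[
\h_\dr(\alpha) \le \sup_{\tau\in\partial_e T(A)}\hqd_\tau(\alpha),
\]
as the reverse chain of inequalities follows from Proposition~\ref{prop: LowerBddQD} (applied to each $\tau\in T(A)$) combined with the trivial inclusion $\partial_e T(A)\subseteq T(A)$. Under the stated hypotheses, $T(A)$ is automatically a Bauer simplex (the extreme boundary being finite, hence closed), so the combination of \cite[Theorem A]{TikuisisWhiteWinter17} with \cite[Theorem 7.5]{BBSTW19} gives $\dr(A)=1$ (using also that $A$ is not AF), and every trace on $A$ is quasidiagonal. We therefore work with $d=1$ in Definition~\ref{defn: NucDimAndDrEntropy}, and $\hqd_\tau(\alpha)$ is well defined for every $\tau\in T(A)$.

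The strategy for the non-trivial inequality is to revisit the proof of \cite[Theorem 7.5]{BBSTW19}, which constructs two-coloured contractive decomposable systems for $A$ by gluing together individual quasidiagonal approximations at each extremal trace, while keeping careful track of the ranks of the finite-dimensional $\C$-algebras that arise. Enumerate $\partial_e T(A)=\{\tau_1,\ldots,\tau_k\}$ and fix a finite set $\cF\subseteq A$ containing $1_A$, $\eps>0$, $n\in\N$, and an auxiliary parameter $\eps'>0$ chosen to govern the perturbation estimates in the BBSTW construction, with $\eps'\to0$ as $\eps\to0$. Set $\cF_n=\cF\cup\alpha(\cF)\cup\cdots\cup\alpha^{n-1}(\cF)$. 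For each $i$, choose an $(\cF_n,\eps')$-quasidiagonal system $(F_i^{(n)},\tau_{F_i}^{(n)},\varphi_i^{(n)})$ for $\tau_i$ with $\rk(F_i^{(n)})\le 2\rqd_{\tau_i}(\cF_n,\eps')$, arranged if necessary to be unital as in the proof of Proposition~\ref{prop: LowerBddQD}. The key claim is that plugging these data into the BBSTW construction (which uses $\Z$-stability and cpc order-zero lifts to combine trace-specific quasidiagonal approximations into a global decomposable system) yields a contractive $(1,\cF_n,\eps)$-decomposable system $(F^{(n)},\psi^{(n)},\varphi^{(n)})$ for $A$ whose finite-dimensional algebra $F^{(n)}$ is assembled from $F_1^{(n)},\ldots,F_k^{(n)}$ by bounded amplification and direct summation, with
\[
\rk(F^{(n)})\le Ck\cdot\max_{1\le i\le k}\rk(F_i^{(n)})
\]
for some constant $C=C(\eps,\eps')\ge1$ independent of $n$. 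Taking $\frac{1}{n}\log$ and $\limsup_{n\to\infty}$, using that $\limsup$ commutes with finite maxima and absorbs multiplicative constants, we obtain
\[
\h_\dr(\alpha,\cF,\eps)\le\max_{1\le i\le k}\hqd_{\tau_i}(\alpha,\cF,\eps')\le\sup_{\tau\in\partial_e T(A)}\hqd_\tau(\alpha),
\]
and the desired bound follows on taking the supremum over $\cF$ and letting $\eps\to 0$.

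The principal obstacle is that the proof of \cite[Theorem 7.5]{BBSTW19} is existential in character: it works inside ultrapowers, lifts classes of suitable projections in matrix amplifications of $A_\omega$ to cpc order-zero maps, and exploits $\Z$-stability in essential ways. One must verify that, at each step of the construction, the finite-dimensional algebras which genuinely enter reduce to combinations of the prescribed $F_i^{(n)}$, up to additional summands and amplifications whose size is controlled by $\eps$ and $\eps'$ but not by $n$. This amounts to reversing the rank bookkeeping performed in the proof of Proposition~\ref{prop: LowerBddQD}, and is considerably more delicate since we are now assembling the decomposable system rather than extracting quasidiagonal data from it.
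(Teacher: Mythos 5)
Your overall strategy is exactly the paper's: reduce to the single inequality $\h_\dr(\alpha)\le\sup_{\tau\in\partial_e T(A)}\hqd_\tau(\alpha)$ via Proposition~\ref{prop: LowerBddQD}, obtain $\dr(A)=1$ from \cite{TikuisisWhiteWinter17} and the non-AF hypothesis, and then rerun the construction of \cite[Theorem 7.5]{BBSTW19} while tracking the ranks of the finite-dimensional algebras. The form of your rank bound, a constant multiple (independent of $n$) of $\max_i\rk(F_i^{(n)})$, is also the right one and matches the paper's $2m\max_i\rk(F_{i,k})$.

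However, the proposal has a genuine gap: the entire content of the proof is the ``key claim'' that the BBSTW construction outputs a contractive $(1,\cF_n,\eps)$-decomposable system whose finite-dimensional algebra is assembled from the prescribed $F_i^{(n)}$ with controlled rank, and you explicitly leave this unverified, naming it yourself as the principal obstacle. This is not a routine verification that can be waved through: as the remark following the theorem in the paper points out, the general proofs of \cite[Lemma 7.4]{BBSTW19} and \cite[Lemma 5.2]{CPOU21} reindex via Kirchberg's $\eps$-test inside ultrapowers, and reindexing destroys any control over which finite-dimensional algebras appear. The reason the theorem is stated only for finitely many extremal traces is precisely that in this case the construction can be made explicit without reindexing: since $C(\partial_e T(A))\cong\cc^m$, one writes down pairwise orthogonal positive contractions $(d_{i,k})_k$ detecting the extremal traces, takes cpc order zero maps $\psi_k\colon\mathcal{Q}\to\Z$ from \cite[Lemma 6.1]{SatoWhiteWinter15}, and sets $F_k=\bigoplus_{i=1}^m F_{i,k}$, $\theta_k=\bigoplus_i\theta_{i,k}$ and $\tilde\eta_k(y_1,\dots,y_m)=\sum_i d_{i,k}\otimes\psi_k(y_i)$; the two colours then come from applying a local version of \cite[Theorem 5.5]{BBSTW19} twice, with $h$ and with $1_\Z-h$, producing a system on $F_k\oplus F_k$ conjugated by two unitaries. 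Your proposal gestures at ``bounded amplification and direct summation'' but never exhibits these maps, so the step on which everything rests is asserted rather than proved. To complete the argument you would need to carry out this explicit assembly (or an equivalent one) and check that the downward map $\theta_k\oplus\theta_k$ together with the rescaled two-coloured upward map genuinely forms a contractive $(1,\cF_n,2\eps)$-decomposable system.
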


\begin{proof}
Combining \cite[Theorem I, Theorem H]{CPOU21} with \cite[Theorem A]{TikuisisWhiteWinter17}, we get that $A$ has decomposition rank at most $1$. Since $A$ is not AF, it follows that $\dr(A)=1$ (\cite[Example 4.1]{KirchbergWinterDecompRank}). If we show the first equality, combining it with Proposition~\ref{prop: LowerBddQD}, the second equality in the statement of the theorem follows. Thus, using Proposition~\ref{prop: LowerBddQD} again, it remains to show that \[\h_\dr(\alpha)\leq \sup_{\tau\in\partial_e T(A)}\hqd_\tau(\alpha).\] We will follow the strategy in \cite[Lemma 7.4, Theorem 7.5]{BBSTW19}, where the quasidiagonal approximations of the finitely many extremal traces are used to produce a contractive decomposable system for $A$. Let $\tau_1,\ldots,\tau_m$ be the extremal traces of $A$. Then each $\tau_i$ is quasidiagonal by \cite[Theorem A]{TikuisisWhiteWinter17}, so let $(F_{i,k},\tau_{i,k},\theta_{i,k})_{k\in\mathbb{N}}$ be a quasidiagonal system for $\tau_i$. Since $A$ is unital, we can assume that each map $\theta_{i,k}$ is unital. In fact, we can choose the sequence $(F_{i,k},\tau_{i,k},\theta_{i,k})_{k\in\mathbb{N}}$ in a more precise way. Let $(\cG_k)_{k\in\mathbb{N}}$ be an increasing sequence of finite sets whose union is dense in $A$, and $n\in\mathbb{N}$. Then for each $k\in\mathbb{N}$, we choose $(F_{i,k},\tau_{i,k},\theta_{i,k})$ to be a $(\cG_k\cup\alpha(\cG_k)\cup\ldots\cup\alpha^{n-1}(\cG_k),1/k)$-quasidiagonal system for $\tau_i$.

Let $\omega$ be a fixed free ultrafilter on $\mathbb{N}$. Following \cite[Lemma 7.4]{BBSTW19}, we will first find a sequence of finite-dimensional $\C$-algebra $F_k$, together with sequences of cpc maps $\theta_k\colon A\to F_k$ and $\eta_k\colon F_k\to A$ such that $\eta_k$ is order zero, 
\begin{equation}\label{eq: Approx1}
\lim_{k\to\omega}\|\theta_k(x)\theta_k(y)\|=0,\ \text{if}\ xy=0,
\end{equation}and if $\Phi\colon A\to A_\omega$ is the map induced by the sequence $\eta_k\circ\theta_k$, then
\begin{equation}\label{eq: Approx2}
\tau_i\circ\Phi=\tau_i, \ 1\leq i\leq m.
\end{equation} Here we abuse notation and we also denote the unique extension of each $\tau_i$ to $A_\omega$ by $\tau_i$.

Noticing that $C(\partial_e T(A))\cong \mathbb{C}^m$, as in the proof of \cite[Lemma 7.4]{BBSTW19}, using the functions $f_i(\tau_j)=\delta_{i,j}$ for $1\leq i,j\leq m$, we can find pairwise orthogonal sequences of positive contractions $(d_{1,k})_{k=1}^\infty,\ldots,(d_{m,k})_{k=1}^\infty\in \ell^\infty(A)$ such that
\begin{equation}\label{eq: Bauer3}
\lim_{k\to\omega}\max_{1\leq j\leq m}|\tau_j(d_{i,k})-\delta_{i,j}|=0,\ 1\leq i\leq m.
\end{equation}

We need one more ingredient to obtain a sequence of maps $\theta_k$ satisfying \eqref{eq: Approx1}. Let $\mathcal{Q}$ be the universal UHF-algebra. Then let $\Theta_i\colon A\to \mathcal{Q}_\omega$ be the $^*$-homomorphism induced by the quasidiagonal system $(F_{i,k},\tau_{i,k},\theta_{i,k})_{k\in\mathbb{N}}$. As in \cite[Lemma 7.4]{BBSTW19}, we can use \cite[Lemma 6.1]{SatoWhiteWinter15} to produce a sequence of cpc order zero maps $\psi_k\colon\mathcal{Q}\to\mathcal{Z}$ such that
\begin{equation}\label{eq: Bauer4}
\tau_{\mathcal{Z}_\omega}\circ\Psi\circ\Theta_i=\tau_i, \ 1\leq i\leq m,
\end{equation}where $\Psi$ is the cpc order zero map induced by the sequence $\psi_k$.

We now let $F_k=\bigoplus_{i=1}^mF_{i,k}$, $\theta_k=\bigoplus_{i=1}^m\theta_{i,k}\colon A\to F_k$ as given by the chosen quasidiagonal systems, and $\tilde{\eta}_k\colon F_k\to A\otimes\mathcal{Z}$ defined by
\begin{equation}\label{eq: Bauer6}
\tilde{\eta}_k(y_1,\ldots,y_m)=\sum_{i=1}^m d_{i,k}\otimes\psi_k(y_i),\ y_i\in F_{i,k}.    
\end{equation}
Since the sequence $\theta_{i,k}$ yields a quasidiagonal system for $\tau_i$, the map $\theta_k$ is unital and it satisfies \eqref{eq: Approx1}. Moreover, since $\psi_k$ is cpc order zero and the elements $d_{1,k},\ldots, d_{m,k}$ are pairwise orthogonal, it follows that $\tilde{\eta}_k$ is a cpc order zero map. To obtain maps $\eta_k$ satisfying \eqref{eq: Approx2}, we compose $\tilde{\eta}_k$ with an isomorphism $\beta\colon A\otimes\mathcal{Z}\to A$ such that $\tau\circ\beta=\tau\otimes\tau_{\mathcal{Z}}$ for any $\tau\in T(A)$. Consider the order zero map $\Phi$ induced by the sequence $\eta_k\circ\theta_k$.

By \eqref{eq: Bauer3} and \eqref{eq: Bauer4}, it follows that $\Phi$ satisfies \eqref{eq: Approx2}. Precisely, for each extremal trace $\tau_j$,
\begin{align*}
\tau_j\circ\eta_k\circ\theta_k &=(\tau_j\otimes\tau_{\mathcal{Z}})\left(\sum_{i=1}^m d_{i,k}\otimes(\psi_k\circ\theta_{i,k})\right)\\
&= \sum_{i=1}^m\tau_j(d_{i,k})(\tau_{\mathcal{Z}}\circ\psi_k\circ\theta_{i,k}).
\end{align*}Sending $k\to\omega$, it is immediate that \eqref{eq: Bauer3} and \eqref{eq: Bauer4} yield the desired equality.

We can now keep track of the sequence of finite-dimensional $\C$-algebras $F_k$ in the proof of \cite[Theorem 7.5]{BBSTW19}. Let $h$ be a positive contraction in $\mathcal{Z}$ which has spectrum $[0, 1]$, $\cF\subseteq A$ be finite, $\eps>0$, and $n\in\mathbb{N}$. Write $M=\max\{\|a\|\mid a\in\cF\}+\eps+1$. Then there exists $N\in\mathbb{N}$ such that $\cF$ is $\eps/5M$-contained in $\cG_k$ and $1/k\leq \eps/5M$ for any $k\geq N$. In particular, $(F_{i,k},\tau_{i,k},\theta_{i,k})$ is an $(\cF\cup\alpha(\cF)\cup\ldots\cup\alpha^{n-1}(\cF),\eps)$-quasidiagonal system for $\tau_i$ for each $1\leq i\leq m$ and any $k\geq N$.

Then, we can apply Theorem~\ref{thm: RecastingBBSTW} to $\Phi$ and the canonical diagonal inclusion $A\hookrightarrow A_\omega$, once with $h$ and once with $1_{\mathcal{Z}}- h$ to get unitaries
$u^{(0)}, u^{(1)}\in A \otimes \mathcal{Z}$ and $k\in\mathbb{N}$, possibly greater than $N$, such that for all $x \in \cF\cup\alpha(\cF)\cup\ldots\cup\alpha^{n-1}(\cF)$, we have
\begin{equation}\label{eq: CPOU4}
\|x \otimes 1_{\mathcal{Z}} -
(u^{(0)}(\eta_k\circ\theta_k(x) \otimes h)u^{(0)*} + u^{(1)}(\eta_k\circ\theta_k(x) \otimes (1_{\mathcal{Z}}- h))u^{(1)*})\|\leq\eps.
\end{equation} 

We are now in the position to produce a decomposition rank approximation for the map $\id_A\otimes 1\colon A\to A\otimes\mathcal{Z}$. Consider the cpc map $\theta_k\oplus\theta_k\colon A\to F_k\oplus F_k$ and $\psi_k\colon F_k\oplus F_k\to A\otimes\mathcal{Z}$ given by \[\psi_k(x,y)=u^{(0)}(\eta_k(x) \otimes h)u^{(0)*} + u^{(1)}(\eta_k(y) \otimes (1_{\mathcal{Z}}- h))u^{(1)*}.\]Note that $\psi_k$ is a sum of two cpc order zero maps. Moreover, since $\theta_k$ is unital, $\psi_k(1_F\oplus 1_F)$ approximates $1_A\otimes 1_{\mathcal{Z}}$. Therefore $\psi_k$ is approximately contractive, so it can be made contractive by rescaling. Since $\mathcal{Z}$ is strongly self-absorbing and $A$ is $\mathcal{Z}$-stable, one can further compose with an isomorphism from $A\otimes \mathcal{Z}$ to $A$ to obtain a finite decomposition rank approximation for $A$. More precisely, by \cite[Lemma 4.4]{Rørdam04} there exists an isomorphism $\Xi\colon A\otimes\mathcal{Z}\to A$ such that
\begin{equation}\label{eq: Approx4}
\|x-\Xi(x\otimes 1_{\mathcal{Z}})\|\leq\eps, \ x\in\cF\cup\alpha(\cF)\cup\ldots\cup\alpha^{n-1}(\cF).
\end{equation}Then, composing $\psi_k$ with $\Xi$ and using \eqref{eq: CPOU4}, we obtain a finite decomposition rank approximation for $A$ on the finite set $\cF\cup\alpha(\cF)\cup\ldots\cup\alpha^{n-1}(\cF)$, up to $2\eps$.

All in all, this shows that \[r_\dr\left (1,\bigcup_{j=0}^{n-1}\alpha^j(\cF),2\eps\right)\leq2\cdot\mathrm{rank}(F_k)\leq 2m\max_{1\leq i\leq m}\mathrm{rank}(F_{i,k}).\] Therefore, we get
\[r_\dr\left(1,\bigcup_{j=0}^{n-1}\alpha^j(\cF),2\eps\right)\leq 2m\max_{1\leq i\leq m}\rqd_{\tau_i}\left(\bigcup_{j=0}^{n-1}\alpha^j(\cG_k),1/k\right),\]which yields that
\[\h_\dr(\alpha,\cF,2\eps)\leq \max_{1\leq i\leq m}\hqd_{\tau_i}(\alpha,\cG_k,1/k)\leq\max_{1\leq i\leq m}\hqd_{\tau_i}(\alpha).\] Since $\cF$ and $\eps$ were arbitrary, we get that \[\h_\dr(\alpha)\leq\max_{1\leq i\leq m}\hqd_{\tau_i}(\alpha),\]which finishes the proof. 
\end{proof}

\begin{rmk}
Note that one cannot use the strategy in \cite[Lemma 7.4]{BBSTW19} or \cite[Lemma 5.2]{CPOU21} to extend the result in Theorem \ref{thm: VariationalQDTraces} to more general trace simplices. The reason is that in both situations, one ends up using Kirchberg's $\eps$-test to reindex. This alters the finite-dimensional $\C$-algebras in the desired cpc approximations. 
\end{rmk}

\begin{rmk}\label{rmk: DiffDefEntropy}
To avoid making the extra assumption that the $\C$-algebra is not AF, one could modify the definition of the decomposition rank entropy by considering a quantity $\h_{\dr,d}$ for each $d$ such that $d\geq \dr(A)$. Precisely, if $d\geq \dr(A)$, one can find $(d+1)$-coloured contractive decomposable systems, so $\h_{\dr,d}$ can be defined exactly in the same fashion as the contractive coloured entropy using these systems. With this modification, Theorem~\ref{thm: VariationalQDTraces} shows that \[\h_{\dr,1}(\alpha)=\sup_{\tau\in\partial_e T(A)}\hqd_\tau(\alpha)=\sup_{\tau\in T(A)}\hqd_\tau(\alpha)\] for each classifiable $\C$-algebra $A$ with finitely many extremal traces, and any endomorphism $\alpha$ of $A$.

In general, if $\alpha$ is an endomorphism of a unital $\C$-algebra $A$ and $d\ge\dr(A)$, then $\h(\alpha) \le \h_{\dr,d+1}(\alpha) \le \h_{\dr,d} \le \h_\dr(\alpha)$. Therefore, these numbers all coincide whenever $\h(\alpha)=\h_\dr(\alpha)$ (which is the case, for example, in Proposition~\ref{prop: Shift}, Theorem~\ref{thm: CommCase}, Corollary~\ref{cor: DenseZeroEntropy}, Theorem~\ref{thm: DenseInfEntropy} and Theorem~\ref{thm: GenericInfEntropy}). Note finally that a similar modification could be made to the definition of $\h_\nuc$, and similar comments would apply.
\end{rmk}

\section{Typical values of entropy} \label{sect: TypicalEntropy}

In this section, we investigate typical values of noncommutative entropy among endomorphisms and automorphisms of $\Z$-stable $\C$-algebras, particularly those that are classifiable. By `typical' we mean relative to the topology of pointwise convergence. More precisely, for a separable, unital $\C$-algebra $A$ we equip $\End(A)$ with the completely metrisable topology provided by the family of pseudometrics
\[
\{d_{\cF}(\alpha,\beta) \mid \cF\subseteq A \text{ finite}\},
\]
where
\[
d_{\cF}(\alpha,\beta)=\max_{a\in \cF}\|\alpha(a)-\beta(a)\|.
\]
To ensure that the topology on $\Aut(A)$ is also completely metrisable, we use the family
\[
\{d_{\cF}(\alpha,\beta)+d'_{\cF}(\alpha^{-1},\beta^{-1}) \mid \cF\subseteq A \text{ finite}\},
\]
where
\[
d'_{\cF}(\varphi,\psi) = \inf_{u\in\cU(A)}\max_{a\in \cF}\|u\varphi(a)u^*-\psi(a)\|.
\]
In the commutative setting, complete metrics for these topologies are given by
\[
\rho(g,h) = \sup_{x\in X}d(g(x),h(x))
\]
in the space of continuous maps from a compact metric space $(X,d)$ to itself, and by
\[
\rho'(g,h) = \sup_{x\in X}d(g(x),h(x))+d(g^{-1}(x),h^{-1}(x))
\]
in the space $\home(X)$ of self-homeomorphisms of $(X,d)$.

First, we examine the prevalence of zero entropy among automorphisms of $\C$-algebras $A$ of real rank zero (as defined in \cite{BrownPedersen91}). We write $\cU_0(A)$ for the connected component of the identity in the unitary group $\cU(A)$ of $A$, $\mathrm{Inn}_0(A)$ for the corresponding set $\{\Ad_u \mid u\in\cU_0(A)\}$ of inner automorphisms and $\overline{\mathrm{Inn}_0(A)}$ for the closure of $\mathrm{Inn}_0(A)$ in $\Aut(A)$.

\begin{prop} \label{prop: RR0}
Let $A$ be a unital, exact $\C$-algebra of real rank zero. Then, $\h=0$ on a dense subset of $\overline{\mathrm{Inn}_0(A)}$. The same is true for $\h_\nuc$ if $\dimnuc(A)<\infty$, and for $\h_\dr$ if $\dr(A)<\infty$.
\end{prop}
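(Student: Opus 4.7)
The plan is to exhibit a dense subset of $\overline{\Inn_0(A)}$ on which all three entropies vanish, namely the inner automorphisms coming from unitaries in $\cU_0(A)$ with finite spectrum. I would first show that such unitaries are norm-dense in $\cU_0(A)$, and then establish that $\h(\Ad_v)=\h_\nuc(\Ad_v)=\h_\dr(\Ad_v)=0$ whenever $v\in\cU(A)$ has finite spectrum; continuity of $w\mapsto\Ad_w$ from $\cU(A)$ to $\End(A)$ (and to $\Aut(A)$ in the finer topology, since this applies equally well to inverses) together with openness of $\cU_0(A)$ in $\cU(A)$ then delivers density of the candidate set inside $\Inn_0(A)$ and hence inside its closure.

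For the density of finite-spectrum unitaries in $\cU_0(A)$, the essential input is real rank zero. I would invoke H.\ Lin's approximation theorem: in a $C^*$-algebra of real rank zero, every normal element is a norm limit of normal elements with finite spectrum. Applied to a unitary $u$, this gives a finite-spectrum normal $y$ near $u$; a small spectral-projection perturbation onto the unit circle (i.e.\ replacing each nonzero eigenvalue $\mu_k$ of $y$ by $\mu_k/|\mu_k|$) produces a finite-spectrum unitary $v$ arbitrarily close to $u$, which moreover lies in $\cU_0(A)$ as soon as it is close enough to $u$.

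For the entropy vanishing, suppose $v=\sum_{k=1}^{K}\lambda_k p_k$ with $\{p_k\}$ orthogonal projections summing to $1_A$ and $\lambda_k\in\mathbb{T}$. Then $v^n=\sum_k \lambda_k^n p_k$ sits in the finite-dimensional abelian subalgebra spanned by $\{p_1,\dots,p_K\}$, so $\{v^n:n\in\zz\}$ is relatively norm-compact, and consequently the orbit $\bigcup_{n\in\zz}\Ad_v^n(\cF)$ of any finite set $\cF\subseteq A$ is relatively compact. Given $\eps>0$, pick a finite $\eps/2$-net $\cE$ of this orbit. A single $(\cE,\eps/2)$-completely positive system (respectively $(d,\cE,\eps/2)$- or contractive $(d,\cE,\eps/2)$-decomposable system) then approximates every element of $\bigcup_{j=0}^{n-1}\Ad_v^j(\cF)$ to within $\eps$, for all $n\in\N$ simultaneously. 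Hence $\rcp$, $r_\nuc$ and $r_\dr$ of $\bigcup_{j=0}^{n-1}\Ad_v^j(\cF)$ at scale $\eps$ are all bounded independently of $n$, forcing the $\limsup$-type quantities $\h(\Ad_v,\cF,\eps)$, $\h_\nuc(\Ad_v,\cF,\eps)$, $\h_\dr(\Ad_v,\cF,\eps)$ to vanish. Taking suprema over $\cF$ and $\eps$ gives the three entropy equalities.

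The main obstacle is the density statement rather than the entropy calculation, which is essentially formal once finite spectrum is in hand. Notably, the naive approach of decomposing $u\in\cU_0(A)$ as a finite product $e^{ih_1}\cdots e^{ih_N}$ and approximating each $h_j$ by a self-adjoint with finite spectrum only yields a product of finite-spectrum unitaries, and such a product need not have finite spectrum; this is why the full strength of Lin's theorem on normal elements in real rank zero $C^*$-algebras is required.
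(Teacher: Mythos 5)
Your proposal is correct and follows essentially the same route as the paper: approximate $u\in\cU_0(A)$ by a finite-spectrum unitary $v\in\cU_0(A)$ using Lin's real-rank-zero result, observe that $w\mapsto\Ad_w$ is continuous (for both $d_\cF$ and $d'_\cF$), and conclude that $\Ad_v$ has zero entropy in all three senses. The paper additionally arranges the spectrum of $v$ to consist of rational points of the circle so that $\Ad_v$ is periodic; your compactness-of-orbit argument for arbitrary finite spectrum is an equally valid (and marginally more general) way to bound the ranks independently of $n$, modulo the routine adjustment of the net scale to account for $\|\varphi\|\le d+1$ in the $r_\nuc$ case. One caveat: the blanket statement you attribute to Lin --- that in a real rank zero $\C$-algebra \emph{every} normal element is a norm limit of normal elements with finite spectrum --- is false in general, as there are $K$-theoretic obstructions (a unitary with nontrivial class in $K_1$, e.g.\ a generator of an irrational rotation algebra, cannot be so approximated). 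The case you actually use, namely unitaries in $\cU_0(A)$, is precisely the content of \cite[Corollary 6]{Lin93}, which the paper cites directly and which makes your detour through finite-spectrum normals unnecessary.
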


\begin{proof}
Let $u\in\cU_0(A)$, $\cF\subseteq A$ be finite and $\eps>0$. Write $M=1+\max\{\|a\| \mid a\in \cF\}$. Since $A$ has real rank zero, there exists $v\in\cU_0(A)$ with finite spectrum (which we may assume consists of rational points on the circle) such that $\|u-v\|<\frac{\eps}{4M}$ (see \cite[Corollary 6]{Lin93}). Then,
\begin{align*}
d_{\cF}(\Ad_u,\Ad_v)+d'_{\cF}(\Ad_u^{-1},\Ad_v^{-1}) &\le \max_{a\in \cF}(\|uau^*-vav^*\| +\|u^*au-v^*av\|)\\
&<\eps
\end{align*}
and $\Ad_v$ has zero entropy (in any of the three senses, as appropriate) by virtue of being periodic.
\end{proof}

If $A$ is a UHF-algebra or the Cuntz algebra $\cO_2$, then the real rank of $A$ is zero and $\overline{\mathrm{Inn}_0(A)}=\Aut(A)$. We therefore have the following consequence of Proposition~\ref{prop: RR0} (cf.\ \cite[Proposition 8.7]{KerrLi05}).

\begin{cor} \label{cor: DenseZeroEntropy}
If $A$ is a UHF-algebra or the Cuntz algebra $\cO_2$, then there is a dense subset of $\Aut(A)$ on which $\h_\dr=\h_\nuc=\h=0$ in the former case, and $\h_\nuc=\h=0$ in the latter.
\end{cor}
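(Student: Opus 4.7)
The plan is to reduce the corollary directly to Proposition~\ref{prop: RR0} by verifying the three hypotheses needed for each algebra: real rank zero, density of $\overline{\mathrm{Inn}_0(A)}$ in $\Aut(A)$, and finiteness of the appropriate dimension. The main ingredients are classical, so the bulk of the work is assembling the correct citations and keeping track of which of the three entropies can be treated in each case.

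First I would recall that every UHF-algebra is AF, hence has real rank zero and decomposition rank (and nuclear dimension) equal to zero. For $\cO_2$, real rank zero follows from the fact that purely infinite simple $\mathrm{C}^*$-algebras have real rank zero (Zhang), and $\dimnuc(\cO_2)=1$ by \cite{WinterZachNucDim}. Note that $\dr(\cO_2)=\infty$ because $\cO_2$ is not stably finite; this is exactly the reason why $\h_\dr$ does not appear in the statement for $\cO_2$.

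Next I would verify that $\overline{\mathrm{Inn}_0(A)}=\Aut(A)$ in both cases. Since the algebras in question are strongly self-absorbing with trivial $K_1$, we have $\cU(A)=\cU_0(A)$, so $\mathrm{Inn}_0(A)=\mathrm{Inn}(A)$. The density of $\mathrm{Inn}(A)$ in $\Aut(A)$ for UHF-algebras is a classical consequence of the classification of automorphisms (every automorphism of a UHF-algebra is approximately inner), and for $\cO_2$ it follows from the corresponding fact for Kirchberg algebras with trivial $K$-theory (or directly from classification of automorphisms of $\cO_2$ due to Nakamura/R{\o}rdam).

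With these facts in hand, Proposition~\ref{prop: RR0} immediately provides a dense subset of $\Aut(A)$ on which $\h=0$ and on which $\h_\nuc=0$ (since $\dimnuc(A)<\infty$ in both cases), and additionally $\h_\dr=0$ in the UHF case (since $\dr(A)<\infty$). I do not anticipate a serious obstacle; the only conceptual point worth double-checking is that the periodic automorphisms $\Ad_v$ produced in Proposition~\ref{prop: RR0} genuinely have zero contractive coloured entropy in the AF setting, but this is evident from Proposition~\ref{prop: AFEntropy} together with the standard fact that periodic automorphisms have zero topological approximation entropy. The statement about $\cO_2$ simply records the cleanest consequence we obtain without the decomposition rank hypothesis.
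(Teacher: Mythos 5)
Your proposal is correct and follows essentially the same route as the paper, which simply notes that both algebras have real rank zero and satisfy $\overline{\mathrm{Inn}_0(A)}=\Aut(A)$ and then invokes Proposition~\ref{prop: RR0}; your extra verifications (connectedness of the unitary group via trivial $K_1$, approximate innerness of all automorphisms, $\dr(\cO_2)=\infty$ versus $\dimnuc(\cO_2)=1$) are exactly the facts implicitly used. The point you flag about periodic automorphisms is already absorbed into the statement of Proposition~\ref{prop: RR0}, so no further argument is needed.
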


In \cite{Kerr07}, Kerr asserts that infinite Brown--Voiculescu entropy occurs on a dense subset of $\Aut(A)$ for any unital, $\Z$-stable $\C$-algebra $A$. By Proposition~\ref{prop: IneqBrownVoicEntr}, it follows that the (contractive) coloured entropy exhibits the same behaviour. Kerr's observation is proved in exactly the same way as \cite[Proposition 8.9]{KerrLi05}, which is devoted to the `contractive approximation entropy' developed in that article. For completeness, we recall the full argument here. As a starting point, we need some way of establishing lower bounds for noncommutative entropy.

\begin{lemma}[\cite{Kerr04}] \label{lemma: KerrLowerBound}
There exists a universal constant $a>0$ with the following property. Let $A$ be a unital $\C$-algebra and let $\alpha\in\End(A)$. Suppose that there exist $K>0$ and a finite subset $\Upsilon=\{\upsilon_1,\dots,\upsilon_m\}$ of the unit ball of $A$ such that for every $n\in\N$, the map $\ell_1^{mn}(\cc) \to A$ that sends the standard basis to $\Upsilon \cup \alpha\Upsilon \cup \dots \cup \alpha^{n-1}\Upsilon$ is a (necessarily contractive) linear isomorphism onto its image whose inverse has norm at most $K$ (in other words, $\Span \Upsilon \cup \alpha\Upsilon \cup \dots \cup \alpha^{n-1}\Upsilon$ is $K$-equivalent to $\ell_1^{mn}(\cc)$). Then, $\h(\alpha)\ge a^{-1}K^{-2}m$.
\end{lemma}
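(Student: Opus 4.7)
The approach is to bound the completely positive rank $\rcp(\cF_n,\eps)$ from below exponentially in $n$, where $\cF_n = \Upsilon \cup \alpha\Upsilon \cup \ldots \cup \alpha^{n-1}\Upsilon$, so that the defining formula $\h(\alpha,\Upsilon,\eps) = \limsup_n \tfrac{1}{n}\log \rcp(\cF_n,\eps)$ delivers the claimed $m/(aK^2)$ lower bound. The underlying mechanism is a finite-dimensional rigidity statement saying that the $\ell_1$-structure of $\Span(\cF_n)$ obstructs cpc factorization through low-rank finite-dimensional $\C$-algebras.

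Set $N = mn$ and write $E_n := \Span(\cF_n)$. By hypothesis, the linear map $T_n \colon \ell_1^N \to E_n$ sending the standard basis to $\cF_n$ (in any fixed order) satisfies $\|T_n\| \leq 1$ and $\|T_n^{-1}\| \leq K$. Given any $(\cF_n,\eps)$-completely positive system $(F,\psi,\varphi)$, linearity of $\psi$ and $\varphi$ upgrades the elementwise approximation to the bound
\[
\|\varphi\psi(x) - x\| \le \eps \sum_i |a_i| \le \eps K \|x\|_A
\]
for every $x = \sum_i a_i f_i \in E_n$ with $f_i \in \cF_n$. Thus, for $\eps = \delta_0/K$ with $\delta_0 > 0$ a sufficiently small universal constant to be fixed below, $\varphi\psi|_{E_n}$ is within $\delta_0$ of $\mathrm{id}_{E_n}$ in operator norm on the unit ball of $E_n$.

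The crux is the following quantitative obstruction, extracted from \cite[Remark 3.10]{Kerr04} (and in the same spirit as the computations of \cite[Section 2]{Kerr07}): there are universal constants $a_0 > 0$ and $\delta_0 > 0$ such that whenever an $N$-dimensional subspace $V$ of a $\C$-algebra is $K$-Banach--Mazur-equivalent to $\ell_1^N$ and admits a cpc factorization through a finite-dimensional $\C$-algebra $F$ whose composition is within $\delta_0$ of $\mathrm{id}_V$ on the unit ball, then $\rk(F) \ge e^{N/(a_0 K^2)}$. Applying this with $V = E_n$ and the system $(F,\psi,\varphi)$ above yields $\rcp(\cF_n,\eps) \ge e^{mn/(a_0 K^2)}$ for $\eps = \delta_0/K$, and hence
\[
\h(\alpha) \ge \h(\alpha,\Upsilon,\eps) \ge \limsup_n \frac{1}{n}\log e^{mn/(a_0 K^2)} = \frac{m}{a_0 K^2}.
\]
Setting $a = a_0$ completes the argument.

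The main obstacle is the rigidity estimate in the third paragraph, which is the genuinely nontrivial part. It is where local-Banach-space and operator-space techniques enter: the proof exploits exponential $\delta$-packings of the unit ball of $\ell_1^N$ (available, for instance, from good error-correcting codes) to show that any cpc image in a finite-dimensional $\C$-algebra would force too many approximately orthogonal elements unless the rank grows exponentially in $N/K^2$. Since this lemma is explicitly attributed to Kerr, my plan is to cite the estimate from \cite{Kerr04} rather than give an independent derivation.
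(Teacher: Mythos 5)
Your proposal is correct and follows essentially the same route as the paper, which simply observes (as you do for the key rigidity estimate) that the lemma is the content of \cite[Remark 3.10]{Kerr04}, itself a consequence of \cite[Lemma 3.1]{Kerr04}. Your additional unpacking of the reduction --- upgrading the elementwise $\eps$-approximation to a $\delta_0$-approximation on the unit ball of $\Span(\cF_n)$ via the $\ell_1$-equivalence, and then invoking Kerr's exponential lower bound $\rk(F)\ge e^{N/(a_0K^2)}$ --- is accurate and consistent with what the citation supplies.
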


\begin{proof}
As explained in \cite[Remark 3.10]{Kerr04}, this is a consequence of \cite[Lemma 3.1]{Kerr04}.
\end{proof}

\begin{prop}[\cite{KerrLi05}] \label{prop: TypeI}
If $A$ is a separable, unital $\C$-algebra such that $\h(\Ad_u)<\infty$ for every unitary $u\in A$, then $A$ is type $\mathrm{I}$.
\end{prop}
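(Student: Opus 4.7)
We argue by contraposition: assume $A$ is separable, unital, and not of type $\mathrm{I}$, and seek a unitary $u \in \cU(A)$ with $\h(\Ad_u) = \infty$. The entropy lower bound will come from Lemma~\ref{lemma: KerrLowerBound}: it suffices to exhibit, for every $m \in \N$, a finite set $\Upsilon_m \subseteq A$ of cardinality $m$ and a constant $K_m > 0$, all attached to the \emph{same} $u$, such that $\Span\bigl(\bigcup_{j=0}^{n-1}\Ad_u^j(\Upsilon_m)\bigr)$ is $K_m$-equivalent to $\ell_1^{mn}(\cc)$ uniformly in $n$; this gives $\h(\Ad_u) \ge a^{-1}K_m^{-2}m$, and hence $\h(\Ad_u) = \infty$ whenever $K_m^{-2}m \to \infty$ as $m \to \infty$.

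The structural ingredient is Glimm's dichotomy: a separable $\C$-algebra fails to be of type $\mathrm{I}$ precisely when it contains a subquotient isomorphic to the CAR algebra $M_{2^\infty} = \bigotimes_{n\in\N} M_2(\cc)$. Non-type-$\mathrm{I}$-ness of $A$ therefore supplies a $\C$-subalgebra $B \subseteq A$, a closed two-sided ideal $J \triangleleft B$, and an isomorphism $B/J \cong M_{2^\infty}$. For each $k \in \N$, projectivity of finite-dimensional $\C$-algebras allows us to lift the matrix units of $M_{2^k}(\cc) \subseteq M_{2^\infty}$ to elements of $A$ that approximately satisfy the matrix-unit relations, with orthogonality becoming arbitrarily sharp as $k$ grows.

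The unitary $u$ is then assembled as a shift-type element of $\cU(A)$ that, modulo $J$, implements a noncommutative Bernoulli shift on this CAR structure (analogous to the mechanism underlying Proposition~\ref{prop: Shift}, where shifts on $M_k^{\otimes \zz}$ have positive entropy $\log k$). Following the scheme of \cite[Proposition 8.9]{KerrLi05}, the sets $\Upsilon_m$ are chosen among the lifted matrix units so that the orbit $\bigcup_{j=0}^{n-1}\Ad_u^j(\Upsilon_m)$ consists of approximately disjoint matrix units inside a sufficiently large $M_{2^N}(\cc) \subseteq M_{2^\infty}$, with the required $\ell_1$-equivalence detected by dual pairing against trace-like functionals on $A$ lifted via $B/J$ from tracial states of $M_{2^\infty}$.

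The main obstacle is that the proposition demands a \emph{single} unitary $u$ realising infinite entropy, not merely a sequence $(u_m)$ with $\h(\Ad_{u_m}) \to \infty$ (the latter being immediate from the matricial structure alone). This is overcome by exploiting the layered self-similarity $M_{2^k}(\cc) \subseteq M_{2^{k+1}}(\cc) \subseteq M_{2^\infty}$ within the common subquotient $B/J$: one constructs $u$ as a limit of commuting partial shifts acting on successively larger blocks, so that a single implementation contributes simultaneously to the $\ell_1$-bound for every $m$. The delicate points—transferring the shift dynamics from $B/J$ back to $A$ despite the obstruction posed by $J$, controlling the constants $K_m$ uniformly in $m$, and preserving approximate matrix-unit relations throughout—are handled by standard perturbation and projectivity arguments afforded by the separability of $A$.
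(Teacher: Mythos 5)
Your overall architecture (contraposition, Glimm-type structure theory, a shift mechanism, and Lemma~\ref{lemma: KerrLowerBound} with constants uniform in $m$) is the right one, but two of your key steps have genuine gaps. First, the construction of the single unitary $u$. Glimm's dichotomy only gives a subquotient $B/J\cong \mathbb{M}_{2^\infty}$, and the shift automorphism of $\mathbb{M}_2^{\otimes\zz}\cong\mathbb{M}_{2^\infty}$ is \emph{outer}, so there is no unitary in $B/J$ implementing it; moreover a ``limit of commuting partial shifts acting on successively larger blocks'' is not norm-Cauchy and does not converge to a unitary. This is exactly the hard point, and the paper does not solve it by perturbation/projectivity: it imports the construction from the proof of \cite[Theorem 1.2]{Brown04}, where theorems of Glimm \emph{and Voiculescu} together produce a unital subalgebra $B\subseteq A$, a genuine unitary $u\in B$, and a surjection $\pi\colon B\to(\mathbb{M}_{2^\infty})^{\otimes\zz}$ (the two-sided tensor product) satisfying $\pi\circ\Ad_u=\alpha\circ\pi$ for the tensor shift $\alpha$. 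Voiculescu's absorption theorem is what allows the outer shift to be realised as $\Ad_u$ modulo $\ker\pi$; your sketch has no substitute for it.

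Second, the choice of $\Upsilon_m$. Approximately orthogonal matrix units cannot give the required $\ell_1$-equivalence with constants uniform in $m$ and $n$: for orthogonal projections $p_1,\dots,p_N$ one has $\|\sum c_ip_i\|=\max_i|c_i|$, so the span is isometrically $\ell_\infty^N$ and the inverse of the map from $\ell_1^N$ has norm $N$; Lemma~\ref{lemma: KerrLowerBound} then yields a bound of the form $a^{-1}N^{-2}\cdot N\to 0$ rather than $\infty$. Dual pairing against traces does not repair this, since no state can take values close to $\pm1$ on many pairwise orthogonal positive elements simultaneously. The uniform constant in the paper comes from a different source: $\alpha$ restricts to the commutative Bernoulli shift on the diagonal $C(X^\zz)$ with $X=\{0,1\}^{\{1,\dots,m\}}$, and \cite[Lemma 2.2]{Kerr07} supplies norm-one self-adjoint functions $f_1,\dots,f_m$ (realising all sign patterns at points of $X^\zz$, in the spirit of Rademacher functions) whose orbit spans are $2$-equivalent to $\ell_1^{mn}(\rr)$. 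One then lifts each $f_j$ to a norm-one $b_j\in B$ and uses only that $\pi$ is a contraction to transfer the lower $\ell_1$-estimate upstairs; no trace-like functionals on $A$ are needed. As written, your argument establishes neither the existence of the unitary nor the uniform $\ell_1$-bounds, so the proof does not go through.
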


\begin{proof}
This is the $(4)\Rightarrow(1)$ implication of \cite[Theorem 6.4]{KerrLi05}, whose contrapositive is proved using the construction of \cite{Brown04}. Suppose that $A$ is not type $\mathrm{I}$. As discussed in the proof of \cite[Theorem 1.2]{Brown04}, theorems of Glimm and Voiculescu provide a unital subalgebra $B\subseteq A$, a unitary $u\in B$ and a surjective $^*$-homomorphism $\pi \colon B \to (\mathbb{M}_{2^\infty})^{\otimes\zz}$ such that $\pi\circ\Ad_u=\alpha\circ\pi$, where $\alpha\in\Aut((\mathbb{M}_{2^\infty})^{\otimes\zz})$ is the tensor shift. Let $m\in\N$. As observed in the proof of Proposition~\ref{prop: Shift}, $\alpha$ restricts to the tensor shift on $\alpha\in\Aut(\mathbb{M}_{2^m}(\cc)^{\otimes\zz})$ and hence to the commutative Bernoulli shift on the diagonal subalgebra $C(X^\zz)$, where $X=\{0,1\}^{\{1,\dots,m\}}$. As in the proof of \cite[Lemma 2.2]{Kerr07}, there exists a finite subset $\Omega=\{f_1,\dots,f_m\}$ of norm-one, self-adjoint elements of $C(X^\zz)$ such that for every $n\in\N$, $\Span \Omega\cup\alpha(\Omega)\cup\dots\cup\alpha^{n-1}(\Omega)$ is $2$-equivalent to $\ell_1^{mn}(\rr)$, hence $4$-equivalent to $\ell_1^{mn}(\cc)$. For each $j\in\{1,\dots,m\}$, choose $b_j\in B$ with $\|b_j\|=1$ such that $\pi(b_j)=f_j$. Writing $\Upsilon=\{b_1,\dots,b_m\}$, we then have that $\Span \Upsilon\cup\Ad_u(\Upsilon)\cup\dots\cup\Ad_u^{n-1}(\Upsilon)$ is also $4$-equivalent to $\ell_1^{mn}(\cc)$. Since $m\in\N$ is arbitrary, it follows from Lemma~\ref{lemma: KerrLowerBound} that $\h(\Ad_u)=\infty$.
\end{proof}

\begin{theorem}[\cite{KerrLi05}] \label{thm: DenseInfEntropy}
If $A$ is a unital, $\Z$-stable $\C$-algebra, then $\h_\dr=\h_\nuc=\h=\infty$ on dense subsets of $\End(A)$ and $\Aut(A)$.
\end{theorem}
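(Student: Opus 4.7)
The plan is to adapt Kerr's argument from \cite[Proposition 8.9]{KerrLi05}. By Proposition~\ref{prop: IneqBrownVoicEntr}, it suffices to show that $\h = \infty$ on a dense subset of each of $\End(A)$ and $\Aut(A)$.

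First I would invoke Proposition~\ref{prop: TypeI} inside the Jiang--Su algebra: since $\Z$ is simple, unital, and infinite-dimensional, it is not of type $\mathrm{I}$, so there is a unitary $u_0 \in \Z$ with $\h(\Ad_{u_0}) = \infty$. Now given $\alpha \in \Aut(A)$, a finite set $\cF \subseteq A$ containing $1_A$, and $\eps > 0$, strong self-absorption of $\Z$ produces a $^*$-isomorphism $\Psi \colon A \to A \otimes \Z$ with $\|\Psi(b) - b \otimes 1_{\Z}\| < \delta$ for all $b \in \cF \cup \alpha(\cF) \cup \alpha^{-1}(\cF)$, where $\delta$ is chosen small depending on $\eps$. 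Setting
\[
\alpha' := \Psi^{-1} \circ (\alpha \otimes \Ad_{u_0}) \circ \Psi \in \Aut(A),
\]
a short triangle-inequality argument (using contractivity of $^*$-homomorphisms) shows $\|\alpha'(a) - \alpha(a)\| \le 2\delta$ and $\|(\alpha')^{-1}(a) - \alpha^{-1}(a)\| \le 2\delta$ for every $a \in \cF$, placing $\alpha'$ within $\eps$ of $\alpha$ once $\delta$ is sufficiently small. Conjugacy of $\alpha'$ to $\alpha \otimes \Ad_{u_0}$ via $\Psi$, together with the argument of Proposition~\ref{prop: Conjugacy}, yields $\h(\alpha') = \h(\alpha \otimes \Ad_{u_0})$.

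The decisive step is then the lower bound $\h(\alpha \otimes \Ad_{u_0}) = \infty$. Since $\alpha$ is unital, the unital $^*$-subalgebra $1_A \otimes \Z \cong \Z$ of $A \otimes \Z$ is invariant under $\alpha \otimes \Ad_{u_0}$, and the restriction coincides with $\Ad_{u_0}$; monotonicity of the Brown--Voiculescu entropy on invariant unital $^*$-subalgebras \cite[Proposition 2.1]{Brown99} therefore gives $\h(\alpha \otimes \Ad_{u_0}) \ge \h(\Ad_{u_0}) = \infty$. The same recipe applies verbatim to unital endomorphisms.

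For general $\alpha \in \End(A)$, the anticipated main obstacle is that when $\alpha(1_A) \ne 1_A$ the subalgebra $1_A \otimes \Z$ is no longer invariant under $\alpha \otimes \Ad_{u_0}$. My plan here is to bypass the monotonicity step via Lemma~\ref{lemma: KerrLowerBound}: the proof of Proposition~\ref{prop: TypeI} exhibits, for each $m \in \N$, a family $\Upsilon_0 \subseteq \Z$ of $m$ contractions whose $\Ad_{u_0}$-orbits span subspaces uniformly $K$-equivalent to $\ell_1^{mn}(\cc)$. The lifted orbits $(\alpha \otimes \Ad_{u_0})^j(1_A \otimes x) = \alpha^j(1_A) \otimes u_0^j x u_0^{-j}$ inherit the $\ell^1$-equivalence with the same constant after compressing by the projection $\alpha^{n-1}(1_A) \otimes 1_{\Z}$ (using that the projections $\alpha^j(1_A)$ are decreasing and dominate $\alpha^{n-1}(1_A)$) and invoking that the spatial tensor norm is cross-normed, provided $\alpha^{n-1}(1_A) \ne 0$. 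Letting $m \to \infty$ then forces $\h(\alpha \otimes \Ad_{u_0}) = \infty$, which completes density in $\End(A)$ at least away from the exceptional endomorphisms whose iterates eventually vanish.
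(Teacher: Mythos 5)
Your argument is, in its main line, exactly the paper's: take the unitary $u_0\in\Z$ with $\h(\Ad_{u_0})=\infty$ supplied by Proposition~\ref{prop: TypeI}, choose an isomorphism $\Psi\colon A\to A\otimes\Z$ approximately intertwining $x\mapsto x\otimes 1_\Z$ on $\cF\cup\alpha(\cF)\cup\alpha^{-1}(\cF)$, perturb $\alpha$ to $\Psi^{-1}\circ(\alpha\otimes\Ad_{u_0})\circ\Psi$, deduce infinite Brown--Voiculescu entropy from monotonicity over the invariant copy $1_A\otimes\Z$, and upgrade to $\h_\nuc$ and $\h_\dr$ via Proposition~\ref{prop: IneqBrownVoicEntr}. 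The only place you diverge is the non-unital endomorphism case. The paper invokes monotonicity for arbitrary endomorphisms without comment, which, as you correctly observe, presupposes that $1_A\otimes\Z$ is $(\alpha\otimes\Ad_{u_0})$-invariant, i.e.\ that $\alpha(1_A)=1_A$. Your substitute --- compressing the orbit elements $\alpha^j(1_A)\otimes u_0^j x u_0^{*j}$ by the projection $\alpha^{n-1}(1_A)\otimes 1_\Z$ (legitimate, since the $\alpha^j(1_A)$ are decreasing), recovering the $\ell_1^{mn}$-isomorphism with the same constant from the cross-norm identity $\|p\otimes y\|=\|y\|$, and feeding it into Lemma~\ref{lemma: KerrLowerBound} --- is sound and handles every endomorphism with $\alpha^n(1_A)\neq 0$ for all $n$. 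The residual exceptional case cannot be repaired by any argument: for instance the zero endomorphism is an isolated point of $\End(A)$ in the point-norm topology (any $\beta$ with $\|\beta(1_A)\|<1$ is zero, as $\beta(1_A)$ is a projection) and has $\h=0$, so it does not lie in the closure of the infinite-entropy set. The statement should therefore be read for unital endomorphisms (or automorphisms), where your proof and the paper's coincide; your extra care simply makes explicit a hypothesis the paper leaves tacit.
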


\begin{proof}
Fix $\eps>0$, a finite set $\cF\subseteq A$ and $\alpha\colon A\to A$ an endomorphism or an automorphism. Since $\Z$ is not type $\mathrm{I}$, Proposition~\ref{prop: TypeI} provides a unitary $u\in\Z$ with $\h(\Ad_u)=\infty$. As in \cite[Lemma 8.8]{KerrLi05}, there is an isomorphism $\Phi\colon A \to A\otimes\Z$ such that $\|\Phi(x)-x\otimes1_\Z\|<\eps/2$ for every $x\in\cF\cup\alpha(\cF)$ (or $x\in\cF\cup\alpha(\cF)\cup\alpha^{-1}(\cF)$ if $\alpha$ is an automorphism). Then, as in the proof of \cite[Proposition 8.9]{KerrLi05}, the map $\beta=\Phi^{-1}\circ(\alpha\otimes\Ad_u)\circ\Phi$ is an endomorphism (which is invertible if $\alpha$ is) with $\h(\beta)=\infty$ (by monotonicity; \cite[Proposition 2.1]{Brown99}) such that $\|\beta(x)-\alpha(x)\|<\eps$ (and also $\|\beta^{-1}(x)-\alpha^{-1}(x)\|<\eps$ if $\alpha$ is an automorphism) for every $x\in\cF$.
\end{proof}

The question of whether infinite entropy is a \emph{generic} property (that is, whether the set of endomorphisms or automorphisms with infinite entropy contains a dense $G_\delta$ set) is more difficult. Our next goal is to extend the work of \cite{Kerr07}, which shows that infinite entropy occurs generically in the automorphism group of a specially constructed AF-algebra. Using the tools provided by modern classification, we show that this result can be extended to the kinds of $\C$-algebras studied in \cite{Jacelon22}, which can be thought of as noncommutative spaces of observables of topological manifolds.

In the following theorem and its proof, $\Aff(T(A))$ denotes the set of continuous affine maps $T(A)\to\rr$, and $\hat{}$ denotes the evaluation map $A_{sa}\to\Aff(T(A))$ (that is, $\hat{a}(\tau)=\tau(a)$ for every self-adjoint element $a\in A$ and trace $\tau\in T(A)$). The proof appeals to Yano's theorem \cite[Corollary 1.1]{Yano80}, which establishes infinite entropy as a generic property among homeomorphisms of a compact topological manifold $X$, provided that the dimension of $X$ is greater than one. We note that this dimensional restriction is necessary, as the topological entropy of any homeomorphism $f$ of a one-dimensional compact manifold $X$ is zero. Since in this case $f$ is described up to permutation of connected components by self-homeomorphisms of intervals and circles, this fact follows from \cite[Theorem 7.14 and Corollary 7.14.1]{Walters82}. On the other hand, the genericity result for \emph{noninvertible} maps $f\colon X\to X$ (namely, \cite[Corollary 2.1]{Yano80}) holds in all dimensions.

\begin{theorem}\label{thm: GenericInfEntropy}
Let $A$ be a simple, separable, unital $\C$-algebra that has finite nuclear dimension and satisfies the UCT. Suppose also that the extreme boundary of $T(A)$ is homeomorphic to a compact topological manifold (with or without boundary), and that $(K_0(A),K_0(A)_+,[1_A])$ admits a unique state (equivalently, $\hat{p}\in\Aff(T(A))$ is constant for every projection $p$ over $A$). Then, $\h_\dr=\h_\nuc=\h=\infty$ on a dense $G_\delta$ subset of the set of unital endomorphisms $\alpha$ of $A$ for which $T(\alpha)$ preserves $\partial_e T(A)$. If in addition the dimension of $\partial_e T(A)$ is at least two, then $\h_\dr=\h_\nuc=\h=\infty$ on a dense $G_\delta$ subset of $\Aut(A)$.
\end{theorem}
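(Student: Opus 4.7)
The plan is to combine the three ingredients identified in the introduction: Jacelon's lifting mechanism \cite[Theorem 3.4]{Jacelon22} for generic properties of tracial dynamics on $M:=\partial_e T(A)$; Yano's genericity theorem \cite{Yano80} for infinite topological entropy of self-maps of compact topological manifolds; and Kerr's $\ell_1$-type lower bound for $\h$ (Lemma~\ref{lemma: KerrLowerBound}). Since Proposition~\ref{prop: IneqBrownVoicEntr} yields $\h\le\h_\nuc\le\h_\dr$ on the finite-nuclear-dimension algebra $A$, it suffices to produce the claimed dense $G_\delta$ subsets on which $\h(\alpha)=\infty$.

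First I would set up the continuous map $\pi\colon\alpha\mapsto T(\alpha)|_M$ from unital endomorphisms of $A$ preserving $\partial_e T(A)$ tracially into $C(M,M)$, and its restriction $\pi\colon\Aut(A)\to\home(M)$. Under the standing hypotheses (finite nuclear dimension, UCT, unique state on $K_0$, and Bauer simplex $T(A)$ with manifold boundary), \cite[Theorem 3.4]{Jacelon22} ensures that $\pi$ has a Baire-preserving surjectivity property: the $\pi$-preimage of any dense $G_\delta$ subset of $C(M,M)$ (respectively of $\home(M)$) remains dense $G_\delta$ in the source. Yano's theorem then provides such dense $G_\delta$ sets: one in $C(M,M)$ consisting of maps with $h_\tp=\infty$, and (when $\dim M\ge 2$) one in $\home(M)$ with the same property.

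The core step is to show that $h_\tp(T(\alpha)|_M)=\infty$ forces $\h(\alpha)=\infty$. Write $T=T(\alpha)|_M$. For each $m\in\N$, the classical construction underlying the proof of \cite[Theorem 4.2]{Kerr07} converts infinite topological entropy of $T$ into a finite set $\{f_1,\dots,f_m\}\subseteq C(M)_{sa}$ of norm-one functions whose orbit spans $\Span\{f_j\circ T^i : 1\le j\le m,\,0\le i<n\}$ are $K$-equivalent to $\ell_1^{mn}(\cc)$ for some $K$ independent of $n$. Using the identification $\Aff(T(A))\cong C(M)$ available for Bauer simplices together with a norm-controlled self-adjoint section of $a\mapsto\hat a|_M$ (available in the classifiable setting and exploited in \cite{Jacelon22}), I would lift each $f_j$ to $a_j\in A_{sa}$ with $\|a_j\|\le1$ and $\hat a_j|_M=f_j$. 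Since $\widehat{\alpha^i(a_j)}|_M=f_j\circ T^i$ and $\|\widehat{\,\cdot\,}\|_{C(M)}\le\|\cdot\|_A$, the $\ell_1$ lower bound transfers:
\[
\Bigl\|\sum_{i,j}c_{j,i}\,\alpha^i(a_j)\Bigr\|_A \;\ge\; \Bigl\|\sum_{i,j}c_{j,i}\,f_j\circ T^i\Bigr\|_{C(M)} \;\ge\; \frac{1}{K}\sum_{i,j}|c_{j,i}|,
\]
while $\|\sum c_{j,i}\alpha^i(a_j)\|_A\le\sum|c_{j,i}|$ holds trivially. Lemma~\ref{lemma: KerrLowerBound} applied to $\Upsilon=\{a_1,\dots,a_m\}$ yields $\h(\alpha)\ge a^{-1}K^{-2}m$, and letting $m\to\infty$ gives $\h(\alpha)=\infty$.

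The main obstacle, and the step requiring the most care, is establishing the norm-controlled self-adjoint lift $C(M)_{sa,[-1,1]}\to A_{sa,[-1,1]}$: while surjectivity of $A_{sa}\to\Aff(T(A))$ is standard in the classifiable regime, preserving the $\ell_1$ constant up to a universal multiplicative factor (which is all Lemma~\ref{lemma: KerrLowerBound} needs) is precisely the bridge between the tracial dynamics on $M$ and the noncommutative entropy on $A$. Everything else is bookkeeping: verifying that the Jacelon-type preimage argument applies uniformly in both the endomorphism and automorphism settings (which is where $\dim M\ge2$ enters, via the need to invoke Yano's result for homeomorphisms rather than merely continuous maps), and confirming that the Kerr-style $\ell_1$-orbits extracted from infinite-entropy subsystems indeed survive the lift with a uniform constant.
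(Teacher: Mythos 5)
Your overall strategy --- Yano's generic sets, Kerr's $\ell_1$ lower bound, and the lifting of tracial dynamics via \cite[Theorem 3.4]{Jacelon22} --- is exactly the route the paper takes, as is the reduction to showing $\h=\infty$ via Proposition~\ref{prop: IneqBrownVoicEntr}. However, your ``core step'' as stated contains a genuine gap: you claim that $h_\tp(T)=\infty$ alone yields, for every $m$, norm-one functions $f_1,\dots,f_m\in C(M)_{sa}$ whose full orbit spans $\Span\{f_j\circ T^i\}$ are $K$-equivalent to $\ell_1^{mn}$ with $K$ independent of $n$ and $m$. Kerr's construction (\cite[Lemma 2.2]{Kerr07}) does not take infinite entropy as input; it requires a closed invariant subset factoring onto a full shift, and infinite topological entropy does not supply this in general (a minimal system of infinite entropy has no full-shift factors, and independence-density methods only give $\ell_1$-equivalence along a positive-density subset of times, not over the full orbit with a uniform constant). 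The paper sidesteps this by exploiting the specific structure of Yano's dense open sets $\cH_n$: each $f\in\cH_n$ has a power conjugate to a pseudo-horseshoe, hence admits a closed invariant subset factoring onto the full shift on $n^k$ symbols, and it is this structure --- not the value of $h_\tp(f)$ --- that feeds into Kerr's lemma. Since you are in any case intersecting over Yano's sets, your argument is repaired by replacing ``$h_\tp(T)=\infty$'' with ``$T\in\cH_{2^q}$ for every $q$'' and invoking the horseshoe structure directly.

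The second point you flag, the norm-controlled lift, is indeed delicate and is resolved differently in the paper: rather than an exact section $f_j\mapsto a_j$ with $\hat{a}_j|_M=f_j$ and $\|a_j\|\le 1$ (which a quotient map of order-unit spaces need not provide at the boundary of the unit ball), the paper first cuts $f_j$ down to norm $15/16$, applies \cite[Proposition 2.1]{CGSTW23} to obtain an exact self-adjoint preimage, and renormalises, producing $c_j$ with $\|\hat{c}_j|_M-f_j\|<1/8$. The error $1/8$ is then absorbed into the $\ell_1$ constant (passing from $2$- to $4$-equivalence over $\rr$, hence $8$-equivalence over $\cc$), exactly as in \cite[Lemma 2.3]{Kerr07}; your transfer inequality survives in this approximate form because $1/K-1/8>0$ for $K=2$. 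With these two adjustments your proof coincides with the paper's.
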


\begin{proof}
Let us first consider the case of automorphisms. By \cite[Theorem 1]{Yano80}, there is a dense $G_\delta$ subset $\cV$ of infinite-entropy homeomorphisms of $X:=\partial_e T(A)$, constructed as the intersection $\cV=\bigcap_{n=1}^\infty\cH_n$ of dense open sets $\cH_n \subseteq \home(X)$. Elements $f\in\cH_n$ have the property that for some $k=k(f,n)\in\N$, $f^k$ is conjugate to a `pseudo horseshoe of type $n^k$'. The precise meaning of this is not as important as the observation made in the proof of \cite[Proposition 2]{Yano80}, namely that for every $f\in\cH_n$ there is a closed invariant subset $Y\subseteq X$ that factors onto the full shift on an alphabet of $n^k$ symbols.

We claim that elements of $T^{-1}(\cV)=\{\alpha \in \Aut(A) \mid T(\alpha)\in\cV\}$ have infinite Brown--Voiculescu entropy. To prove this, we will show that for an automorphism $\alpha$ with $f:=T(\alpha)\in\cV$, and for every $q\in\N$, $\h(\alpha)$ is bounded below by $ckq$ for some universal constant $c$. Since $q$ is arbitrary, this will imply that $\h(\alpha)=\infty$.

We proceed as in \cite{Kerr07}. Since $f=T(\alpha)\in\cH_{2^q}$, it follows from \cite[Lemma 2.2]{Kerr07} that there is a set $\Omega$ of norm-one, self-adjoint elements of $C(X)$ with $|\Omega|=2^{kq}$ (where $k=k(f,2^q)$) such that for every $n\in\N$, $\Span \Omega \cup f^*\Omega \cup \dots \cup (f^*)^{n-1}\Omega$ is $2$-equivalent to $\ell_1^{nkq}(\rr)$ (in the sense of Lemma~\ref{lemma: KerrLowerBound}, and where $f^*\colon C(X)\to C(X)$ is the induced endomorphism $g\mapsto g\circ f$). By \cite[Proposition 2.1]{CGSTW23}, we can find for each $g\in\Omega$ a self-adjoint, norm-one element $c_g\in A$ such that $\|\hat{c_g}|_{\partial_e T(A)}-g\| < 1/8$. (First approximate $g$ within $1/16$ by a cut-down $g'$ of norm $15/16$, then apply \cite[Proposition 2.1(i)]{CGSTW23} to $g'$ with $\eps=1/16$ to obtain a self-adjoint contraction $b_g\in A$ with $\hat{b_g}=g'$. Then, $c_g=b_g/\|b_g\|$ does the job.) With $\Upsilon:=\{c_g \mid g\in\Omega\}$, it follows exactly as in the proof of \cite[Lemma 2.3]{Kerr07} that $\Span \Upsilon \cup \alpha\Upsilon \cup \dots \cup \alpha^{n-1}\Upsilon$ is $4$-equivalent to $\ell_1^{nkq}(\rr)$, hence $8$-equivalent to $\ell_1^{nkq}(\cc)$. (In \cite{Kerr07}, the elements $c_g$ are projections, but the inequalities that establish \cite[Lemma 2.3]{Kerr07} are valid more generally for contractions.) Lemma~\ref{lemma: KerrLowerBound} then gives $\h(\alpha)\ge a^{-1}8^{-2}kq$, proving the claim.

Finally, \cite[Theorem 3.4]{Jacelon22} (together with the observation \cite[Remark 3.6]{Jacelon22} that the framework of \cite{CGSTW23} can be used to remove the hypothesis of torsion-free $K_1$) implies that $T^{-1}(\cV)$ is a dense $G_\delta$ subset of $\Aut(A)$. Since \cite[Theorem 3.4]{Jacelon22} also applies to $\partial_e T(A)$-preserving endomorphisms of $A$, and \cite[Theorem 2]{Yano80} and the proof of \cite[Proposition 2']{Yano80} apply to the space of continuous maps $X\to X$, the argument in the non-invertible setting is exactly the same. 
\end{proof}

\section{Open questions} \label{sect: Questions}
\renewcommand{\labelenumi}{(\arabic{enumi})}

We will end with a collection of open questions.

\begin{enumerate}
   \item Is there a $\C$-algebra $A$ with finite decomposition rank and an endomorphism $\alpha$ of $A$ for which either or both of the inequalities $\h(\alpha)\leq\h_\nuc(\alpha)\leq\h_\dr(\alpha)$ is strict?

   \item The definition of $\h_\nuc(\alpha)$ also makes sense for cpc maps $\alpha\colon B\to B$ that need not be $^*$-homomorphisms. If $A$ is an irreducible, non-permutation $\{0,1\}$-valued matrix and $\theta_A$ is the canonical cpc endomorphism of the Cuntz--Krieger algebra $\cO_A$ discussed in Remark~\ref{rmk: CuntzKrieger}, then is $\h_\nuc(\theta_A)$ equal to the logarithm of the spectral radius of $A$?
   
    \item In \cite[Theorem 3.5]{Brown99}, Brown showed the following result. If $\alpha\colon G \to \Aut(A)$ is an action of a discrete abelian group $G$ on a unital, exact $\C$-algebra $A$, and $\lambda_g \in A\rtimes_\alpha G$ denotes the unitary implementing the automorphism $\alpha_g\in \Aut(A)$, then \[\h_A(\alpha_g) = \h_{A\rtimes_\alpha G}(\Ad(\lambda_g)).\] Introduced in \cite{HirshbergWinterZacharias} and extended for example in \cite{SzaboWuZacharias,Szabo19}, finite Rokhlin dimension is a regularity property for group actions on $\C$-algebras that in many cases entails $\Z$-stability or finite nuclear dimension of the crossed product $A\rtimes_\alpha G$ if these properties hold for $A$. Does the equality above hold for $\h_\nuc$ (or $\h_\dr$) if one further assumes that $A$ has finite nuclear dimension (or finite decomposition rank) and that, in the sense of \cite{Szabo19}, the action $\alpha$ has finite Rokhlin dimension with commuting towers?\footnote{One can ask the same question if $\alpha$ has finite Rokhlin dimension (without commuting towers) if $G$ is a group for which this is defined.}
    \item Can Theorem~\ref{thm: VariationalQDTraces} be extended to arbitrary trace spaces? Moreover, using a version for amenable entropy instead (see Remark~\ref{rmk: AmenRankTau}), is there a variational principle for the coloured entropy $(\h_\nuc$)?
\end{enumerate}

\bibliographystyle{abbrv}
\bibliography{entropy}
\end{document}